\numberwithin{equation}{section}
\newtheorem{theorem}{Theorem}[section]
\newtheorem{corollary}[theorem]{Corollary}
\newtheorem{lemma}[theorem]{Lemma}
\newtheorem{prop}[theorem]{Proposition}
\theoremstyle{definition}
\newtheorem{remark}[theorem]{Remark}
\theoremstyle{definition}
\theoremstyle{definition}
\def\dashint{\operatorname%
{\,\,\text{\bf-}\kern-.98em\DOTSI\intop\ilimits@\!\!}}
\def\\det{\text{det}}
\def\.5{\frac{1}{2}}
\newcommand{\va}{\ensuremath{\varepsilon}}
\newcommand{\ptl}{\ensuremath{\partial}}
\newcommand{\om}{\ensuremath{\Omega}}
\newcommand{\R}{\ensuremath{\mathbb{R}}}
\newcommand{\bt}{\ensuremath{\beta}}
\newcommand{\lam}{\ensuremath{\lambda}}
\newcommand{\alp}{\ensuremath{\alpha}}
\newcommand{\dt}{\ensuremath{\delta}}
\newcommand{\RN}[1]{%
  \textup{\uppercase\expandafter{\romannumeral#1}}%
}
\newcommand{\Div}{\operatorname{div}}
\newcommand{\dist}{\text{dist}}
\renewcommand{\epsilon}{\varepsilon}
\newcounter{marnote}
\begin{document}


\title[Gradient Estimates for Perfect Conductivity Problem ]{The Optimal Gradient Estimates for Perfect Conductivity Problem with $C^{1, \alp}$ inclusions}

\author[Y. Chen]{Yu Chen}
\address[Y. Chen]{School of Mathematical Sciences, Beijing Normal University, Laboratory of Mathematics and Complex Systems, Ministry of Education, Beijing 100875, China. }
\email{chenyu@amss.ac.cn.}

\author[H.G. Li]{Haigang Li}
\address[H.G. Li]{School of Mathematical Sciences, Beijing Normal University, Laboratory of Mathematics and Complex Systems, Ministry of Education, Beijing 100875, China. }
\email{hgli@bnu.edu.cn. }

\author[L.J. Xu]{Longjuan Xu}
\address[L.J. Xu]{School of Mathematical Sciences, Beijing Normal University, Laboratory of Mathematics and Complex Systems, Ministry of Education, Beijing 100875, China.}
\email{ljxu@mail.bnu.edu.cn. }

\maketitle

\begin{abstract}

In high-contrast composite materials, the electric field concentration is a common phenomenon when two inclusions are close to touch. It is important from an engineering point of view to study the dependence of the electric field on the distance between two adjacent inclusions. In this paper, we derive the upper and lower bounds of the gradient of solutions to the conductivity problem where two perfectly conducting inclusions are located very close to each other. To be specific, we extend the known results of Bao-Li-Yin (ARMA 2009) in two folds: First, we weaken the smoothness of the inclusions from $C^{2, \alp}$ to $C^{1, \alp}$. To obtain an pointwise upper bound of the gradient, we follow an iteration technique developed by Bao-Li-Li (ARMA 2015), who mainly deal with the system of linear elasticity. However, when the inclusions are of $C^{1, \alp}$, we can not use $W^{2,p}$ estimates for elliptic equations any more. In order to overcome this new difficulty, we take advantage of De Giorgi-Nash estimates and Campanato's approach to apply an adapted version of the iteration technique with respect to the energy. A lower bound in the shortest line between two inclusions is also obtained to show the optimality of the blow-up rate. Second, when two inclusions are only convex but not strictly convex, we prove that blow-up does not occur any more.  Moreover, the establishment of the relationship between the blow-up rate of the gradient and the order of the convexity of the inclusions reveals the mechanism of such concentration phenomenon. 

\vspace{.5cm}



\noindent {\bf Keywords:} \, Perfect conductivity problem, 
Gradient estimates, Blow-up rates.

\end{abstract}

\section{Introduction}

\subsection{Background}

Let $D$ be a bounded open set in $\mathbb R^{n}$, $n\geq2$,  $D_1$ and $D_2$ be its two adjacent subdomains with $\va$-apart. The perfect conductivity problem is modeled as follows:
\begin{equation}\label{equ-infty-interior}
\begin{cases}
\Delta{u}=0& \mbox{in}~D\setminus\overline{D_{1}\cup D_{2} },\\
u=C_{i} & \mbox{on}~\ptl D_i, ~i=1,2, \\
\int_{\partial{D}_{i}}\frac{\partial{u}}{\partial\nu}\big|_{+}=0& i=1, \, 2,\\
u=\varphi&\mbox{on}~\partial{D},
\end{cases}
\end{equation}
where $\varphi \in C^{1, \alp}(\ptl D)$, $\alp \in (0,\, 1)$ and
$$\frac{\partial{u}}{\partial\nu}\bigg|_{+}:=\lim_{\tau\rightarrow0}\frac{u(x+\nu\tau)-u(x)}{\tau}.$$
Here and throughout this paper $\nu$ is the outward unit normal to the domain and the subscript $\pm$ indicates the limit from outside and inside the domain, respectively. In the second line of \eqref{equ-infty-interior}, the constants $C_{1}$ and $C_{2}$ represent free constant boundary conditions. The third line of \eqref{equ-infty-interior} means that there is no flux through the boundaries of inclusions. By a variational argument, there exists a unique pair of $C_{1}$ and $C_{2}$ such that \eqref{equ-infty-interior} has a solution $u\in H^{1}(\Omega)$, see e.g. \cite{bly1}. Namely, it can be described as the unique function which has the least energy in appropriate functional space, that is, 
$$E[u]=\min_{v\in\mathcal{A}}E[v], ~\mbox{where}~\mathcal{A}:=\{v\in H^{1}(\Omega)~|~\nabla v=0~\mbox{in}~{D}_{1}\cup D_2,~v|_{\partial\Omega}=\varphi\}.$$

A simple, two dimensional example, which very well illustrates the main feature of our estimates, would have the domain $D\subset\mathbb{R}^{2}$ model the cross-section of a fiber-reinforced composite, $D_1$ and $D_2$ represent the cross-sections of the stiff fibers, and the remaining subdomain represents the matrix medium. The gradient of the potential $u$ represents  the electrical field in the conductivity problem and the stress in anti-plane elasticity. From the second and third lines of \eqref{equ-infty-interior}, there are constant $C_{1}$ and $C_{2}$ such that $u=C_{i}$ on $\partial{D}_{i}$, $i=1,2$, which are free boundary conditions. This model can also be used to describe many other engineering and physical problems. Since they are mathematically identical henceforth we here use the conductivity terminology.
  
It is well known that the high concentration phenomenon of extreme electric field or mechanical loads in the extreme loads will be amplified by the composite microstructure, for example, the narrow region between two adjacent inclusions. Therefore, an optimal shape of the inclusions is the aim that an engineer pursues to design a more effective composite. Therefore, there have been many important works on the gradient estimates for strictly convex inclusions, especially for circular inclusions (that is, $2$-convex inclusions). For two adjacent disks with $\varepsilon$ apart, Keller \cite{k1} was the first to compute the effective electrical conductivity for a composite containing a dense array of perfectly conducting spheres of cylinders. In \cite{basl}, Babu\v{s}ka et al. numerically analyzed the initiation and growth of damage in composite materials, in which the inclusions are frequently spaced very closely and even touching. Bonnetier and Vogelius \cite{bv} and Li and Vogelius \cite{lv} proved the uniform boundedness of $|\nabla{u}|$ regardless of $\varepsilon$ provided that the conductivities stay away from $0$ and $\infty$. Li and Nirenberg \cite{ln} extended the results in \cite{lv} to general divergence form second order elliptic systems including systems of elasticity. 

For the perfect conductivity problem, the gradient's blow-up feature has attracted much attention in recent years due to its various applications. Much effort has been devoted to understanding of this blow-up mechanics. Ammari, Kang, and Lim \cite{akl} were the first to study the case of the close-to-touching regime of two circular particles whose conductivities degenerate to $\infty$ or $0$, a lower bound on $|\nabla u|$ was constructed there showing it blows up in both the perfectly conducting and insulating cases. This blow-up was proved to be of order $\varepsilon^{-1/2}$ in $\R^2$. In their subsequent work with H. Lee and J. Lee \cite{aklll}, they established upper and lower bounds to show the blow-up rate $\va^{-1/2}$ is optimal in $\R^2$. Subsequently, it has been proved by many mathematicians that for the perfect conducting case the generic blow-up rate of $|\nabla{u}|$ is $\varepsilon^{-1/2}$ in dimension two, $|\varepsilon\ln\varepsilon|^{-1}$ in dimension three, and $\varepsilon^{-1}$ in dimensions equal and greater than four. See Bao, Li and Yin \cite{bly1,bly2}, as well as Lim and Yun \cite{ly,ly2}, Yun \cite{y1,y2,y3}. The corresponding boundary estimates see \cite{aklll} and \cite{LX}. For the Lam\'{e} system with partially infinitely coefficients, see \cite{bll, bll2,bjl,ky}.

Recently, the characterizations of the singular behavior of $\nabla u$ for the perfect case was further developed in \cite{ackly,akllz,bt1,bt2,kly,jlx,kly2,ll-p,ky2}. The stress blow-up in the hole case has been characterized by an explicit function in Lim and Yu \cite{lyu}. The $C^1$, $C^2$ estimates for the elliptic equations with coefficients having Dini mean oscillation condition was established in \cite{dk,dek}.  For more related work on elliptic equations and systems from composites, see \cite{adkl,bc,dong,dongli,dongzhang,kly0,llby,m} and the references therein.

In this paper, we mainly prove that in perfect conductivity problem the blow-up rates of the electric field, $|\nabla u|$, are totally determined by the geometry of the inclusions. This geometry quantity is the order of the convexity of the inclusions, we refer to it as $m$-convexity. For example, the circular inclusions are $2$-convex, as mentioned before. For all the known results for perfect conductivity problem, the $C^{2,\alpha}$ smoothness of the inclusions is assumed, which means that $m=2+\alpha$, see \cite{bly1,akl} for instance. However, from the classical regularity theory of elliptic partial differential equations, it suffice to assume that the domain is $C^{1, \alp}$, to establish the gradient estimates of the solutions. Therefore, the first contribution of this present paper is to deal with the cases that $1<m<2$. Although from the regularity theory of partial differential equation, the $C^{1,\, \alp}$ smoothness is sufficient to obtain $L^{\infty}$ estimate of the gradient, more new difficulty is encountered to apply the iteration argument developed in \cite{bll,bll2} to establish a pointwise gradient estimate. The reason is that at this moment, the constructed auxiliary function is not smooth enough to employ the $W^{2, \, p}$-estimates as in the case of $C^{2, \alp}$ inclusions (see Proposition \ref{prop1}). Here we make use of more delicate analysis technique, such as De Giorgi-Nash estimates and Campanato's approach, to adapt the iteration technique to make up this gap. On the other hand, as mentioned above, in most of known results the strict convexity of the inclusions are assumed. However, when the inclusions are only convex but not strictly convex (see Figure \ref{fig1}) in the perfect conductivity problem, we find that blow-up does not occur any more, which corresponds the case that $m= \infty$. We prove that $|\nabla u|$ is uniformly bounded with respect to $\varepsilon$ whenever the area of the flat region is bigger than zero (Theorem \ref{thm1}) and show the explicit effect of the flatness of the inclusions. The rest cases, $2\leq m <\infty$, are also considered in such frame, see Theorem \ref{rem-m-geq2} below. Thus, we study the full range of $m$, $1<m \leq \infty$ are systematic studied in this paper.

In what follows, we state our main results in two folds, presented in subsection 1.2 and 1.3, respectively.

\subsection{ $C^{1, \alp}$ inclusions, when  $1< m <2$.}

We first fix our domain and notations. Let $D_{1}^{0}$ and $D_{2}^{0}$ be a pair of (touching at the origin) subdomains of $D$, a bounded open set in $\R^n$, $n\geq 2$,  far away from $\partial D$ and satisfy
$$D_{1}^{0}\subset\{(x',x_{n})\in\mathbb R^{n}~|~ x_{n}>0\},\quad D_{2}^{0}\subset\{(x',x_{n})\in\mathbb R^{n}~|~ x_{n}<0\}.$$
We use superscripts prime to denote the ($n-1$)-dimensional variables and domains, such as $x'$, $B'$ and $\Sigma '$. We assume that $\ptl D_1$ and $\ptl D_2$ are all of $C^{1, \alp}$, $0<\alp<1$. Translate $D_{i}^{0}$ ($i=1,\, 2$) by $\pm\frac{\va}{2}$ along $x_{n}$-axis as follows
\begin{equation}\label{def-D1D2-1}
D_{1}^{\va}:=D_{1}^{0}+(0',\frac{\va}{2})\quad \text{and}\quad D_{2}^{\va}:=D_{2}^{0}+(0',-\frac{\va}{2}).
\end{equation}
For simplicity, we drop the superscript $\va$ and denote
\begin{equation}\label{def-D1D2-2}
D_{i}:=D_{i}^{\va}\, (i=1, \, 2), \quad  \Omega:=D\setminus\overline{D_1\cup D_2},
\end{equation}
and $P_1:= (0',\frac{\va}{2})$, $P_2:=(0',-\frac{\va}{2}) $ be the two nearest points between $\ptl D_1$ and $\ptl D_2$ such that
\begin{equation*}
\text{dist}(P_1, P_2)=\text{dist}(\ptl D_1, \ptl D_2)=\va.
\end{equation*}
We further assume that there exists a constant $R_1$, independent of $\va$, such that the top and bottom boundaries of the narrow region between $\ptl D_1$  and $\ptl D_2$ can be represented, respectively, 
by graphs 
\begin{equation}\label{h1h2'}
x_n=\frac{\va}{2}+h_1(x^\prime)\quad\text{and}\quad x_n=-\frac{\va}{2}+h_2(x^\prime),\quad \text{for}~ |x^\prime|\leq 2R_1,
\end{equation}
where $h_1$, $h_2\in C^{1,\alp}(B'_{2R_{1}}(0^\prime))$ and satisfy 
\begin{equation}\label{h1-h2}
-\frac{\va}{2}+h_{2}(x') <\frac{\varepsilon}{2}+h_{1}(x'),\quad\mbox{for}~~ |x^\prime|\leq 2R_1;
\end{equation}
\begin{equation}\label{h1h1}
h_{1}(0')=h_2(0')=0,\quad\nabla_{x'}h_{1}(0')=\nabla_{x'}h_2(0')=0;
\end{equation}
\begin{equation}\label{h1h_convex2}
\kappa_{0}|x'|^{\alpha}\leq|\nabla_{x'}h_{1}(x')|,|\nabla_{x'}h_2(x')|\leq\,\kappa_{1}|x'|^{\alp},\quad\mbox{for}~~|x'|<2R_{1},
\end{equation}
and 
\begin{equation}\label{h1h3-1}
\|h_{1}\|_{C^{1,\alpha}(B'_{R_{1}})}+\|h_{2}\|_{C^{1,\alpha}(B'_{R_{1}})}\leq{\kappa},
\end{equation}
where the positive constants $\kappa_{0}<\kappa_{1}<\kappa$. Set
\[\Omega_r:=\left\{(x',x_{n})\in \Omega~\big|~ -\frac{\va}{2}+h_2(x')<x_{n}<\frac{\va}{2}+h_1(x'),~|x'|<r\right\}.\]

\begin{theorem}\label{thm-interior}
Let $D_1$, $D_2 \subset D\subset \R^n$ ( $n \geq 2$ ) be two bounded $C^{1,\alp}$ subdomains with  $\va$ apart. Suppose \eqref{h1h2'}--\eqref{h1h3-1} hold. Let $u\in H^1(D)\cap C^1(\overline{\Omega})$ be the solution to \eqref{equ-infty-interior} with $\varphi \in C^{1, \alp}(\ptl D)$.  Then for small $\va >0$, we have
\begin{equation}\label{upper-bounds}
|\nabla u(x',x_{n})| \leq \frac{C\rho_{n,\,\alp}(\va)}{\va+|x'|^{1+\alp}}\cdot\|\varphi\|_{C^{1, \, \alp}(\ptl D)}, \quad \text{for} ~ (x',x_{n})\in \Omega_{R_{1}},
\end{equation}
and
$$\|\nabla u\|_{L^{\infty}(\Omega\setminus\Omega_{R_{1}})}\leq\,C\|\varphi\|_{C^{1, \, \alp}(\ptl D)},$$
where $C$ is a positive constant, independent of $\va$, and 
\begin{equation}\label{def-rho-n-m}
\rho_{n,\,\alp}(\va)=
\begin{cases}
\va^{\frac{\alp}{1+\alp}}& \quad n=2,\\
1& \quad n\geq 3.
\end{cases}
\end{equation}
\end{theorem}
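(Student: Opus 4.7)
The plan is to follow the decomposition-plus-iteration strategy of Bao--Li--Li \cite{bll,bll2}, replacing the $W^{2,p}$ theory (unavailable for $C^{1,\alpha}$ inclusions) by an energy iteration fed into De Giorgi--Nash and Campanato's Morrey characterisation. First I would decompose
\[
u = C_1 v_1 + C_2 v_2 + v_0 \quad\text{in }\Omega,
\]
where for $i=1,2$ the function $v_i$ is harmonic in $\Omega$ with $v_i=\delta_{ij}$ on $\partial D_j$ and $v_i=0$ on $\partial D$, and $v_0$ is harmonic in $\Omega$ with $v_0=0$ on $\partial D_1\cup\partial D_2$ and $v_0=\varphi$ on $\partial D$. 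The maximum principle gives $|C_i|\le C\|\varphi\|_{L^\infty(\partial D)}$, while standard boundary Schauder estimates (since $v_0$ vanishes on the inclusions) yield $\|\nabla v_0\|_{L^\infty(\Omega)}\le C\|\varphi\|_{C^{1,\alpha}(\partial D)}$, uniformly in $\varepsilon$. Using the algebraic identity
\[
\nabla u = (C_1-C_2)\nabla v_1 + C_2\,\nabla(v_1+v_2) + \nabla v_0
\]
and the fact that $v_1+v_2$ has $\varepsilon$-independent boundary data and hence uniformly bounded gradient, the proof of \eqref{upper-bounds} reduces to (a) the pointwise bound $|\nabla v_1(x)|\le C(\varepsilon+|x'|^{1+\alpha})^{-1}$ in $\Omega_{R_1}$, and (b) the constant-difference bound $|C_1-C_2|\le C\rho_{n,\alpha}(\varepsilon)\|\varphi\|_{C^{1,\alpha}(\partial D)}$.

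For (a), in the neck $\Omega_{R_1}$ I would compare $v_1$ to the natural linear interpolation
\[
\bar v(x',x_n):=\eta(x')\,\frac{x_n+\varepsilon/2-h_2(x')}{\delta(x')},\qquad \delta(x'):=\varepsilon+h_1(x')-h_2(x'),
\]
with $\eta\in C^\infty_c(B'_{R_1})$ a cutoff equal to $1$ on $B'_{R_1/2}$. From \eqref{h1h1}--\eqref{h1h3-1} one has $\delta(x')\sim\varepsilon+|x'|^{1+\alpha}$ and $|\nabla\bar v(x)|\le C/\delta(x')$ throughout the neck. Setting $w:=v_1-\bar v$, the residual vanishes on $(\partial D_1\cup\partial D_2)\cap\Omega_{R_1/2}$ and solves $-\Delta w=\Delta\bar v$, where the source involves only first derivatives of $h_i$ but, because $h_i\in C^{1,\alpha}$ rather than $C^{2,\alpha}$, is \emph{not} bounded uniformly as $\varepsilon\to 0$. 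This is the main obstacle.

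The key technical step is therefore to upgrade $L^2$ control of $\nabla w$ to a pointwise bound on $\nabla w$ without $W^{2,p}$ theory. I would introduce $F(t):=\int_{\Omega_t}|\nabla w|^2\,dx$ and iterate a Caccioppoli-type inequality
\[
F(t)\le \lambda\,F(s)+(\text{source contribution}),\qquad 0<t<s\le R_1,
\]
where the small ratio $\lambda<1$ is produced by the anisotropic narrowness of $\Omega_s\setminus\Omega_t$. Iterating over a geometric sequence yields a decay of the form $F(t)\le C(\varepsilon+t^{n-1+\alpha})\|\varphi\|_{C^{1,\alpha}(\partial D)}^2$; in dimension two the weight $\rho_{n,\alpha}(\varepsilon)$ will originate here from the balance between $\varepsilon$ and $t^{1+\alpha}$. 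Rescaling each cell of height $\delta(x')$ to unit scale, the De Giorgi--Nash estimate converts this $L^2$ control into an oscillation bound on $w$, and Campanato's Morrey-space characterisation of $C^{0,\beta}$ then upgrades the rescaled local $L^2$ control of $\nabla w$ to a pointwise bound at the centre of the cell. Combined with $|\nabla\bar v|\le C/\delta$, this proves (a); the symmetric argument handles $\nabla v_2$.

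For (b), $C_1,C_2$ satisfy the $2\times 2$ linear system $\sum_j a_{ij}C_j=b_i$ coming from the two flux conditions in \eqref{equ-infty-interior}, with $a_{ij}=-\int_{\partial D_i}\partial_\nu v_j|_+$ and $b_i=\int_{\partial D_i}\partial_\nu v_0|_+$. Using $\bar v$ and the previous step, one shows $a_{11},a_{22},-a_{12},-a_{21}\sim \rho_{n,\alpha}(\varepsilon)^{-1}$, while $a_{i1}+a_{i2}=\int_{\partial D_i}\partial_\nu(v_1+v_2)|_+$ and $b_i$ stay bounded by $C\|\varphi\|_{C^{1,\alpha}(\partial D)}$ since $v_1+v_2$ and $v_0$ carry no $\varepsilon$-dependent singularity. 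Rearranging as $a_{11}(C_1-C_2)+(a_{11}+a_{12})C_2=b_1$ then forces $|C_1-C_2|\le C\rho_{n,\alpha}(\varepsilon)\|\varphi\|_{C^{1,\alpha}(\partial D)}$. Substituting into the decomposition yields \eqref{upper-bounds}. Outside the neck, the inclusions are separated by a uniform positive distance, so classical Schauder estimates applied to each $v_i$ in a fixed-size $C^{1,\alpha}$ domain give $\|\nabla u\|_{L^\infty(\Omega\setminus\Omega_{R_1})}\le C\|\varphi\|_{C^{1,\alpha}(\partial D)}$.
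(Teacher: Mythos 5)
Your plan matches the paper's approach essentially step for step: the decomposition $u=C_1v_1+C_2v_2+v_0$, the comparison $w=v_1-\bar v$ with the linear interpolant, the energy iteration, the cell rescaling fed into De Giorgi--Nash plus a Campanato-type $C^{1,\alpha}$ estimate, and the $2\times 2$ flux system for $|C_1-C_2|$ all appear in the paper in the same order (Section~2, Proposition~\ref{prop-interior}, Lemma~\ref{lemma-m-1-to-2}).

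The one point where your sketch, read literally, would not go through is the iteration lemma. You set $F(t)=\int_{\Omega_t}|\nabla w|^2\,dx$ with $\Omega_t$ the full neck of radius $t$ centred at the origin. This controls the energy near the axis $x'=0'$ but does not yield the local energy $\int_{\widehat\Omega_{\delta(z')}(z')}|\nabla w|^2$ around an arbitrary $z'\in B'_{R_1}$, which is precisely what the rescaled De Giorgi--Nash step needs for a pointwise bound at $z'$. The paper instead iterates $F(t)=\int_{\widehat\Omega_t(z')}|\nabla w|^2$ on cylinders \emph{centred at each $z'$}, with increments $s-t\sim\delta(z')$ and $t$ running from $\delta(z')$ up to $\max\{\varepsilon^{1/(1+\alpha)},|z'|\}$; the localization gives a clean contraction factor $(\delta(z')/(s-t))^2$ in Caccioppoli because the thickness $\delta(x')$ is comparable to $\delta(z')$ throughout $\widehat\Omega_s(z')$, and produces the correct exponent $\int_{\widehat\Omega_{\delta(z')}(z')}|\nabla w|^2\le C\,\delta(z')^{\,n-\frac{2}{1+\alpha}}$, not the $\varepsilon+t^{n-1+\alpha}$ you guessed. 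Your remark about ``rescaling each cell of height $\delta(x')$'' shows you already have the right picture, so this is a matter of re-centring the iteration and recomputing the exponent. One further ingredient worth making explicit: since $\Delta\bar u_1$ is not in $L^\infty$ for $C^{1,\alpha}$ inclusions, the source must stay in divergence form $-\Delta w=\operatorname{div}(\nabla\bar u_1)$, and what the Campanato step actually consumes is the seminorm $[\nabla\bar u_1]_{\alpha,\widehat\Omega_s(z')}\lesssim\delta(z')^{-\alpha-\frac{1}{1+\alpha}}$ (estimate \eqref{ineq-semi-holder-norm}); without this the final bound $|\nabla w|\lesssim\delta(z')^{-\frac{1}{1+\alpha}}$ would not come out.
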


\begin{remark}\label{rem-lower-interior}
To show that the blow-up rates $\va^{\frac{-1}{1+\alp}}$ for $n=2$ and $\va^{-1}$ for $n\geq 3$ are optimal, we also have the lower bound of $|\nabla u(x)|$ on the segment $\overline{P_1 P_2}$, 
\begin{equation*}
|\nabla u(x)|\geq \frac{\rho_{n,\,\alp}(\va)}{C\va}, \quad x\in \overline{P_1 P_2}.
\end{equation*}
For more details, see subsection \ref{part 2 pf lower}.
\end{remark}

\begin{remark}\label{remark-m-to-1}
As $\alp \to 0$ (that is, $m \to 1$), one can see from \eqref{upper-bounds} that $|\nabla u(x)|\leq C\va^{-1}$. But, when $D_1$ and $D_2$ become Lipschitz domains ($m=1$), the corner singularity is another more interesting and challenging topic. See, e.g. Kozlov et al's book \cite{kmr}, Kang and Yun \cite{ky2} for bow-tie structure.
\end{remark}

\subsection{$C^{2, \alp}$ inclusions with partially ``flat'' boundaries,  when $ m =\infty$.}

In this case, we assume $D_1^0$ and $D_2^0$ are two (touching) subdomains of $D$ with $C^{2,\alpha}$ $(0<\alpha<1)$ boundaries and have a part of common boundary $\Sigma'$ with $|\Sigma'|\neq 0$, such that
$$\partial D_{1}^{0}\cap\partial D_{2}^{0}=\Sigma'\subset\mathbb R^{n-1}.$$
See Figure \ref{fig1}. Here, we suppose that $\Sigma'$ is a bounded convex domain in $\mathbb R^{n-1}$, which  can contain an $(n-1)$-dimensional ball, and its center of mass is at the origin.  Translate $D_{i}^{0}$ ($i=1,\, 2$) by $\pm\frac{\va}{2}$ along $x_{n}$-axis as in subsection 1.2 to have $D_1$ and $D_2$ like \eqref{def-D1D2-2}. 

\begin{figure}[t]
\begin{minipage}[c]{0.9\linewidth}
\centering
\includegraphics[width=1.5in]{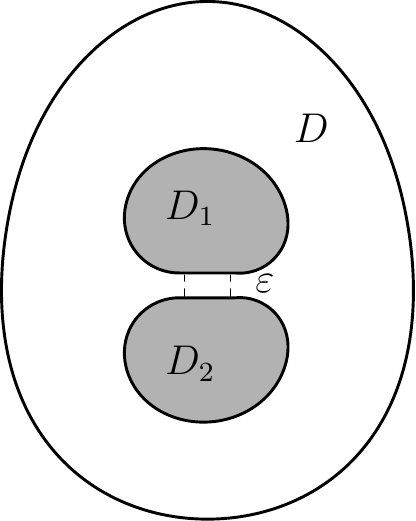}
\caption{\small Two adjacent inclusions with partially ``flat" boundaries.}
\label{fig1}
\end{minipage}
\end{figure}

The top and bottom boundaries of the narrow region between $\partial{D}_{1}$ and $\partial{D}_{2}$ can be represented as follows:  there exists a constant $R_{1}>0$, independent of $\varepsilon$, such that $h_1,\, h_2\in C^{2,\alp}(B^\prime_{2R_1}(0^\prime))$, $\Sigma'\subset B'_{R_{1}}$, satisfying, besides of \eqref{h1h2'}, \eqref{h1-h2}, \begin{equation}\label{h1h2}
h_{1}(x')=h_{2}(x')\equiv0,\quad\mbox{for}~~x'\in\Sigma'.
\end{equation}
\begin{equation}\label{h1---h2}
\nabla_{x'}h_{1}(x')=\nabla_{x'}h_{2}(x')=0,\quad\mbox{for}~~x'\in\partial\Sigma',
\end{equation}
\begin{equation}\label{h1----h2}
\nabla^{2}_{x'}(h_{1}-h_{2})(x')\geq\kappa_{2}I_{n-1},\quad\mbox{for}~~x'\in B'_{R_{1}}\setminus\overline{\Sigma'},
\end{equation}
and
\begin{equation}\label{h1h3}
\|h_{1}\|_{C^{2,\alpha}(B'_{R_{1}})}+\|h_{2}\|_{C^{2,\alpha}(B'_{R_{1}})}\leq{\kappa_3},
\end{equation}
where $\kappa_2,\ \kappa_3$ are positive constants, $I_{n-1}$ is the $(n-1)\times(n-1)$ identity matrix.

\begin{theorem}\label{thm1}
Let $D_{1}, D_{2}\subset{D}\subset\mathbb{R}^{n}$ ($n\geq2$) be two bounded $C^{2,\alp}$ subdomains, with partial flat boundaries $\Sigma'\times\{\pm\frac{\va}{2}\}$, respectively, as in Figure \ref{fig1}. Assume that \eqref{h1h2'}, \eqref{h1-h2} and \eqref{h1h2}--\eqref{h1h3} hold. Let $u\in{H}^{1}(D)\cap{C}^{1}(\overline{\Omega})$ be the solution to \eqref{equ-infty-interior} and $\varphi \in C^{2, \alp}(\ptl D)$. Then for $0<\va<1/2$ and $|\Sigma'|>0$,  we have
\begin{equation}\label{result-bound}
\|\nabla u\|_{L^\infty(\Omega_{R_{1}})} \leq \frac{C}{|\Sigma'|+\va\rho_{n}^{-1}(\varepsilon)} \|\varphi\|_{C^{2, \alp}(\partial D)}\leq\,\frac{C}{|\Sigma'|}\|\varphi\|_{C^{2, \alp}(\partial D)},
\end{equation}
and
$$\|\nabla u\|_{L^{\infty}(\Omega\setminus\Omega_{R_{1}})}\leq\,C\|\varphi\|_{C^{2, \alp}(\ptl D)},$$
where $C$ is independent of $\va$, $|\Sigma'|$ denotes the area of $\Sigma'$,  and
\begin{equation}\label{def_rhon}
\rho_{n}(\varepsilon)=
\begin{cases}
\sqrt{\varepsilon}\quad& n=2,\\
\frac{1}{|\mathrm{ln}\va|}& n=3,\\
1& n\geq4.
\end{cases}
\end{equation}
\end{theorem}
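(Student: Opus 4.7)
My plan is to adapt the capacitance decomposition of Bao-Li-Yin \cite{bly1} so as to isolate the contribution of the flat region $\Sgma'$ to the leading-order energy. First, I write $u=v_0+C_1v_1+C_2v_2$, where $v_0,v_1,v_2$ are harmonic in $\Omega$ with $v_0=\vph$ on $\ptl D$, $v_0=0$ on $\ptl D_1\cup\ptl D_2$, and $v_i=\delta_{ij}$ on $\ptl D_j$, $v_i=0$ on $\ptl D$. The two no-flux conditions in \eqref{equ-infty-interior} give the $2\times2$ linear system
\begin{equation*}
a_{i1}C_1+a_{i2}C_2=b_i,\quad i=1,2,\qquad a_{ij}:=\int_\Omega\nabla v_i\cdot\nabla v_j\,dx,\ \ b_i:=-\int_{\ptl D_i}\!\ptl_\nu v_0\big|_+.
\end{equation*}
The maximum principle applied to $u$ together with the flux conditions gives $|C_i|\leq\|\vph\|_{L^\infty}$, and standard manipulation of this symmetric positive-definite system shows that the blowing-up piece is governed by
\begin{equation*}
|C_1-C_2|\,\leq\,\frac{C\|\vph\|_{C^{2,\alp}(\ptl D)}}{a_{11}+a_{22}-2a_{12}}.
\end{equation*}
The whole estimate then reduces to (a) a pointwise bound $|\nabla v_i|\leq C/\va$ in $\Omega_{R_1}$ and (b) the sharp lower bound $a_{11}+a_{22}-2a_{12}\geq c(|\Sgma'|/\va+\rho_n^{-1}(\va))$.

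For (a), I introduce the linear interpolant
\begin{equation*}
\bar v_1(x',x_n):=\frac{x_n-h_2(x')+\va/2}{h_1(x')-h_2(x')+\va}\quad\text{in}\ \Omega_{R_1},
\end{equation*}
extended by a cutoff to match $v_1$'s boundary data outside, and set $\bar v_2:=1-\bar v_1$ on $\Omega_{R_1}$. After the standard rescaling of the narrow slab to unit size, the equation $\laplas(v_i-\bar v_i)=-\laplas\bar v_i$ has coefficients and right-hand side with uniformly bounded $L^\infty$ norms, so the $W^{2,p}$ estimate of Proposition \ref{prop1} (which genuinely needs the $C^{2,\alp}$ regularity of the inclusions) yields $\|\nabla(v_i-\bar v_i)\|_{L^\infty(\Omega_{R_1})}\leq C$ uniformly in $\va$. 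Combined with the explicit bound $|\nabla\bar v_i|\leq C/(h_1(x')-h_2(x')+\va)\leq C/\va$ and the identity $\nabla\bar v_1=\va^{-1}e_n$ on the flat slab $\Sgma'\times(-\va/2,\va/2)$, this gives (a).

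For (b), I invoke the identity $a_{11}+a_{22}-2a_{12}=\int_\Omega|\nabla(v_1-v_2)|^2\,dx$ together with $\|\nabla(w_1-w_2)\|_{L^2(\Omega)}\leq C$, where $w_i:=v_i-\bar v_i$, to obtain via $|a+b|^2\geq\tfrac12|a|^2-2|b|^2$
\begin{equation*}
a_{11}+a_{22}-2a_{12}\,\geq\,\tfrac12\int_{\Omega_{R_1}}|\nabla(\bar v_1-\bar v_2)|^2\,dx-C.
\end{equation*}
Splitting the $x'$-integration into the flat part $\Sgma'$ and the curved annulus $B'_{R_1}\setminus\overline{\Sgma'}$, the flat part contributes $\int_{\Sgma'}\int_{-\va/2}^{\va/2}4\va^{-2}\,dx_n\,dx'=4|\Sgma'|/\va$, while the curved part, under the strict convexity assumption \eqref{h1----h2}, contributes at least $c\rho_n^{-1}(\va)$ by the standard Bao-Li-Yin computation with $\rho_n$ as in \eqref{def_rhon}. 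This yields the claimed lower bound and hence the sharp control of $|C_1-C_2|$.

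To conclude, I decompose $\nabla u=\nabla v_0+\tfrac{C_1+C_2}2\nabla(v_1+v_2)+\tfrac{C_1-C_2}2\nabla(v_1-v_2)$. Since $v_1+v_2=1-v_3$ for a harmonic $v_3$ with $\va$-independent boundary data, the first two terms are uniformly bounded by $C\|\vph\|_{C^{2,\alp}(\ptl D)}$. Multiplying the sharp control of $|C_1-C_2|$ by the pointwise bound $|\nabla(v_1-v_2)|\leq C/\va$ produces the estimate \eqref{result-bound} in $\Omega_{R_1}$, and the exterior bound on $\Omega\setminus\Omega_{R_1}$ follows from standard Schauder theory because the inclusions are uniformly separated there. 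The hardest step will be (b): the error constant in $\|\nabla w_i\|_{L^\infty}$ must be independent of both $|\Sgma'|$ and $\va$, otherwise it would swallow the flat-region energy $4|\Sgma'|/\va$ that drives the whole non-blow-up mechanism; this is precisely where the $C^{2,\alp}$ regularity (in contrast to the $C^{1,\alp}$ setting of Theorem \ref{thm-interior}) is indispensable.
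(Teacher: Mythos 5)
Your proposal is correct and follows the same strategy as the paper: the Bao--Li--Yin decomposition $u=v_0+C_1v_1+C_2v_2$, the explicit auxiliary function $\bar v_1$ (and $\bar v_2=1-\bar v_1$) together with the $\va$- and $|\Sigma'|$-independent $W^{2,p}$/iteration bound of Proposition~\ref{prop1}, and the reduction of blow-up control to a lower bound on a capacitance-type energy. The paper works with $|a_{11}|=\int_\Omega|\nabla v_1|^2$ (Lemma~\ref{lemma_a11}) and the pointwise two-sided bound~\eqref{v1-bounded1} to split the energy over $\Sigma$ and $\Omega_{R_1}\setminus\Sigma$, whereas you use the symmetrized quantity $a_{11}+a_{22}-2a_{12}=\int_\Omega|\nabla(v_1-v_2)|^2$ and the reverse triangle inequality on $\nabla(\bar v_1-\bar v_2)+\nabla(w_1-w_2)$; the two are comparable since $a_{11}+a_{12}$ and $a_{22}+a_{12}$ are $O(1)$ by the boundedness of $\nabla(v_1+v_2)$, so both give the rate $|\Sigma'|/\va+\rho_n^{-1}(\va)$. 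One small point: the bound $|C_1-C_2|\le C\|\varphi\|/(a_{11}+a_{22}-2a_{12})$ is not an immediate ``standard manipulation'' of the $2\times2$ system; it follows by subtracting the two flux equations, writing $C_i=\tfrac{C_1+C_2}{2}\pm\tfrac{C_1-C_2}{2}$, and then invoking $|C_1+C_2|\le C$ and $|a_{11}-a_{22}|\le C$ (both of which you have), so this is a presentational gap rather than a mathematical one.
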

\begin{remark}
The proof of Theorem \ref{thm1} actually gives a stronger pointwise upper bound estimate:
\begin{align}\label{upperbound1'}
|\nabla{u}(x)|\leq \frac{C\varepsilon}{|\Sigma'|+\va\rho_{n}^{-1}(\varepsilon)}\cdot\frac{1}{\varepsilon+\dist^{2}(x, \Sigma)}\cdot\|\varphi\|_{C^{2, \alp}(\partial D)},\quad\mbox{for}~x\in\Omega_{R_{1}}.
\end{align}
where $\Sigma:=\Sigma'\times(-\frac{\va}{2},\frac{\va}{2})$.
\end{remark}

\begin{remark}\label{rmk1}
If $\Sigma'=\{0'\}$, then $|\Sigma^\prime|=0$. From \eqref{upperbound1'}, we can see that the upper bound estimate actually is
$$|\nabla{u}(0',x_{n})|\leq \frac{C\rho_{n}(\va)}{\va}\cdot\|\varphi\|_{C^{2, \alp}(\partial D)},\quad -\frac{\va}{2}<x_n<\frac{\va}{2}.
$$
This is consistent with the known results, see \cite{bly1,akl,aklll,ly} for instance. While, if $|\Sigma'|>0$, then $|\nabla u|$ is bounded in $\Omega$  from \eqref{upperbound1'}, which implies that no blow-up occurs.
\end{remark}

In contrast with the blow-up result of Theorem \ref{thm-interior}, Theorem \ref{thm1} shows the boundedness of $|\nabla u|$ whenever $|\Sigma'|>0$. In order to further reveal such blow-up mechanics, we consider the intermediate cases that the relative convexity between $D_1$ and $D_2$ is of order $2\leq\,m<\infty$. Namely, we assume that, besides of \eqref{h1h2'}--\eqref{h1h1}, 
\begin{equation}\label{h1h_convex1-1}
|\nabla h_i(x^\prime)|\leq C|x^\prime|^{m-1}, \quad |\nabla^2 h_i(x^\prime)|\leq C|x^\prime|^{m-2},\quad \text{for} ~|x^\prime|\leq 2R_1,~i=1,\, 2,
\end{equation}
and
\begin{equation}\label{h1h_convex1}
\lam_0|x'|^{m}\leq h_{1}(x')-h_{2}(x')\leq \lam_1|x'|^{m}, \quad \text{for} ~|x'|<2R_{1},
\end{equation}
for $0<\lambda_{0}<\lambda_{1}$. By a slight modification of the proof of Theorem \ref{thm1}, we have the following upper bound estimates with different blow-up rates, which tend to $O(1)$ as $m\to\infty$.

\begin{theorem}\label{rem-m-geq2}
Let $D_{1}, D_{2}\subset{D}\subset\mathbb{R}^{n}$ ($n\geq2$) be two bounded $C^{2,\alp}$ subdomains with $\Sigma'=\{0'\}$. Suppose \eqref{h1h2'}--\eqref{h1h1},  \eqref{h1h_convex1-1} and \eqref{h1h_convex1} hold. Let $u\in{H}^{1}(D)\cap{C}^{1}(\overline{\Omega})$
be the solution of \eqref{equ-infty-interior} and $\varphi \in C^{2, \alp}(\ptl D)$.  Then for small $\va>0$, 
\begin{align}\label{upperbound-m-infty}
|\nabla{u}(x)|\leq \frac{C \rho_{n,\,m}(\varepsilon)}{\varepsilon+|x'|^{ m}}\cdot\|\varphi\|_{C^{2, \alp}(\partial D)},\quad\mbox{for}~x\in\Omega_{R_{1}},
\end{align}
and
$$\|\nabla u\|_{L^{\infty}(\Omega\setminus\Omega_{R_{1}})}\leq\,C\|\varphi\|_{C^{2, \alp}(\ptl D)},$$
where
\begin{align*}
\rho_{n,\,m}(\varepsilon)=\begin{cases}\varepsilon^{1-\frac{n-1}{m}}& m>n-1,\\
\frac{1}{|\ln\varepsilon|}& m=n-1,\\
1 & m<n-1.
\end{cases}
\end{align*}
\end{theorem}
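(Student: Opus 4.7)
The plan is to adapt the three-step framework used for Theorem \ref{thm1} (following Bao-Li-Yin \cite{bly1} and Bao-Li-Li \cite{bll}) to general $m$-convexity. First I would decompose $u = C_1 v_1 + C_2 v_2 + v_0$ in $\Omega$, where the $v_i$ are harmonic in $\Omega$ with $v_i|_{\partial D_j} = \delta_{ij}$, $v_i|_{\partial D} = 0$ and $v_0|_{\partial D_i} = 0$, $v_0|_{\partial D} = \varphi$. Standard elliptic regularity together with a maximum principle argument yields $|\nabla v_0| + |\nabla(v_1 + v_2)| \leq C\|\varphi\|_{C^{2,\alpha}}$, so the singular part of $\nabla u$ in $\Omega_{R_1}$ is carried entirely by $(C_1 - C_2)\nabla v_1$.

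Second, I would introduce an auxiliary function $\bar v_1$ that on the narrow region agrees with the linear-in-$x_n$ interpolation
\begin{equation*}
\bar v_1(x) \,=\, \frac{x_n + \tfrac{\varepsilon}{2} - h_2(x')}{\varepsilon + h_1(x') - h_2(x')}, \qquad x \in \Omega_{2R_1},
\end{equation*}
extended smoothly to $\Omega$ with $\bar v_1|_{\partial D_i} = \delta_{i1}$ and $\bar v_1|_{\partial D} = 0$. Hypothesis \eqref{h1h_convex1} gives denominator size $\varepsilon + |x'|^m$, and together with \eqref{h1h_convex1-1} a direct computation produces $|\nabla \bar v_1(x)| \leq C/(\varepsilon + |x'|^m)$ in $\Omega_{R_1}$. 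The remainder $w := v_1 - \bar v_1 \in H^1_0(\Omega)$ satisfies $-\Delta w = \Delta \bar v_1$. Since $\partial D_i$ is $C^{2,\alpha}$, one may carry out the Bao-Li-Li iteration on the cylinders $\Omega_\delta(z') := \{(x',x_n) \in \Omega : |x' - z'| < \delta\}$: after anisotropic rescaling with horizontal scale $\delta \sim |z'|$ and vertical scale $\varepsilon + |z'|^m$, $W^{2,p}$ estimates apply on a fixed-size domain, and summing the resulting energy decay over the dyadic shells $|z'| \sim 2^{-k} R_1$ gives the pointwise bound $|\nabla w(x)| \leq C/(\varepsilon + |x'|^m)$, hence $|\nabla v_1(x)| \leq C/(\varepsilon + |x'|^m)$ throughout $\Omega_{R_1}$. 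This step is where I expect the main difficulty, because the mismatch between the horizontal and vertical scales forces the rescaling constants, and therefore the $W^{2,p}$ norms of the rescaled $\Delta \bar v_1$, to be tracked uniformly in $z'$ and in the exponent $m$ so that the telescoping over dyadic shells reproduces the correct $m$-dependent rate.

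Third, I would estimate $|C_1 - C_2|$ via the flux conditions $\int_{\partial D_i} \partial_\nu u|_+ = 0$, which express $(C_1, C_2)$ as the solution of a $2 \times 2$ linear system whose dominant coefficient is $a_{11} := \int_\Omega |\nabla v_1|^2$. Replacing $v_1$ by $\bar v_1$ up to controlled error gives
\begin{equation*}
a_{11} \,\sim\, \int_{|x'| < R_1} \frac{dx'}{\varepsilon + |x'|^m},
\end{equation*}
and evaluating this $(n-1)$-dimensional integral in polar coordinates yields exactly $\rho_{n,m}(\varepsilon)^{-1}$ in the three regimes $m > n-1$, $m = n-1$, $m < n-1$. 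Cramer's rule combined with $\bigl|\int_\Omega \nabla v_0 \cdot \nabla v_i\bigr| \leq C\|\varphi\|_{C^{2,\alpha}}$ then gives $|C_1 - C_2| \leq C \rho_{n,m}(\varepsilon)\|\varphi\|_{C^{2,\alpha}}$. Combining this with the pointwise bound on $\nabla v_1$ proves \eqref{upperbound-m-infty}, while the estimate on $\Omega \setminus \Omega_{R_1}$ follows from standard boundary regularity.
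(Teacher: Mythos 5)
Your proposal follows the same route the paper intends. The paper does not spell out a proof of Theorem \ref{rem-m-geq2}; it merely says the result follows ``by a slight modification of the proof of Theorem \ref{thm1}.'' You have correctly reconstructed the skeleton of that modification: the decomposition $u=C_1v_1+C_2v_2+v_0$ with $\nabla v_0$ and $\nabla(v_1+v_2)$ uniformly bounded, the auxiliary function $\bar u_1$ whose vertical derivative is $1/\delta(x')$ with $\delta(x')\sim\varepsilon+|x'|^m$ under \eqref{h1h_convex1}, the iteration of \cite{bll} to get $|\nabla(v_1-\bar u_1)|\leq C$ and hence $|\nabla v_1|\sim(\varepsilon+|x'|^m)^{-1}$, and finally the computation of
\begin{equation*}
-a_{11}=\int_\Omega|\nabla v_1|^2\sim\int_0^{R_1}\frac{r^{n-2}}{\varepsilon+r^m}\,dr\sim\rho_{n,m}(\varepsilon)^{-1},
\end{equation*}
whose three asymptotic regimes (substituting $r=\varepsilon^{1/m}s$) give exactly the three cases of $\rho_{n,m}$; feeding this into $|C_1-C_2|\leq C|a_{11}|^{-1}\|\varphi\|$ produces \eqref{upperbound-m-infty}. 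This is the argument.

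One point to correct in your Step 2 description: the Bao--Li--Li rescaling is \emph{isotropic}, with the same scale $\delta(z')=\varepsilon+h_1(z')-h_2(z')\sim\varepsilon+|z'|^m$ in both the horizontal and vertical directions, taking the cylinder $\widehat\Omega_{\delta(z')}(z')$ to the unit-sized domain $Q_1$ where $W^{2,p}$ estimates apply. The anisotropic rescaling you describe, horizontal scale $\sim|z'|$ and vertical scale $\sim\varepsilon+|z'|^m$, would turn $\Delta$ into a badly degenerate elliptic operator when $\varepsilon+|z'|^m\ll|z'|$ (which is the generic situation for $m\geq 2$), so $W^{2,p}$ would not apply uniformly. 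The ``mismatch'' you worry about is precisely avoided by taking the ball of radius $\delta(z')$, not $|z'|$, around $z'$; the iteration then runs from the macroscopic scale $\max\{\varepsilon^{1/m},|z'|\}$ down to $\delta(z')$ in $O(\delta(z')^{-1}\max\{\varepsilon^{1/m},|z'|\})$ steps, not over dyadic shells. With that correction your proposal matches the proof the authors had in mind.
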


    The rest of this paper is organized as follows. In Section 2, we first decompose the solution $u=C_{1}v_{1}+C_{2}v_{2}+v_{0}$, where $C_{i}$ are from the free constant boundary conditions $u=C_{i}$,  and $v_{i}$ are solutions of boundary value problems with given Dirichlet data. Then we reduce the proof of Theorem \ref{thm-interior} and \ref{thm1} to the estimates for $\nabla v_{i}$ and the estimates for $C_{i}$. The key estimate is the pointwise upper and lower bounds for $|\nabla v_{1}|$ in the narrow region $\Omega_{R_1}$, see Propositions \ref{prop-interior} and \ref{prop1}, which will also be used to estimate $|C_{1}-C_{2}|$. When $\partial D_{1}$ and $\partial D_{2}$ are of $C^{1,\alpha}$, in order to prove Proposition \ref{prop-interior}, we need to adapt the classical $C^{1, \alp}$ estimates \cite{gm} to our setting with partially zero boundary condition, see Theorem \ref{lem-global-C1alp-estimates}. It can be regarded as the analogue of theorem 9.13 in \cite{gt}.  We give its proof in the Appendix. In Section \ref{sec3}, we use Theorem \ref{lem-global-C1alp-estimates} to prove Proposition \ref{prop-interior}. In Section \ref{sec4}, we use the iteration technique with respect to the energy, developed in \cite{bll}, and $W^{2,p}$ estimates to prove Proposition \ref{prop1} and Proposition \ref{lemma-nabla-v1+v2}.

\vspace{0.5cm}

\section{Outline of the proof for two main results}

In this section, we shall give the main ingredients to prove Theorem \ref{thm-interior} and \ref{thm1} and outline the proofs. 

We first use the following decomposition as in \cite{bly1}: 
\begin{equation}\label{decomposition1_u}
u(x)=C_{1}v_{1}(x)+C_{2}v_{2}(x)+v_{0}(x),\qquad\qquad~x\in\,\Omega ,
\end{equation}
where $C_i := C_i (\va)$ is the free boundary value of $u$ on $\ptl D_i$, $i=1, \, 2$, to be determined by $u$.  Meanwhile,  $v_{j}\in{C}^{2}(\Omega)~(j=0,1,2)$, respectively, satisfies
\begin{equation}\label{equ_v1}
\begin{cases}
\Delta{v}_{1}=0& \mathrm{in}~\Omega,\\
v_{1}=1& \mathrm{on}~\partial{D}_{1},\\
v_{1}=0& \mathrm{on}~\partial{D_{2}}\cup\partial{D},
\end{cases}
\quad
\begin{cases}
\Delta{v}_{2}=0& \mathrm{in}~\Omega,\\
v_{2}=1& \mathrm{on}~\partial{D}_{2},\\
v_{2}=0& \mathrm{on}~\partial{D_{1}}\cup\partial{D},
\end{cases}
\end{equation}
and
\begin{equation}\label{equ_v3}
\begin{cases}
\Delta{v}_{0}=0& \mathrm{in}~\Omega,\\
v_{0}=0& \mathrm{on}~\partial{D}_{1}\cup\partial{D_{2}},\\
v_{0}=\varphi& \mathrm{on}~\partial{D}.
\end{cases}
\end{equation}
From \eqref{decomposition1_u}, one has
\begin{align}\label{decomposition_u2}
\nabla{u}(x)=&C_{1}\nabla{v}_{1}(x)+C_{2}\nabla{v}_{2}(x)+\nabla{v}_{0}(x) \notag \\
=&(C_{1}-C_{2})\nabla{v}_{1}(x)+C_{2}\nabla({v}_{1}+{v}_{2})(x)+\nabla{v}_{0}(x),\qquad~x\in\,\Omega.
\end{align}
Thus, in order to prove \eqref{upper-bounds} and \eqref{result-bound}, it suffices to estimate $|\nabla v_1|$, $|\nabla (v_1+v_2)|$, $|\nabla v_0|$, $|C_1-C_2|$ and $|C_i|$ ($i=1, \, 2$), respectively.

To estimate $|\nabla v_1|$, we introduce an auxiliary function $\bar{u}_{1}\in{C}^{1, \alp}(\mathbb{R}^{n})$ such that $\bar{u}_{1}=1$ on $\partial{D}_{1}$, $\bar{u}_{1}=0$ on $\partial{D}_{2}\cup\partial D$,
\begin{align}\label{ubar}
\bar{u}_{1}(x', x_n)
=\frac{x_{n}-h_{2}(x')+\va/2}{\varepsilon+h_{1}(x')-h_{2}(x')},\ \ \qquad~(x', x_n)\in\,\Omega_{R_{1}},
\end{align}
and
\begin{equation}\label{ubar-out}
\|\bar{u}_{1}\|_{C^{1,\alp}(\Omega\setminus \Omega_{R_{1}})}\leq\,C.
\end{equation}
Here and throughout this paper, unless otherwise stated, $C$ denotes a constant, whose value may vary from line to line, depending only on $n,\lambda_{0},\lambda_{1}, \kappa_{0},\ \kappa_1$, $ \kappa_2,\ \kappa_3$, $ R_{1}$ and an upper bound of the $C^{2,\alpha}$ (or $C^{1,\alpha}$) norms of $\partial{D}_{1}$ and $\partial{D}_2$, but not on $\varepsilon$. Also, we call a constant having such dependence a {\it universal constant}.

By using \eqref{h1h1}, \eqref{h1h_convex2} and denoting $\ptl_{i}:=\ptl/\ptl x_i$,  
a direct calculation yields that
\begin{equation}\label{nablau_bar-interior}
\left|\partial_{i}\bar{u}_{1}(x)\right|\leq\frac{C|x'|^{\alp}}{\varepsilon+|x'|^{1+\alp}},\quad\,i=1,\cdots,n-1,\quad \partial_{n}\bar{u}_{1}=\frac{1}{\delta(x^\prime)},  ~\mbox{for}~ x\in \Omega_{R_1},
\end{equation}
where
\begin{equation}\label{delta_x}
\delta(x'):=\varepsilon+h_1(x')-h_2(x').
\end{equation}

Set
$$w:=v_{1}-\bar{u}_{1}.$$
Then
\begin{prop}\label{prop-interior}
Under the hypotheses of Theorem \ref{thm-interior}. Let $v_1$  $\in H^1(\Omega )$ be the weak solution of \eqref{equ_v1}. Then for small $\va >0$, 
\begin{equation}\label{nabla-w-i0}
|\nabla w(x^\prime, x_n)|\leq \frac{C}{(\va+|x^\prime|^{1+\alp})^{\frac{1}{1+\alp}}},\quad  (x^\prime,x_n)\in \Omega_{ R_1}, 
\end{equation}
where $C$ is independent of $\va$. Consequently,
\begin{equation}\label{nabla-v-i0}
\frac{1}{C(\va +|x^\prime|^{1+\alp})}\leq|\nabla v_{1}(x^\prime, x_n)|\leq \frac{C}{\va +|x^\prime|^{1+\alp}},\quad  (x^\prime,x_n)\in \Omega_{ R_1}. 
\end{equation}
\end{prop}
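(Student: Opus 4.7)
The plan is to localize, rescale to a unit-sized domain, and apply the adapted global $C^{1,\alpha}$ estimate of Theorem \ref{lem-global-C1alp-estimates}. I first collect basic properties of $w := v_1 - \bar u_1$. The construction \eqref{ubar}--\eqref{ubar-out} ensures $\bar u_1 = v_1$ on $\partial\Omega = \partial D_1\cup\partial D_2\cup\partial D$, so $w \in H^1_0(\Omega)$. The maximum principle gives $0\le v_1\le 1$ and the explicit form gives $0\le \bar u_1\le 1$ on $\Omega_{R_1}$, hence $\|w\|_{L^\infty(\Omega_{R_1})}\le 2$. Since $v_1$ is harmonic, $w$ satisfies the weak equation
\begin{equation*}
\int_\Omega \nabla w\cdot\nabla\varphi\, dx = -\int_\Omega \nabla\bar u_1\cdot\nabla\varphi\, dx, \qquad \forall\, \varphi \in H^1_0(\Omega),
\end{equation*}
which plays the role of $-\Delta w = \Delta \bar u_1$; the right-hand side is only distributional, since $\bar u_1 \in C^{1,\alpha}$.

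For each $z_0'\in B'_{R_1/2}$, let $r := |z_0'|$ and choose the natural scale $s := (\varepsilon + r^{1+\alpha})^{1/(1+\alpha)}$, so that $\delta(z_0')\approx s^{1+\alpha}$. With $c_0 := \tfrac{1}{2}(h_1(z_0')+h_2(z_0'))$, the dilated function $\widetilde w(y) := w(z_0'+sy',\, sy_n+c_0)$ lives on a region $\widetilde Q$ whose upper and lower graphs $\tilde h_i(y') := s^{-1}[h_i(z_0'+sy')-c_0]$ have $C^{1,\alpha}$ norm bounded uniformly in $z_0'$ and $\varepsilon$ (by \eqref{h1h_convex2}--\eqref{h1h3-1}); on these graphs $\widetilde w$ vanishes and $\|\widetilde w\|_{L^\infty(\widetilde Q)}\le 2$. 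Applying Theorem \ref{lem-global-C1alp-estimates} to $\widetilde w$ then yields $\|\nabla_y \widetilde w\|_{L^\infty(\widetilde Q')}\le C$ on a sub-region $\widetilde Q'$ at unit distance from the lateral boundary $|y'|=1$; undoing the dilation gives $|\nabla w|\le C/s$, which is \eqref{nabla-w-i0}. The complementary region $\Omega_{R_1}\setminus \Omega_{R_1/2}$ is handled by classical interior estimates, since there $\delta$ is bounded below.

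Once \eqref{nabla-w-i0} is proved, the bounds \eqref{nabla-v-i0} on $|\nabla v_1|$ follow from $\nabla v_1 = \nabla\bar u_1 + \nabla w$ together with \eqref{nablau_bar-interior}. The upper bound is immediate, as $1/s\le C/(\varepsilon+|x'|^{1+\alpha})$. For the lower bound, the ratio $|\partial_n w|/|\partial_n \bar u_1| \le C\delta/s = Cs^\alpha$ tends to zero as $s\to 0$, so $|\partial_n v_1|\ge \tfrac{1}{2}|\partial_n\bar u_1| = (2\delta)^{-1}$ whenever $s$ is below a universal threshold; the regime of bounded $s$ reduces to a standard non-degeneracy argument.

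The main obstacle is the application of Theorem \ref{lem-global-C1alp-estimates} in the rescaling step. After the isotropic dilation by $s$, the rescaled cylinder $\widetilde Q$ has horizontal extent $O(1)$ but vertical extent $O(s^\alpha)\to 0$, so it degenerates transversally. A naive invocation of the classical global $C^{1,\alpha}$ Schauder estimate would give a constant that blows up with this aspect ratio. The content of Theorem \ref{lem-global-C1alp-estimates}, the analogue of \cite[Theorem 9.13]{gt} with \emph{partially} zero Dirichlet data (imposed only on the two $C^{1,\alpha}$ graphs and not on the lateral sides), is precisely to furnish a gradient bound with universal constants: the zero data on the transverse sides provide enough control to interpolate with the $L^\infty$ bound on $\widetilde w$. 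Verifying its hypotheses in the rescaled setting---in particular the $L^\infty$ control of the rescaled source derived from $\bar u_1$---will be the technical core of Section \ref{sec3}.
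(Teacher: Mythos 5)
The proposal contains a genuine and fatal gap. You rescale by $s=(\varepsilon+|z_0'|^{1+\alpha})^{1/(1+\alpha)}$, obtaining a thin slab $\widetilde Q$ of thickness $\delta/s\sim s^\alpha\to 0$, and then assert that Theorem~\ref{lem-global-C1alp-estimates}, by virtue of the zero Dirichlet data on the top and bottom graphs, yields a gradient bound with constants uniform in this degenerating aspect ratio. That assertion is unfounded: Theorem~\ref{lem-global-C1alp-estimates} is a localization of the boundary Schauder estimate (the analogue of Theorem~9.13 in \cite{gt}), and its constant $C(n,\alpha,Q',Q)$ depends on the domain geometry; zero data on a $C^{1,\alpha}$ boundary portion lets one \emph{reach} that portion, but does not make the constant independent of the slab thickness. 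Moreover, with this larger scale $s\gg\delta$, the region $\widehat\Omega_s(z_0')$ includes points with $\delta(x')\ll\delta(z_0')$ (near the origin), where $\nabla\bar u_1$ is far more singular; a direct computation shows that the rescaled source seminorm $[\nabla_y\widetilde u]_{\alpha,\widetilde Q}=s^{1+\alpha}[\nabla\bar u_1]_{\alpha,\widehat\Omega_s(z_0')}$ is of order $s^{-\alpha}$ (not $O(1)$), so the hypotheses of the theorem are not even uniformly bounded.

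The deeper structural omission is the iteration argument. Even if one rescales isotropically by $\delta(z')$ (as the paper does), obtaining a genuinely unit-sized $Q_1$, feeding the crude bound $\|\widetilde w\|_{L^\infty(Q_1)}\le 2$ into the Schauder estimate gives only $|\nabla w|\lesssim 1/\delta$, which is the trivial bound one already has from $|\nabla\bar u_1|$ and costs nothing. The assertion $|\nabla w|\lesssim \delta^{-1/(1+\alpha)}$ in \eqref{nabla-w-i0} is an \emph{improvement} by a factor of $\delta^{\alpha/(1+\alpha)}$, and it is supplied precisely by the local energy estimate
$\int_{\widehat\Omega_{\delta(z')}(z')}|\nabla w|^2\,dx\le C\,\delta(z')^{\,n-\frac{2}{1+\alpha}}$,
obtained by the iteration of Step~2 in Section~\ref{sec3} starting from the global energy bound and the refined seminorm inequality~\eqref{ineq-semi-holder-norm}. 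This decaying $L^2$ bound, fed through Poincar\'e and the De Giorgi--Nash $L^\infty$ estimate before the $C^{1,\alpha}$ estimate is invoked, is what produces the gain; Poincar\'e alone (bounding $\|\widetilde w\|_{L^2}$ by $\|\nabla\widetilde w\|_{L^2}$) goes in the wrong direction to help you, since the quantity you need to bound \emph{is} $\|\nabla\widetilde w\|_{L^2}$. Without Steps~1 and~2 of the paper's proof, the argument collapses back to the trivial rate $1/\delta$.
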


Because $h_{1}$ and $h_{2}$ here are only of $C^{1,\alpha}$, now $\bar{u}_{1}$ is not twice continuously differentiable. Thus, we do not have $-\Delta w=\Delta \bar{u}_{1}$ any more, and can not immediately apply $W^{2,\, p }$ estimates to obtain $|\nabla w|\leq\,C$, like in \cite{LX}. We now write the right hand side in divergence form
$$-\Delta w=\mathrm{div} (\nabla\bar{u}_{1}).$$
We turn to the $C^{1, \alp}$ estimates for elliptic equations to prove Proposition \ref{prop-interior}.  With the aid of  De Giorgi-Nash estimates, we adapt the classical $C^{1, \alp}$ estimates \cite{gm} to our setting with partially zero boundary condition, which can be regarded as the analogue of theorem 9.13 in \cite{gt}. For readers' convenience, we give its proof in the Appendix.

\begin{theorem}\label{lem-global-C1alp-estimates}
Let $Q$ be a bounded domain in $\R^n$, $n\geq 2$, with a $C^{1,\alp}$ boundary portion $\Gamma \subset \ptl Q$.  Let $\widetilde{w} \in{H}^{1}(Q)\cap{C}^{1}(Q \cup \Gamma)$ be the solution of
\begin{equation}\label{equ-w-divf-q1}
\left\{ \begin{aligned}
-\Delta \widetilde{w}&=\Div \tilde{\mathbf{f}}\quad &\text{in}\quad Q, \\
\widetilde{w}&=0 \quad &\text{on} \quad \Gamma,
\end{aligned}\right.
\end{equation}
where $\tilde{\mathbf{f}}\in C^{\alp}(Q, \R^n)$, $0<\alp<1$. Then for any domain $Q^\prime \subset\subset Q \cup \Gamma$,
\begin{equation}\label{ineq-global-C1alp-estimates}
\|   \widetilde{w} \|_{C^{1, \, \alp}(Q^\prime)} \leq C\left( \|  \widetilde{w} \|_{L^\infty( Q)}+[\tilde{\mathbf{f}}]_{\alp,\, Q}\right),
\end{equation}
where $C=C(n, \alp, Q^\prime, Q)$.
\end{theorem}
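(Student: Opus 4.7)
The plan is to follow the Campanato-type perturbation strategy from the Giaquinta-Martinazzi tradition, adapted to the divergence-form right-hand side and to the partial-boundary condition on $\Gamma$. The three ingredients are (i) a Caccioppoli energy bound together with a De Giorgi-Nash $L^\infty$ estimate to initialize the iteration, (ii) a freezing and harmonic-replacement argument up to the flat portion of the boundary to produce Campanato decay of $\int|\nabla\widetilde{w}-(\nabla\widetilde{w})_r|^2$, and (iii) the Campanato isomorphism $\mathcal{L}^{2,n+2\alpha}\simeq C^{0,\alpha}$ to conclude. By a standard partition-of-unity covering of $\overline{Q'}$, it suffices to establish the decay on balls centered either in $Q$ (the interior case is classical) or at points $x_0\in\Gamma\cap\overline{Q'}$; I focus on the latter.

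For the boundary case, I use the $C^{1,\alpha}$ regularity of $\Gamma$ to straighten it near $x_0$ by a $C^{1,\alpha}$ diffeomorphism $\Phi$. Setting $W:=\widetilde{w}\circ\Phi^{-1}$ and letting $g^i$ denote the pushforward of $\tilde{f}^j$ times the appropriate Jacobian factors, one obtains
$$-\partial_i\bigl(a^{ij}(y)\,\partial_j W\bigr)=\partial_i g^i\quad\text{in}\ B_R^+,\qquad W=0\quad\text{on}\ T_R:=B_R\cap\{y_n=0\},$$
with $a^{ij}\in C^{0,\alpha}(\overline{B_R^+})$ uniformly elliptic and $g^i\in C^{0,\alpha}(\overline{B_R^+})$. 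At each scale $r\leq R/2$ and each $y_0\in T_{R/4}$, introduce the harmonic replacement $h\in H^1(B_r^+(y_0))$ solving the constant-coefficient problem $-\partial_i\bigl(a^{ij}(y_0)\partial_j h\bigr)=0$ in $B_r^+(y_0)$ with $h=W$ on the spherical part of $\partial B_r^+(y_0)$ and $h=0$ on $T_r(y_0)$. Standard boundary regularity for constant-coefficient elliptic equations on a half-ball with zero Dirichlet on the flat part gives the gradient decay
$$\int_{B_\rho^+(y_0)}|\nabla h-(\nabla h)_{\rho}|^2\,dy\leq C(\rho/r)^{n+2}\int_{B_r^+(y_0)}|\nabla h-(\nabla h)_{r}|^2\,dy,\qquad 0<\rho\leq r.$$
For the difference $\varphi:=W-h\in H^1_0(B_r^+(y_0))$, testing against itself and using the Hölder moduli of $a^{ij}$, together with the trick of replacing $g^i$ by $g^i-g^i(y_0)$ (which leaves $\Div g$ distributionally unchanged but has $L^\infty$ norm $\leq[g]_\alpha r^\alpha$), produces the perturbation bound
$$\int_{B_r^+(y_0)}|\nabla\varphi|^2\,dy\leq Cr^{2\alpha}\int_{B_r^+(y_0)}|\nabla W|^2\,dy+Cr^{n+2\alpha}[\tilde{\mathbf{f}}]_{\alpha,Q}^2.$$

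Combining these two via the triangle inequality, the quantity $\Phi(r):=\int_{B_r^+(y_0)}|\nabla W-(\nabla W)_r|^2$ satisfies the Campanato iteration inequality $\Phi(\rho)\leq C\bigl[(\rho/r)^{n+2}+r^{2\alpha}\bigr]\Phi(r)+C\rho^{n+2\alpha}[\tilde{\mathbf{f}}]_{\alpha,Q}^2$, which a standard iteration lemma upgrades to $\Phi(\rho)\leq C\rho^{n+2\alpha}$ uniformly in $y_0$. Patching the resulting Campanato bound with the analogous interior bound over a finite cover of $\overline{Q'}$ and applying Campanato's embedding yields $\nabla W\in C^{0,\alpha}$ locally; transferring back through $\Phi$ delivers \eqref{ineq-global-C1alp-estimates}, where the initial energy $\int|\nabla W|^2$ is controlled by $\|\widetilde{w}\|_{L^\infty(Q)}^2+[\tilde{\mathbf{f}}]_{\alpha,Q}^2$ via the Caccioppoli inequality (with a De Giorgi-Nash step to supply the $L^\infty$ control when needed). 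The main obstacle I anticipate is making the harmonic-replacement step rigorous up to the flat boundary: the clean odd-reflection argument only works after first changing variables to normalize $a^{ij}(y_0)$ to the identity, and then re-straightening the boundary, so one must verify that these two successive affine changes do not destroy the form of the decay estimate and that the resulting constants remain independent of the base point $y_0$ and the scale $r$.
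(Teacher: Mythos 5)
Your overall strategy---Campanato-type perturbation after straightening the $C^{1,\alpha}$ boundary, freezing coefficients at the base point, and patching local bounds over a finite cover of $\overline{Q'}$---is the same framework the paper uses. The visible difference is that the paper takes the interior Campanato estimate (Theorem 5.14 of Giaquinta--Martinazzi) and its half-ball analogue (Corollary~\ref{lem-halfball-estimate}) as black boxes, then devotes its proof to straightening via $\mathbf{\Psi}$, normalizing $\mathcal{A}(y_0)$ by a linear map $z=\mathbf{P}^T y$, subtracting the mean $\mathbf{F}_{\mathcal{B}_R^+}$ from the right-hand side, and carrying out a careful covering/absorption argument; you instead sketch a self-contained proof of the boundary decay by harmonic replacement. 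Both routes are legitimate, and the normalization/reflection caveat you already flag is real but routine.

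There is, however, a genuine gap in your iteration step. Your perturbation bound reads $\int_{B_r^+}|\nabla\varphi|^2 \le Cr^{2\alpha}\int_{B_r^+}|\nabla W|^2 + Cr^{n+2\alpha}[\tilde{\mathbf{f}}]_{\alpha}^2$, with the \emph{full energy} $\int_{B_r^+}|\nabla W|^2$ on the right, not the oscillation $\Phi(r)=\int_{B_r^+}|\nabla W-(\nabla W)_r|^2$. The claimed inequality $\Phi(\rho)\le C[(\rho/r)^{n+2}+r^{2\alpha}]\Phi(r)+C\rho^{n+2\alpha}$ therefore does not follow: the triangle inequality only yields $\Phi(\rho)\le C(\rho/r)^{n+2}\Phi(r)+Cr^{2\alpha}\int_{B_r^+}|\nabla W|^2+Cr^{n+2\alpha}$, and $\int_{B_r^+}|\nabla W|^2=\Phi(r)+|B_r^+|\,|(\nabla W)_r|^2$ contains an extra term of order $r^n|(\nabla W)_r|^2$ that is not controlled by $\Phi(r)$ or by $\rho^{n+2\alpha}$ a priori. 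The standard repair is a two-step bootstrap: first iterate on $E(r)=\int_{B_r^+}|\nabla W|^2$ using the decay rate $(\rho/r)^n$ for $\int|\nabla h|^2$ to obtain $E(r)\le Cr^{n-\epsilon}$ for every small $\epsilon$ (hence $\nabla W\in C^{0,\beta}_{\mathrm{loc}}$ for all $\beta<\alpha$); then, once $\|\nabla W\|_{L^\infty}$ is locally finite, insert $E(r)\le Cr^{n}\|\nabla W\|_{L^\infty}^2$ into the oscillation iteration to reach the full exponent $\alpha$, and finally absorb $\|\nabla W\|_{L^\infty}$ by interpolation against $\|W\|_{L^\infty}$. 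Note that this last absorption is precisely the $R^{\alpha}[\nabla_z W]_{\alpha}$ interpolation-and-absorb trick the paper carries out explicitly in \eqref{ineq-tilde-W}--\eqref{ineq-global-estimates}; doing the Campanato decay by hand does not let you avoid it, it only postpones it. As written, your iteration inequality is the step a referee would stop on.
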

Here, the H\"{o}lder semi-norm of $\tilde{\mathbf{f}}=(\widetilde{f}_1, \, \widetilde{f}_2, \, ...\, , \,\widetilde{f}_n)$ is defined as follows:
\begin{equation}\label{def-nablaU-alp}
[\tilde{\mathbf{f}}]_{\alp,\, Q}:=\max\limits_{1\leq i \leq n}[\widetilde{f}_i]_{\alp, \, Q}\quad \text{and} \quad [\widetilde{f}_i]_{\alp, \, Q}=\sup_{x, \, y\in Q}\frac{|\widetilde{f}_i(x)-\widetilde{f}_i(y)|}{|x-y|^\alp}.
\end{equation}

In order to prove Theorem \ref{thm-interior}, we also need
\begin{lemma}\label{lemma-m-1-to-2} 
Under the hypotheses of Theorem \ref{thm-interior}, let $v_{i}\in{H}^1(\Omega)$ $(i=0,1,2)$ be the
weak solutions of \eqref{equ_v1} and \eqref{equ_v3} respectively. Then
\begin{align}
&|\nabla(v_{1}+v_{2})(x)|\leq\,C,\qquad\,x\in\Omega; \label{v1+v2_bounded1}\\
&|\nabla{v}_{0}(x)|\leq C\|\varphi\|_{L^{\infty}(\partial D)},\quad\,x\in\Omega; \label{nabla_v3}
\end{align}
and
\begin{align}
& |C_i|\leq C, \quad \text{for} \quad i=1, \, 2,  \label{esti-C12}\\
&  |C_1-C_2|\leq C\rho_{n,\,\alp}(\va)\cdot\|\varphi\|_{C^{1, \, \alp}(\ptl D)},  \label{esti-c1-c2}
\end{align}
where $C$ is independent of $\va$.
\end{lemma}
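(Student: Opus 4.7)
\medskip
\noindent\emph{Proof plan.} I would split the four bounds into two groups: the pointwise gradient estimates \eqref{v1+v2_bounded1}--\eqref{nabla_v3}, which I handle by auxiliary-function decompositions in the spirit of the $\bar u_1$ construction of Section~2, and the constant estimates \eqref{esti-C12}--\eqref{esti-c1-c2}, which I deduce algebraically from the no-flux conditions.

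For \eqref{v1+v2_bounded1}, set $\tilde v:=1-v_1-v_2$, which is harmonic in $\Omega$, vanishes on $\partial D_1\cup\partial D_2$, and equals $1$ on $\partial D$. I would choose an $\varepsilon$-independent cut-off $\eta\in C^{1,\alpha}(\overline\Omega)$ with $\eta\equiv 0$ on a fixed neighborhood of $D_1^0\cup D_2^0$ (so in particular $\eta\equiv 0$ on $\overline{\Omega_{R_1}}$ for small $\varepsilon$), $\eta\equiv 1$ on $\partial D$, and $\|\eta\|_{C^{1,\alpha}(\overline\Omega)}\leq C$. Then $\phi:=\tilde v-\eta$ satisfies $-\Delta\phi=\Delta\eta\in L^\infty(\Omega)$, $\phi|_{\partial\Omega}=0$, and $\Delta\phi\equiv 0$ on $\Omega_{R_1}$. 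On $\Omega\setminus\Omega_{R_1/2}$, which is uniformly $C^{1,\alpha}$ in $\varepsilon$, standard Schauder estimates give $\|\phi\|_{C^1}\leq C$. On $\Omega_{R_1}$, $\phi$ is harmonic, vanishes on the top and bottom graphs, and has $C^{1,\alpha}$ trace of uniform norm on $\{|x'|=R_1\}$; applying Theorem~\ref{lem-global-C1alp-estimates} together with the Campanato/De Giorgi--Nash iteration of Proposition~\ref{prop-interior}, but now with vanishing right-hand side, yields $|\nabla\phi|\leq C$ throughout $\Omega_{R_1}$. Combined with $|\nabla\eta|\leq C$ this establishes \eqref{v1+v2_bounded1}. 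The bound \eqref{nabla_v3} follows by the identical argument with $\eta$ replaced by an $\varepsilon$-independent $C^{1,\alpha}$ extension $\bar\varphi$ of $\varphi|_{\partial D}$ supported away from the inclusions, with $\|\bar\varphi\|_{L^\infty(\overline\Omega)}\leq C\|\varphi\|_{L^\infty(\partial D)}$; such an extension exists because $\dist(D_i^0,\partial D)>0$, and homogeneity tracks this norm through every step of the estimate.

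For the constants I would substitute \eqref{decomposition1_u} into $\int_{\partial D_i}\partial_\nu u|_+=0$ and apply Green's identity to produce the symmetric $2\times 2$ system
\begin{equation*}
A_{i1}C_1+A_{i2}C_2=b_i,\qquad
A_{ij}:=\int_\Omega\nabla v_i\cdot\nabla v_j\,dx,\qquad
b_i:=-\int_{\partial D}\varphi\,\partial_\nu v_i\,dS,\qquad i=1,2.
\end{equation*}
The pointwise bounds \eqref{nabla-v-i0} and their analogues for $v_2$ give $A_{11},A_{22}\asymp\rho_{n,\alpha}^{-1}(\varepsilon)$, with the lower bound supplied by Proposition~\ref{prop-interior}. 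Testing the inner product against $v_1+v_2$ and invoking \eqref{v1+v2_bounded1} yields $A_{i1}+A_{i2}=\int_\Omega\nabla v_i\cdot\nabla(v_1+v_2)=O(1)$, so $A_{12}=-A_{11}+O(1)$. Standard boundary $C^1$ regularity of $v_i$ near $\partial D$, which is far from the narrow region, gives $|b_i|\leq C\|\varphi\|_{L^\infty(\partial D)}$. Cramer's rule then produces $\det A=A_{11}A_{22}-A_{12}^2\asymp\rho_{n,\alpha}^{-1}(\varepsilon)$, whence $|C_i|\leq C\|\varphi\|_{L^\infty(\partial D)}\leq C$; subtracting the two equations yields $(A_{11}-A_{12})(C_1-C_2)=b_1-b_2$ with $A_{11}-A_{12}\asymp\rho_{n,\alpha}^{-1}(\varepsilon)$, so $|C_1-C_2|\leq C\rho_{n,\alpha}(\varepsilon)\|\varphi\|_{L^\infty(\partial D)}\leq C\rho_{n,\alpha}(\varepsilon)\|\varphi\|_{C^{1,\alpha}(\partial D)}$, which is \eqref{esti-c1-c2}.

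The main technical point is verifying that the Campanato/De Giorgi--Nash iteration of Proposition~\ref{prop-interior} carries over to $\phi$ on $\Omega_{R_1}$ driven only by its lateral cross-sectional trace, rather than by an explicit auxiliary function such as $\bar u_1$. Since the right-hand side vanishes in the present setting, the argument is actually cleaner than for $w=v_1-\bar u_1$: the energy decay is dictated entirely by the uniformly $C^{1,\alpha}$ boundary data, so the iteration closes with an $\varepsilon$-independent constant and the $|x'|^{\alpha}$-type singularity visible in \eqref{nabla-w-i0} (which was produced by $\nabla\bar u_1$) no longer appears.
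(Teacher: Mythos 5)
The skeleton you use is the same as the paper's — decompose $u$, get a $2\times 2$ linear system from the flux conditions, and feed in the pointwise bounds of Proposition \ref{prop-interior} — but several of your intermediate claims need repair, and the paper's route is noticeably lighter.

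For \eqref{v1+v2_bounded1}, the paper simply observes that $v_1+v_2-1$ solves a Dirichlet problem in a narrow region with zero boundary data on both sheets and cites Theorem~1.1 of \cite{llby}; for \eqref{esti-C12} it just applies the $H^1$ trace inequality to the constant trace $u|_{\partial D_i}=C_i$, using $\|u\|_{H^1(\Omega)}\le C$. Your self-contained reconstruction of the first via $\tilde v=1-v_1-v_2$, a cutoff $\eta$, and the zero-right-hand-side version of the iteration in Proposition~\ref{prop-interior} is workable (with RHS $=0$ the iteration inequality becomes $F(t_i)\le\frac14F(t_{i+1})$, and the total energy bound for $\tilde v$ follows from energy minimality against the comparison function $\eta$), but you should spell out the total-energy step rather than assert "the iteration closes." Your Cramer's-rule route to \eqref{esti-C12} is strictly more elaborate than the paper's trace argument.

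The genuine gaps are in the constant estimates. First, "subtracting the two equations yields $(A_{11}-A_{12})(C_1-C_2)=b_1-b_2$" is not an identity: the difference of $A_{11}C_1+A_{12}C_2=b_1$ and $A_{12}C_1+A_{22}C_2=b_2$ reads $(A_{11}-A_{12})C_1-(A_{22}-A_{12})C_2=b_1-b_2$, and the two coefficients differ by $A_{11}-A_{22}$, which is $O(1)$ but not zero in general. Your bound still comes out because $|C_2|\le C$ and $A_{11}-A_{22}=O(1)$, so the extra term $(A_{11}-A_{22})C_2$ is $O(1)$ — but the step must be written that way. This is essentially the algebraic form the paper uses directly: $|C_1-C_2|\le\bigl(|C_2|\,|a_{11}+a_{12}|+|b_1|\bigr)/|a_{11}|$ sidesteps Cramer's rule and the subtraction entirely. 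Second, your claim $\det A=A_{11}A_{22}-A_{12}^2\asymp\rho_{n,\alpha}^{-1}(\varepsilon)$ needs a lower bound, and $A_{i1}+A_{i2}=O(1)$ supplies only upper bounds on $A_{11}+A_{12}$ and $A_{12}+A_{22}$. In fact one has $\det A=A_{11}(s+t)-s^2$ with $s=A_{11}+A_{12}$, $t=A_{12}+A_{22}$, $s+t=\int_\Omega|\nabla(v_1+v_2)|^2$, so you must also establish $s+t\ge 1/C$; this is the content of Lemma~2.4 of \cite{bly1}, which the paper cites as \eqref{a11+a12} and which you do not justify. Both points are fillable, but as written the argument has holes where the paper has citations.
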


We are in position to prove Theorem \ref{thm-interior} by using Proposition \ref{prop-interior} and Lemma \ref{lemma-m-1-to-2}.

\begin{proof}[Proof of Theorem \ref{thm-interior}]
It follows from \eqref{decomposition_u2}, \eqref{nabla-v-i0} and Lemma \ref{lemma-m-1-to-2} that for $x\in\Omega_{R_{1}}$,
\begin{align*}
|\nabla{u}(x)|\leq&|C_{1}-C_{2}|\cdot|\nabla{v}_{1}(x)|+C+C\|\varphi\|_{C^{1, \alp}(\partial D)}\\
\leq&  \frac{C\rho_{n,\,\alp}(\va)}{\varepsilon+|x^\prime|^{1+\alp}}\cdot\|\varphi\|_{C^{1, \alp}(\partial D)}.
\end{align*}
For the rest part, it is easy to see from the standard elliptic theories and Lemma \ref{lemma-m-1-to-2} that
\begin{equation*}
\|\nabla{u}\|_{L^{\infty}(\Omega\setminus\Omega_{R_{1}})}\leq C \|\varphi\|_{C^{1, \, \alp}(\ptl D)}.
\end{equation*}
The proof of Theorem \ref{thm-interior} is completed. 
\end{proof}

When $\partial{D}_{1}$ and $\partial{D}_{2}$ are of $C^{2,\alpha}$, the auxiliary function is the same as before, still denoting $\bar{u}_1$, but good enough to take twice derivative. From the assumptions on $h_{1}$ and $h_{2}$, \eqref{h1h2}--\eqref{h1h3}, a direct calculation gives
\begin{equation*}
\partial_{i}\bar{u}_{1}(x)=0,~~\,i=1,\cdots,n-1,\qquad
\partial_{n}\bar{u}_{1}(x)=\frac{1}{\varepsilon},\qquad\qquad~x\in\Sigma,
\end{equation*}
and
\begin{equation}\label{nablau_bar2}
\left|\partial_{i}\bar{u}_{1}(x)\right|\leq\frac{Cd_{\Sigma^\prime}(x')}{\varepsilon+d^{2}_{\Sigma^\prime}(x')},\qquad
\partial_{n}\bar{u}_{1}(x)=\frac{1}{\delta(x')},\qquad\qquad~x\in\Omega_{R_{1}}\setminus\Sigma,
\end{equation}
where $  d_{\Sigma^\prime}(x^\prime):=\text{dist}(x^\prime, \, \Sigma^\prime)$ and $\delta(x')=\va+h_1(x')-h_2(x')$. 

\begin{prop}\label{prop1}
Under the hypotheses of Theorem \ref{thm1}, let $v_{1} \in{H}^1(\Omega)$ be the
weak solution of \eqref{equ_v1}. Then  
\begin{equation}\label{nabla_w_i0}
\|\nabla(v_{1}-\bar{u}_{1})\|_{L^{\infty}(\Omega_{R_{1}})}\leq\,C,
\end{equation}
where $C$ is independent of $\va$. As a consequence, for small $\va$, we have
\begin{equation}\label{v1-bounded1}
\frac{1}{C(\varepsilon+d^{2}_{\Sigma^\prime}(x'))}\leq|\nabla v_{1}(x)|\leq\,\frac{C}{\varepsilon+d^{2}_{\Sigma^\prime}(x')},\quad x\in\Omega_{R_{1}}.
\end{equation}
\end{prop}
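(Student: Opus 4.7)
Set $w := v_1 - \bar u_1$, so that $-\Delta w = \Delta \bar u_1$ in $\Omega$ and $w = 0$ on $\ptl \Omega$. The crucial new feature compared with Proposition~\ref{prop-interior} is that $h_1, h_2 \in C^{2,\alp}$, so $\bar u_1$ is twice continuously differentiable and the source $\Delta \bar u_1$ is a bona fide function. A direct calculation from the explicit form of $\bar u_1$, using \eqref{h1---h2}, \eqref{h1h3} and \eqref{nablau_bar2}, gives the pointwise bound $|\Delta \bar u_1(x)| \le C/\delta(x')$, where $\delta(x') = \va + h_1(x') - h_2(x')$; the dangerous terms of the form $(\ptl_i h_j)(\ptl_i h_k)/\delta^2$ are tamed because $|\nabla_{x'} h_j| \le C\, d_{\Sigma^\prime}(x')$ in $B'_{R_1}$, so their contribution is of order $d_{\Sigma^\prime}^{2}/\delta^2 \le C/\delta$. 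This pointwise control on $\Delta \bar u_1$ is what unlocks the classical $W^{2,p}$ theory.

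\textbf{Energy iteration.} Fix $z' \in B'_{R_1}$ and write $\delta = \delta(z')$. For $0 < s \le R_1$ set $Q_s(z') := \Omega \cap \{|x' - z'| < s\}$ and $F(s) := \int_{Q_s(z')} |\nabla w|^2\, dx$. Testing $-\Delta w = \Delta \bar u_1$ against $w \eta^2$ with a radial cutoff $\eta$ equal to $1$ on $Q_t(z')$ and supported in $Q_s(z')$, and combining Cauchy--Schwarz on the source with the Poincar\'e inequality in the thin $x_n$-direction (valid since $w$ vanishes on both top and bottom of the gap), one derives a Caccioppoli-type recursion controlling $F(t)$ by $(\delta/(s-t))^2 F(s)$ plus a source term of the expected size. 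This is the energy iteration developed in \cite{bll}. Starting from the universal $L^\infty$ bound on $w$ (hence an $L^2$ energy bound at scale $R_1$) and iterating down to the natural geometric scale $s \sim \sqrt{\delta}$ yields the dimensionally correct bound $F(\sqrt{\delta}) \le C\,|Q_{\sqrt{\delta}}(z')|$, which is precisely what one would have if $|\nabla w|$ were already bounded.

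\textbf{$W^{2,p}$ rescaling and the pointwise bound.} Perform the anisotropic change of variables $y' = (x'-z')/\sqrt{\delta}$, $y_n = (x_n - z_n^{\ast})/\delta$, with $z_n^{\ast}$ the vertical midpoint of the gap over $z'$, and let $\hat w(y) = w(x)$. The image of $Q_{\sqrt{\delta}}(z')$ is uniformly comparable (independently of $\va$ and $z'$) to a fixed unit cylinder with $C^{2,\alp}$ boundary, the rescaled equation has $L^\infty$-bounded right-hand side, and $\|\nabla_y \hat w\|_{L^2}$ is controlled by the energy bound above. Standard interior and boundary $W^{2,p}$ estimates followed by Sobolev embedding into $C^{1,\gamma}$ give $\|\nabla \hat w\|_{L^\infty} \le C$; scaling back yields $|\nabla w(z)| \le C$ uniformly in $\va$ and $z \in \Omega_{R_1}$, which is \eqref{nabla_w_i0}. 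The two-sided bound \eqref{v1-bounded1} is then immediate from the triangle inequality: by \eqref{nablau_bar2}, $\ptl_n \bar u_1 = 1/\delta(x') \sim 1/(\va + d_{\Sigma^\prime}^{2}(x'))$ dominates both the tangential derivatives $|\ptl_{x'} \bar u_1| \le C d_{\Sigma^\prime}/\delta$ and the $O(1)$ correction $|\nabla w|$, pinching $|\nabla v_1|$ between two constant multiples of $1/(\va + d_{\Sigma^\prime}^{2}(x'))$ for $\va$ small.

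\textbf{Expected main obstacle.} The most delicate point is running the energy iteration uniformly across the transition $\ptl\Sigma'$: for $z' \in \Sigma'$ the gap has constant height $\va$, whereas for $z' \in B'_{R_1} \setminus \overline{\Sigma'}$ the thickness grows quadratically with $d_{\Sigma^\prime}(z')$, so the horizontal scale on which one should iterate changes character across $\ptl \Sigma'$. Assumption \eqref{h1----h2} on the quadratic detachment of $h_1 - h_2$ from $\Sigma'$ is essential to keep the Poincar\'e constant and the admissible ratio $(s-t)/\delta$ controlled uniformly, so that the recursion closes with a single universal constant regardless of whether $z'$ lies in the flat regime or the strictly convex regime.
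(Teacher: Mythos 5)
The structure of your argument (decomposition $w = v_1 - \bar u_1$, the pointwise bound $|\Delta \bar u_1| \le C/\delta(x')$, a Caccioppoli iteration with the vertical Poincar\'e inequality, then rescaling to a unit domain and applying $W^{2,p}$) is exactly the strategy the paper uses, and Steps 1 and 2 of your outline are essentially right. The genuine gap is in Step 3: you rescale \emph{anisotropically}, $y' = (x'-z')/\sqrt{\delta}$, $y_n = (x_n - z_n^\ast)/\delta$. Under this change of variables $\Delta_x$ becomes $\delta^{-1}\Delta_{y'} + \delta^{-2}\partial_{y_n}^2$, i.e.\ after normalizing, the rescaled operator is $\delta\,\Delta_{y'} + \partial_{y_n}^2$, a \emph{degenerate} elliptic operator whose ellipticity ratio is $1/\delta \to \infty$. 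Standard interior and boundary $W^{2,p}$ (and Sobolev$\to C^{1,\gamma}$) estimates are not uniform in the ellipticity ratio, so they do not produce the $\va$-independent constant you assert. What the paper does instead is the \emph{isotropic} rescaling $x' - z' = \delta y'$, $x_n = \delta y_n$, sending $\widehat\Omega_{\delta}(z')$ (horizontal radius $\delta$, not $\sqrt\delta$) to the unit-size domain $Q_1$. Under this map the Laplacian is preserved up to the overall factor $\delta^{-2}$, the source becomes $\delta^2\Delta_x\bar u_1 = O(\delta)$, and the top/bottom boundaries are uniformly $C^{2,\alp}$, so genuine Calder\'on--Zygmund $W^{2,p}$ estimates apply; scaling back then yields $\|\nabla w\|_\infty \le C$. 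Correspondingly, the energy iteration should be run down to the small scale $\delta$ (giving $F(\delta) \le C\delta^n$), not to $\sqrt{\delta}$.

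Two smaller remarks. First, your ``expected main obstacle'' about matching the flat and strictly convex regimes is resolved in the paper by a single unified choice of iteration length $k = \bigl[\max\{\sqrt{\va},|z'|\}/(4c_1\delta(z'))\bigr]$; no case split is needed. Second, even if one fixed the analytic issue by using the isotropic rescaling, your target $F(\sqrt{\delta}) \le C|Q_{\sqrt{\delta}}|$ would be the wrong normalization to feed into the rescaled $W^{2,p}$ estimate — the correct input is the local energy $\|\nabla w\|_{L^2(\widehat\Omega_\delta)} \le C\delta^{n/2}$, which after the isotropic scaling gives $\|\nabla_y \hat w\|_{L^2(Q_1)} \le C\delta$ and hence $\|\nabla_x w\|_\infty \le C$.
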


Instead of Lemma \ref{lemma-m-1-to-2}, we have
\begin{lemma}\label{lemma-nabla-v1+v2}
 Under the hypotheses of Theorem \ref{thm1}, let $v_{i}\in{H}^1(\Omega)$ $(i=0,1,2)$ be the
 weak solutions of \eqref{equ_v1}--\eqref{equ_v3}, respectively. Then \eqref{v1+v2_bounded1}, \eqref{nabla_v3} and \eqref{esti-C12} hold, and instead of \eqref{esti-c1-c2}, we have
 \begin{equation}\label{esti-c1-and-c2}
  |C_1-C_2| \leq \frac{C\va}{|\Sigma^\prime|+\rho_n^{-1}(\va)\va}\cdot \|\varphi\|_{L^\infty(\ptl D)}, 
 \end{equation}
 where $\rho_{n}(\va)$ is defined in \eqref{def_rhon} and $C$ is independent of $\va$.
 \end{lemma}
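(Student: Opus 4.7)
The proof has three pieces: the gradient bounds \eqref{v1+v2_bounded1} and \eqref{nabla_v3}; setting up the symmetric linear system for $C_1,C_2$; and deriving the determinant estimate that yields \eqref{esti-C12}--\eqref{esti-c1-and-c2}. I plan to handle them in this order.

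For \eqref{v1+v2_bounded1} I would introduce the symmetric counterpart $\bar u_2$ of $\bar u_1$, with $\bar u_2(x',x_n) = (\va/2 + h_1(x') - x_n)/\delta(x')$ in $\Omega_{R_1}$. Direct addition with \eqref{ubar} yields the crucial identity $\bar u_1 + \bar u_2 \equiv 1$ in $\Omega_{R_1}$, so $\nabla(\bar u_1 + \bar u_2) = 0$ there. Applying Proposition \ref{prop1} and its symmetric analogue, $|\nabla(v_i - \bar u_i)| \leq C$ in $\Omega_{R_1}$ for $i=1,2$, hence
\[
\nabla(v_1+v_2) = \nabla(v_1 - \bar u_1) + \nabla(v_2 - \bar u_2) \quad \text{in } \Omega_{R_1},
\]
bounded by $2C$; outside $\Omega_{R_1}$ standard Schauder estimates apply uniformly in $\va$. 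For \eqref{nabla_v3}, since $v_0$ vanishes on both the top and bottom of the narrow region I would use the nonnegative product barrier $\phi(x',x_n) := (x_n + \va/2 - h_2(x'))(\va/2 + h_1(x') - x_n)$, which satisfies $\phi = 0$ on these boundaries and $-\Delta \phi \geq c_0 > 0$ in $\Omega_{R_1}$ (the term $\partial_{x_n}^2 \phi = -2$ dominates the $h_i$-corrections). A maximum-principle comparison of $\pm v_0$ with an appropriate multiple $M\phi$ of order $\|\varphi\|_{L^\infty(\partial D)}$ gives $|v_0(x)| \leq C\|\varphi\|_{L^\infty(\partial D)}\,\delta^2(x')$, and then interior gradient estimates for harmonic functions on a ball of radius $\sim\delta(x')/4$ conclude $|\nabla v_0| \leq C\|\varphi\|_{L^\infty(\partial D)}$.

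For the constants $C_1,C_2$, the zero-flux condition $\int_{\partial D_i}\partial_\nu u|_+ = 0$ with test functions $v_1,v_2$ yields the symmetric system
\[
C_1 a_{11} + C_2 a_{12} = -b_1, \qquad C_1 a_{12} + C_2 a_{22} = -b_2,
\]
with $a_{ij}:=\int_\Omega\nabla v_i\cdot\nabla v_j$ and $b_i:=\int_\Omega\nabla v_0\cdot\nabla v_i$. The key algebraic step is to re-express things in terms of the bounded quantity $v_1+v_2$: setting $q := \int_\Omega|\nabla(v_1+v_2)|^2$ and $r := \int_\Omega\nabla v_1\cdot\nabla(v_1+v_2)$, the identity $\nabla v_2 = \nabla(v_1+v_2) - \nabla v_1$ gives $a_{12} = r - a_{11}$, $a_{22} = a_{11} - 2r + q$, and therefore the clean factorization
\[
a_{11}a_{22} - a_{12}^2 = a_{11}q - r^2, \qquad C_1 - C_2 = \frac{-b_1 q + (b_1+b_2)r}{a_{11}q - r^2}.
\]
By integration by parts against $\partial D$, both $r = -\int_{\partial D}\partial_n v_1$ and $b_i = \int_{\partial D}\varphi\,\partial_n v_i$ reduce to boundary integrals over $\partial D$, controlled by $C$ and $C\|\varphi\|_{L^\infty(\partial D)}$ respectively via standard elliptic estimates away from the narrow region; the first piece gives $q \leq C$, and the Dirichlet principle applied to the non-constant function $1-(v_1+v_2)$ forces $q \geq c_0 > 0$. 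The decisive lower bound is $a_{11} \geq c(|\Sigma'|/\va + 1/\rho_n(\va))$, obtained by integrating the pointwise lower bound in \eqref{v1-bounded1}: the flat cylinder $\Sigma' \times (-\va/2,\va/2)$ contributes $c|\Sigma'|/\va$, while the curved part of $\Omega_{R_1}$, after $x_n$-integration, reduces to $\int_{B'_{R_1}\setminus\Sigma'} dx'/(\va + d_{\Sigma'}^2(x'))$, which a tubular-coordinate computation around $\partial\Sigma'$ resolves to the dimension-dependent $\rho_n(\va)^{-1}$ asymptotics of \eqref{def_rhon}. Combining, $a_{11}q - r^2 \geq c_0 a_{11} - C \geq c(|\Sigma'|/\va + 1/\rho_n(\va))$ for small $\va$, and substituting into the explicit formula for $C_1-C_2$ yields \eqref{esti-c1-and-c2}; an analogous explicit formula for $C_i$, in which the growing factor $a_{11}$ cancels against the denominator, yields $|C_i| \leq C$.

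The main obstacle is the sharp lower bound on $a_{11}$: although the pointwise lower bound \eqref{v1-bounded1} is already in hand from Proposition \ref{prop1}, extracting the correct mixed asymptotics $|\Sigma'|/\va + 1/\rho_n(\va)$ requires splitting the integration precisely across the flat cylinder and the tubular neighborhood of $\partial\Sigma'$, whose contributions add to produce the rate matching the forward bound in Theorem \ref{thm1}. Once this is done, the rest of the argument reduces to algebraic manipulation of the explicit inverse of a $2\times 2$ matrix.
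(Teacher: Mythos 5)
Your proposal is correct, and its core coincides with the paper's: the decisive ingredient in both arguments is the sharp two--sided bound
\[
\tfrac{1}{C}\bigl(\tfrac{|\Sigma'|}{\varepsilon}+\tfrac{1}{\rho_n(\va)}\bigr)\le -a_{11}\le C\bigl(\tfrac{|\Sigma'|}{\varepsilon}+\tfrac{1}{\rho_n(\va)}\bigr),
\]
which you obtain, exactly as in the paper's Lemma \ref{lemma_a11}, by integrating the pointwise bound \eqref{v1-bounded1}, splitting the integral over the flat cylinder $\Sigma'\times(-\va/2,\va/2)$ and a tubular neighborhood of $\partial\Sigma'$.  The differences lie in how the surrounding estimates are assembled, and they are real.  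The paper proves $|C_i|\le C$ first, by trace embedding and the uniform energy bound on $u$, and then deduces $|C_1-C_2|$ from the single relation
$|C_1-C_2|\le\bigl(|C_2|\,|a_{11}+a_{12}|+|b_1|\bigr)/|a_{11}|$ together with lemma 2.4 of \cite{bly1} (which controls $a_{11}+a_{12}$ and $b_i$).  You instead solve the full $2\times 2$ symmetric system by Cramer's rule after passing to the variables $q=\int|\nabla(v_1+v_2)|^2$ and $r=\int\nabla v_1\cdot\nabla(v_1+v_2)$; the clean factorization $a_{11}a_{22}-a_{12}^2=a_{11}q-r^2$ then gives $|C_1-C_2|$ and $|C_i|$ in one stroke, with $|a_{11}|$ appearing explicitly in the denominator and the boundedness of $|C_i|$ dropping out as a cancellation rather than as a separate trace--embedding input.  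Your argument therefore does not need the trace embedding step, but it does require the uniform lower bound $q\ge c_0>0$; this is fine (monotonicity of capacity: any admissible competitor vanishes on a fixed ball $B_\rho\subset D_1^0$, so $q\ge\operatorname{cap}(B_\rho,D)>0$), but it is worth stating explicitly, since the raw strict positivity of the Gram determinant is not enough uniformity.  For \eqref{v1+v2_bounded1} and \eqref{nabla_v3} the paper simply cites theorem 1.1 of \cite{llby}; your replacement via $\bar u_1+\bar u_2\equiv 1$ and Proposition \ref{prop1} is valid, and your barrier $\phi=(x_n+\va/2-h_2)(\va/2+h_1-x_n)$ for $v_0$ also works, although you should note that $-\Delta\phi\ge c_0>0$ uses the $C^2$ control of $h_i$ and may require shrinking $R_1$ (or augmenting $\phi$) to dominate the $\nabla'h_1\cdot\nabla'h_2$ term, and that the comparison on the lateral boundary $\{|x'|=R_1\}$ is handled by choosing the multiplier $M$ proportional to $\|\varphi\|_{L^\infty(\partial D)}/\phi|_{\{|x'|=R_1\}}$, with constant depending on $R_1$ but not on $\va$.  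None of these points is a gap; they are merely details you left at the sketch level.
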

Combining with these estimates above, the proof of Theorem \ref{thm1} follows that of Theorem \ref{thm-interior}.

\vspace{0.5cm}

\section{Estimates for $C^{1,\alp}$ inclusions and the proof of Proposition \ref{prop-interior} }\label{sec3}

This section is devoted to the proof of Proposition \ref{prop-interior}. Because $\ptl D_1$ and $\ptl D_2$ are only $C^{1, \alp}$, we adapt the iteration technique developed in \cite{bll} to allow us to apply Theorem \ref{lem-global-C1alp-estimates}.  In this end, we define
 \begin{equation*}
\widehat{\Omega}_{s}(z'):=\left\{x\in \mathbb{R}^{n}:-\frac{\va}{2}+h_{2}(x')<x_{n}<\frac{\va}{2}+h_{1}(x'),~|x'-z'|<{s}~\right\},
\end{equation*}
for $0<s<\frac{1}{2\kappa_{1}}\delta(z')^{1/(1+\alpha)}\leq R_1$, $\kappa_{1}$ is defined in \eqref{h1h_convex2}. We first calculate the semi-norm
\begin{equation}\label{ineq-semi-holder-norm}
[\nabla \bar{u}_1]_{\alp, \, \widehat{\Omega}_{s}(z^\prime) }\leq C\delta(z^\prime)^{-\frac{2+\alp}{1+\alp}}s^{1-\alp}+C\delta(z^\prime)^{-\frac{1+\alp+\alp^2}{1+\alp}}.
\end{equation}
where $\delta(z')=\va + h_1(z^\prime)-h_2(z^\prime)$ and $s\leq C\delta(z^\prime)$.

Indeed, we first note that for any $(x^\prime, x_n)\in \widehat{\Omega}_{s}(z^\prime)$, $s\leq\delta(z')$,
\begin{equation}\label{ineq-x-prime}
|x^\prime|\leq |x^\prime-z^\prime|+|z^\prime|<s+|z^\prime|\leq C\delta(z')^{1/(1+\alpha)}.
\end{equation}
This together with mean value theorem  and \eqref{h1h_convex2} implies that for any $x, \bar{x}\in \widehat{\Omega}_{s}(z^\prime)$ with $x^\prime\neq \bar{x}^\prime$, 
 \begin{equation}\label{ineq-meanvalue}
|h_i(x^\prime)-h_i(\bar{x}^\prime)|=|\nabla h_{i}(x'_{\theta_{i}})\|x^\prime-\bar{x}^\prime|\leq C\delta(z')^{\frac{\alp}{1+\alp}}|x^\prime-\bar{x}^\prime|, \quad i=1,\,2,
 \end{equation}
 and
 \begin{equation}\label{ineq-meanvalue-1}
\va + (h_1-h_2)(\bar{x}^\prime)\geq\delta(z')-C\delta(z')^{\frac{\alp}{1+\alp}}s\geq\frac{1}{2}\delta(z'),\quad\va + (h_1-h_2)(x^\prime)\geq\frac{1}{2}\delta(z').
 \end{equation}
Then, for
\begin{equation*}
\ptl_n\bar{u}_1(x)=\frac{1}{\va+h_1(x^\prime)-h_2(x^\prime)},
\end{equation*}
we have
\begin{align}\label{ineq-xn-alp}
\frac{|\partial_{n}\bar{u}_{1}(x)-\partial_{n}\bar{u}_{1}(\bar{x})|}{|x-\bar{x}|^\alp}
\leq \frac{C\delta(z^\prime)^{\frac{\alp}{1+\alp}} s^{1-\alp}}{\delta(z^\prime)^2}\leq\,C\delta(z^\prime)^{-\frac{2+\alp}{1+\alp}}s^{1-\alp}.
\end{align}
While, for $i= 1, \, 2, \, \cdots, \, n-1$,
\begin{align*}
\ptl_i\bar{u}_1(x)& =\frac{-\partial_{i}h_2(x^\prime)}{\delta(x^\prime)}+\frac{\big(x_n-h_2(x^\prime)+\va/2 \big)\big(\partial_{i}h_2(x^\prime)-\partial_{i}h_1(x^\prime)\big)}{\delta^2(x^\prime)}\\
:&=\Phi_1(x)+\Phi_2(x),
\end{align*}
 we have
 \begin{align*}
 \frac{|\ptl_i\bar{u}_1(x)-\ptl_i \bar{u}_1(\bar{x})|}{|x-\bar{x}|^\alp}\leq \frac{|\Phi_1(x)-\Phi_1(\bar{x})|}{|x-\bar{x}|^\alp}+\frac{|\Phi_2(x)-\Phi_2(\bar{x})|}{|x-\bar{x}|^\alp}:=\mathrm{I}_1+\mathrm{I}_2.
 \end{align*}
By virtue of \eqref{h1h_convex2} and \eqref{ineq-x-prime}--\eqref{ineq-meanvalue-1}, a direct calculation yields
\begin{align*}
\mathrm{I}_1\leq\frac{C}{\delta(z^\prime)}+
\frac{\delta(z^\prime)^{\frac{\alp}{1+\alp}} s^{1-\alp}}{\delta(z^\prime)^2}\leq
 C\delta(z^\prime)^{-1}+C\delta(z^\prime)^{-\frac{2+\alp}{1+\alp}}s^{1-\alp},
\end{align*}
and
\begin{equation*}
\mathrm{I}_2\leq C\delta(z^\prime)^{-1-\frac{1}{1+\alp}}s^{1-\alp}+C\delta(z^\prime)^{-\alp-\frac{1}{1+\alp}}.
\end{equation*}
Noting that $\alp+\frac{1}{1+\alp}>1$, we have 
\begin{equation}\label{ineq-xi-alp}
\frac{|\partial_{i}\bar{u}_{1}(x)-\partial_{i}\bar{u}_{1}(\bar{x})|}{|x-\bar{x}|^\alp}  \leq \mathrm{I}_1+\mathrm{I}_2\leq C\delta(z^\prime)^{-\frac{2+\alp}{1+\alp}}s^{1-\alp}+C\delta(z^\prime)^{-\alp-\frac{1}{1+\alp}}.
\end{equation}
Thus, \eqref{ineq-semi-holder-norm} immediately follows from \eqref{ineq-xn-alp} and \eqref{ineq-xi-alp}.

\subsection{Proof of Proposition \ref{prop-interior}} 

\begin{proof}[Proof of Proposition \ref{prop-interior}]
Recall $w$ satisfies
\begin{equation}\label{w20'}
\left\{
\begin{aligned}
-\Delta{w}&=\mbox{div}(\nabla\bar{u}_{1})\quad&\mbox{in}~\Omega,\quad\\
w&=0\quad&\mbox{on}~\partial \Omega.
\end{aligned}\right.
\end{equation}
Since 
\begin{equation}\label{nabla-baru-out}
|\bar{u}_1|+|\nabla \bar{u}_1|\leq C\quad \text{in}~\Omega\setminus \Omega_{R_1/2},
\end{equation}
it follows from the standard elliptic theories that
\begin{equation}\label{nabla-w-out}
|w|+\left|\nabla{w}\right|\leq\,C,
\quad\mbox{in}~~ \Omega\setminus\Omega_{R_{1}}.
\end{equation}
Thus, it is clear from \eqref{ubar-out} that
\begin{equation}\label{ineq-nabla-v-out}
|\nabla v_1(x)|\leq C, \qquad x\in \Omega \setminus \Omega_{R_1}.
\end{equation}
In order to estimate $\|\nabla v_1\|_{L^\infty (\Omega_{R_1})}$,  we divide the proof into three steps.

{\bf STEP 1. The boundedness of the total energy:} 
\begin{equation}\label{energy-nabla-w11}
\int_{\Omega}|\nabla {w}|^2\ dx\leq C.
\end{equation}
In fact, noting that $\ptl_{nn} \bar{u}_1=0$ in $\Omega_{R_1}$, we multiply \eqref{w20'} by $w$, make use of integration by parts and Young's inequality, to obtain
\begin{align*}
\int_{\Omega}|\nabla w|^2dx&=\int_{\Omega\setminus\Omega_{R_1}}\Div (\nabla \bar{u}_1)\, w\,dx+\int_{\Omega_{R_1}}\Div (\nabla\bar{u}_1) \,w\,dx\\
&\leq \int_{\Omega\setminus\Omega_{R_1}}|\nabla\bar{u}_1|\,|\nabla w|dx+\int_{\ptl \Omega_{R_1}\setminus \ptl \Omega}|\nabla \bar{u}_1|\,|w|ds+\frac{1}{2}\int_{\Omega_{R_1}}|\nabla w|^2\,dx\\
&\quad+\frac{1}{2}\int_{\Omega_{R_1}}\sum_{i=1}^{n-1}|\ptl_i\bar{u}_1|^2\,dx+\int_{\ptl\Omega_{R_1}\setminus\ptl \Omega}\sum_{i=1}^{n-1}|\ptl_i\bar{u}_1|\,|w|\,ds.
\end{align*}
Then, using \eqref{nabla-baru-out} and \eqref{nabla-w-out}, one has
\begin{equation*}
\int_{\Omega}|\nabla {w}|^2\ dx\leq   \int_{\Omega_{R_1}} \sum_{i=1}^{n-1} |\ptl_i \bar{u}_1|^2 \ dx+C.
\end{equation*}
For the first term on the right hand side, by using \eqref{nablau_bar-interior}, we have 
\begin{align*}
\int_{\Omega_{R_1}} \sum_{i=1}^{n-1} |\ptl_i \bar{u}_1|^2 \ dx &\leq \int_{|x^\prime|\leq R_1}\int_{-\frac{\va}{2}+h_2(x^\prime)}^{\frac{\va}{2}+h_1(x^\prime)}\frac{C|x^\prime|^{2\alp}}{(\va+|x^\prime|^{1+\alp})^2} \ dx_n dx^\prime\\
&\leq C\int_{0}^{R_1}\frac{r^{n+2\alp-2}}{\va+r^{1+\alp}} \ dr\leq C\int_{0}^{R_1}r^{n+\alp-3} dr \leq C.
\end{align*}
So that \eqref{energy-nabla-w11} is proved.

{\bf STEP 2. The local energy estimates:}
\begin{equation}\label{energy_w11_inomega_z1}
\int_{\widehat{\Omega}_{\delta(z')}(z')}\left|\nabla{w}\right|^{2}\ dx\leq
C\delta(z')^{n-\frac{2}{1+\alp}},
\end{equation}
where $\delta(z')=\va+h_1(z')-h_2(z')$.

Indeed, from \eqref{w20'}, we see that $w$ also satisfies 
\begin{equation}\label{equ-nabla-bar-u}
-\Delta w= \Div (\nabla \bar{u}_1-\mathfrak{a})\quad\mbox{in}~\Omega,
\end{equation} 
for any constant vector $\mathfrak{a}\in \R^n$. For $0<t<s<R_{1}$, let $\eta$ be a cutoff function satisfying 
\begin{align}\label{def-eta-x-prime}
\eta(x^\prime)=\begin{cases}
1 & \text{if} ~ |x'-z'|<t, \\
0 & \text{if} ~ |x'-z'|>s,\\
\end{cases}
\quad\text{and}\quad |\nabla_{x'}\eta(x')|\leq\frac{2}{s-t}.
\end{align}
Multiplying  \eqref{equ-nabla-bar-u} by $\eta^{2}w$ and using integration by parts, one has
\begin{equation}\label{ineq-iteration-w1}
\begin{aligned}
\int_{\widehat{\Omega}_{t}(z')}|\nabla w|^{2}\ dx \leq\,\frac{C}{(s-t)^{2}}\int_{\widehat{\Omega}_{s}(z')}|w|^{2}\ dx+ C\int_{\widehat{\Omega}_{s}(z')}|\nabla \bar{u}_1-\mathfrak{a}|^2 \ dx,
\end{aligned}
\end{equation}
where we take
\[\mathfrak{a}=(\nabla \bar{u}_1)_{\widehat{\Omega}_{s}(z')}:=\frac{1}{|\widehat{\Omega}_{s}(z')|}\int_{\widehat{\Omega}_{s}(z')}\nabla \bar{u}_1(y) \ dy.\]

{\bf Case 1.} For $|z'|\leq \va^{\frac{1}{1+\alp}}$, $0<s<\va^{\frac{1}{1+\alp}}$, then $\va\leq\delta(z')\leq\,C\varepsilon$. By a direct calculation, we have
\begin{align}\label{energy_nabla_w11_square_in}
\int_{\widehat{\Omega}_{s}(z')}|w|^{2}&=\int_{|x'-z'|<s}\int_{-\va/2+h_2}^{\va/2+h_1}\left(\int_{-\frac{\va}{2}+h_2}^{x_{n}}\partial_{ n}w \ dx_{n}\right)^{2}\ dx_{n}dx'\nonumber\\
&\leq C\varepsilon^{2}\int_{\widehat{\Omega}_{s}(z')}|\nabla{w}|^{2} \ dx,
\end{align}
and by the definition of semi-norm $[\cdot]_{\alp, \, \widehat{\Omega}_{s}(z')}$,
\begin{align*}
|\nabla \bar{u}_1-(\nabla \bar{u}_1)_{\widehat{\Omega}_{s}(z')}|&\leq \frac{1}{|\widehat{\Omega}_{s}(z')|}\int_{\widehat{\Omega}_{s}(z')}|\nabla  \bar{u}_1(x)-\nabla \bar{u}_1(y)| \ dy  \\
&\leq \frac{[\nabla \bar{u}_1]_{\alp, \, \widehat{\Omega}_{s}(z')}}{|\widehat{\Omega}_{s}(z')|}\int_{\widehat{\Omega}_{s}(z')} |x-y|^{\alp} \ dy \\
& \leq C [\nabla \bar{u}_1]_{\alp, \, \widehat{\Omega}_{s}(z')} (s^{\alp}+\delta(z')^{\alp}).
\end{align*}
Using \eqref{ineq-semi-holder-norm}, we calculate further
\begin{align}\label{ineq-s-va}
\int_{\widehat{\Omega}_{s}(z')} |\nabla \bar{u}_1-(\nabla \bar{u}_1)_{\widehat{\Omega}_{s}(z')}|^2 \ dx&\leq \frac{Cs^{n+1}}{\va^{1+\frac{2}{1+\alp}}}
+ \frac{Cs^{n-1}}{\va^{\frac{2}{1+\alp}-1}}
+ \frac{Cs^{n+2\alp-1}}{\va^{2\alp+\frac{2}{1+\alp}-1}}
\notag \\ &\quad +\frac{Cs^{n+1-2\alp}}{\va^{1+\frac{2}{1+\alp}-2\alp}}:=G(s).
\end{align}
It follows from \eqref{ineq-iteration-w1}, 
\eqref{energy_nabla_w11_square_in} and \eqref{ineq-s-va}  that 
\begin{equation}\label{ineq-F111_in}
F(t)\leq\,\left(\frac{c_{1}\varepsilon}{s-t}\right)^{2}F(s) +CG(s),\quad \forall~ 0<t<s<\sqrt[m]{\varepsilon},
\end{equation}
here $c_1$ is a fixed constant, and
\begin{equation}\label{denotation}
F(t):=\int_{\widehat{\Omega}_{t}(z')}|\nabla{w}|^{2} \ dx.
\end{equation}
Let $k=\left[\frac{1}{4c_{1}\varepsilon^{\frac{\alp}{1+\alp}}}\right]$ and $t_{i}=\delta+2c_{1}i\varepsilon$, $i=0,1,2,\cdots,k$. It is easy to see from \eqref{ineq-s-va} that
\begin{align*}
G(t_{i+1})\leq C(i+1)^{n+1}\va^{n-\frac{2}{1+\alp}}.
\end{align*}
Taking $s=t_{i+1}$ and $t=t_{i}$ in \eqref{ineq-F111_in}, we have the following iteration formula
$$F(t_{i})\leq\,\frac{1}{4}F(t_{i+1}) +C(i+1)^{n+1}\va^{n-\frac{2}{1+\alp}}.$$
After $k$ iterations, and by virtue of \eqref{energy-nabla-w11}, we have
\[F(t_{0})\leq (\frac{1}{4})^{k}F(t_{k})+C\va^{n-\frac{2}{1+\alp}}\sum_{i=0}^{k-1}(\frac{1}{4})^i(i+1)^{n+1} \leq C\va^{n-\frac{2}{1+\alp}}.\]
This is \eqref{energy_w11_inomega_z1} with $\delta(z')\leq\,C\va$. 

{\bf Case 2.} For $\va^{\frac{1}{1+\alp}}\leq|z'|\leq\,R_{1}$, $0<s<|z'|$, then $|z'|^{1+\alpha}\leq\delta(z')\leq\,C|z'|^{1+\alpha}$. The estimates \eqref{energy_nabla_w11_square_in} and \eqref{ineq-s-va}  become, respectively,
\begin{align}\label{energy_nabla_w11_square}
\int_{\widehat{\Omega}_{s}(z')}|w|^{2}\ dx
\leq&\,C|z'|^{2(1+\alp)}\int_{\widehat{\Omega}_{s}(z')}|\nabla{w}|^{2}\ dx, \quad\mbox{if}~\,0<s<\frac{2}{3}|z'|,
\end{align}
and
\begin{align}\label{ineq-H-z}
\int_{\widehat{\Omega}_{s}(z')} |\nabla \bar{u}_1-(\nabla \bar{u}_1)_{\widehat{\Omega}_{s}(z')}|^2 \ dx&\leq \frac{Cs^{n+1}}{|z^\prime|^{\alp+3}}
+ \frac{Cs^{n-1}}{|z^\prime|^{1-\alp}}
+ \frac{Cs^{n+2\alp-1}}{|z^\prime|^{2\alp^2+\alp+1}}
\notag \\ &\quad +\frac{Cs^{n+1-2\alp}}{|z^\prime|^{3-\alp-2\alp^2}}:=H(s).
\end{align}
In view of \eqref{ineq-iteration-w1}, and \eqref{energy_nabla_w11_square}, estimate \eqref{ineq-F111_in} becomes, 
\begin{equation}\label{ineq-ft-fs}
F(t)\leq\,\left(\frac{c_{2}|z'|^{1+\alp}}{s-t}\right)^{2}F(s)+CH(s),
\quad\forall~0<t<s<\frac{2}{3}|z'|,
\end{equation}
where $c_2$ is another fixed constant.
Let $k=\left[\frac{1}{4c_{2}|z'|^{\alp}}\right]$ and $t_{i}=\delta+2c_{2}i\,|z'|^{1+\alp}$, $i=0,1,2,\cdots,k$. From \eqref{ineq-H-z}, one has
\begin{align*}
H(t_{i+1}) \leq C(i+1)^{n+1}|z^\prime|^{(1+\alp)(n-\frac{2}{1+\alp})}.
\end{align*}
Then, taking $s=t_{i+1}$ and $t=t_{i}$ in \eqref{ineq-ft-fs}, the iteration formula is
$$F(t_{i})\leq\,\frac{1}{4}F(t_{i+1}) +C(i+1)^{n+1}|z^\prime|^{(1+\alp)(n-\frac{2}{1+\alp})}.$$
After $k$ iterations, and using \eqref{energy-nabla-w11} again, 
\begin{equation*}
F(t_{0}) \leq (\frac{1}{4})^{k}F(t_{k})+C|z^\prime|^{(1+\alp)(n-\frac{2}{1+\alp})}\sum_{i=0}^{k-1}(\frac{1}{4})^i(i+1)^{n+1}\leq C|z'|^{(1+\alp)(n-\frac{2}{1+\alp})}.
\end{equation*}
Thus, \eqref{energy_w11_inomega_z1} is proved.

{\bf STEP 3. Rescaling and $L^{\infty}$ estimates of $|\nabla w|$.}

Making the following change of variables on $\widehat{\Omega}_{\delta}(z')$ as in \cite{bll}
\begin{equation}\label{change-var}
\left\{
\begin{aligned}
&x'-z'=\delta y',\\
&x_n=\delta y_n,
\end{aligned}
\right.
\end{equation}
then $\widehat{\Omega}_{\delta}(z')$ becomes $Q_{1}$ of nearly unit size, where
\begin{equation}\label{def-Q1}
Q_r=\left\{y\in \R^n : -\frac{\va}{2\dt} +\frac{1}{\dt}h_2(\dt y^\prime +z^\prime)< y_n < \frac{\va}{2\dt} +\frac{1}{\dt}h_1(\dt y^\prime +z^\prime), \, |y^\prime |< r  \right\},
\end{equation}
for $r\leq1$, and the top and
bottom boundaries
become
\[
\Gamma^+_r=\left\{ y\in \R^n \, : \,
y_n=\frac{\varepsilon}{2\delta}+\frac{1}{\delta}h_{1}(\delta{y}'+z'),\quad|y'|<r\right\},\]
and
\[\Gamma^+_r=\left\{y\in \R^n \, : \,y_n=-\frac{\va}{2\delta}+\frac{1}{\delta}h_{2}(\delta{y}'+z'), \quad |y'|<r \right\}.\]
We denote
\[ \widetilde{w}(y^\prime, y_n):= w(\dt y^\prime + z^\prime , \dt y_n), \quad \widetilde{u}(y^\prime , y_n):=\bar{u}_1(\dt y^\prime +z^\prime, \dt y_n),  \quad (y^\prime, y_n)\in Q_1. \]

From \eqref{w20'}, we see that $\widetilde{w}$ satisfies
\begin{equation}\label{equ-w-divf-Q1}
\left\{ \begin{aligned}
-\Delta \widetilde{w}&=\Div (\nabla \widetilde{u})\quad &\text{in}\quad Q_1, \\
\widetilde{w}&=0 \quad &\text{on} \quad \Gamma_1^{\pm}.
\end{aligned}\right.
\end{equation}
Applying the De Giorgi-Nash estimates for \eqref{equ-w-divf-Q1}, see Lemma \ref{lem-infty-estimates} in the Appendix, we obtain 
\begin{equation}\label{esti-l-infty-poin}
\|\widetilde{w}\|_{L^\infty (Q_{1/2})} \leq C\left(\|\widetilde{w}\|_{L^2( Q_1)}+ [\nabla \widetilde{u}]_{\alp,\, Q_1} \right).
\end{equation}
By using the $C^{1,\alpha}$ estimates, Theorem \ref{lem-global-C1alp-estimates} with $\tilde{\mathbf{f}}= \nabla \widetilde{u}$ on $Q_{1/2}$,  we have
\begin{equation*}\label{esti-C1alp}
 \|\widetilde{w}\|_{C^{1, \, \alp}(Q_{1/4} )} \leq C\left( \|\widetilde{w}\|_{L^\infty( Q_{1/2})} + [\nabla \widetilde{u}]_{\alp,\, Q_{1/2}} \right)\leq C\left(\|\widetilde{w}\|_{L^2( Q_1)}+ [\nabla \widetilde{u}]_{\alp,\, Q_1} \right).
\end{equation*}
Combining with the  Poincar\'{e} inequality
$$\|\widetilde{w}\|_{L^2( Q_1)}\leq\,C\|\nabla \widetilde{w}\|_{L^2( Q_1)},$$
one has
\[\|\nabla \widetilde{w} \|_{L^\infty(Q_{1/4})} \leq  C \left(\|\nabla \widetilde{w}\|_{L^2( Q_1)}+ [\nabla \widetilde{u}]_{\alp, \, Q_1 } \right).\]

Rescaling back to the original region $\widehat{\om}_{\dt}(z^\prime)$,
\begin{equation}\label{ineq-scale-original}
\|\nabla w\|_{L^\infty( \widehat{\Omega}_{\delta/4}(z^\prime))}\leq \frac{C}{\delta}\left(\delta^{1-\frac{n}{2}}\|\nabla w\|_{L^2( \widehat{\Omega}_{\delta}(z^\prime))}+\delta^{1+\alp}[\nabla \overline{u}_1]_{\alp, \, \widehat{\Omega}_{\delta}(z^\prime) }\right).
\end{equation}
By virtue of \eqref{ineq-semi-holder-norm} and \eqref{energy_w11_inomega_z1}, we have, for $(z^\prime, x_n)\in \widehat{\Omega}_{\delta/4}(z^\prime)$ and $|z^\prime |\leq R_1$,
\begin{equation*}
|\nabla w(z^\prime, x_n)|\leq \|\nabla w\|_{L^\infty( \widehat{\Omega}_{\delta/4}(z^\prime))}\leq C\left(\delta^{-\frac{n}{2}}\cdot\delta^{\frac{n}{2}-\frac{1}{1+\alp}}+\delta^{\alp}\cdot\delta^{-\alp-\frac{1}{1+\alp}}\right)\leq C\delta^{-\frac{1}{1+\alp}}.
\end{equation*}
Thus, we finish the proof of Proposition \ref{prop-interior}.
\end{proof}

 Proposition \ref{prop-interior} also holds for $v_2$, defined in  \eqref{equ_v1}, if we choose an auxiliary function $\bar{u}_{2}=1-\bar{u}_{1}$ in $\Omega_{R_1}$.

\subsection{Proof of Lemma \ref{lemma-m-1-to-2}}

\begin{proof}[Proof of Lemma \ref{lemma-m-1-to-2}]
Recalling the definitions of $v_{1}$ and $v_{2}$ in \eqref{equ_v1}, one has
\begin{align*}
\begin{cases}
\Delta(v_{1}+v_{2}-1)=0& \mbox{in}~\Omega,\\
v_{1}+v_{2}-1=0& \mbox{on}~\partial{D}_{i},~i=1,2,\\
v_{1}+v_{2}-1=-1& \mbox{on}~\partial{D}.
\end{cases}
\end{align*}
By theorem 1.1 of \cite{llby}, we have \eqref{v1+v2_bounded1}. By the same reason,  \eqref{nabla_v3} also holds. It is easy to have \eqref{esti-C12} hold by the trace embedding theorem and $\|u\|_{H^{1}(\Omega)}\leq\,C$ (independent of $\varepsilon$).

For \eqref{esti-c1-c2}, we rewrite the decomposition \eqref{decomposition1_u} as follows
\begin{equation*}
u=(C_1-C_2) v_1 +C_2(v_1+v_2) +v_0.
\end{equation*}
From the third line of \eqref{equ-infty-interior}, we have
\begin{equation*}
\int_{\ptl D_1} \frac{\ptl u}{\ptl \nu}\Big|_+= (C_1-C_2)a_{11}+C_2(a_{11}+a_{12})+b_1=0,
\end{equation*} 
where
\begin{equation*}
a_{ij}:=\int_{\partial D_{i}}\frac{\partial v_{j}}{\partial\nu},\quad b_{i}:=\int_{\partial D_{i}}\frac{\partial v_{0}}{\partial\nu},\quad i,j=1,2.
\end{equation*}
Hence,
\begin{equation}\label{ineq-c1-c2-leq}
|C_1-C_2| \leq \frac{|C_2|\cdot |a_{11}+a_{12}|+|b_1|}{|a_{11}|}.
\end{equation}
By lemma 2.4 in \cite{bly1}, we have known that
\begin{equation}\label{a11+a12}
\frac{1}{C}\leq |a_{11}+a_{12}|\leq C\quad \text{and} \quad |b_i|\leq C\|\varphi\|_{L^\infty(\ptl D)}\quad \text{ for}\quad  i=1, \, 2.
\end{equation}

Now we calculate $a_{11}$. By using the Green's formula, 
\begin{equation}\label{equality-a11}
a_{11}=\int_{\partial D_{1}}\frac{\partial v_{1}}{\partial\nu}=\int_{\partial D_{1}}\frac{\partial v_{1}}{\partial\nu}v_{1}=-\int_{\Omega}|\nabla v_{1}|^{2}.
\end{equation}
We divide
\begin{equation*}
\int_{\Omega}|\nabla v_1|^2=\int_{\Omega_{R_1}}|\nabla v_1|^2+\int_{\Omega\setminus \Omega_{R_{1}}}|\nabla v_1|^2.
\end{equation*}
where it is easy to see from \eqref{ineq-nabla-v-out} that
\[\int_{\Omega\setminus \Omega_{R_{1}}}|\nabla v_1|^2\leq C.\]
Then, combining with the upper bound \eqref{nabla-v-i0}, a direct calculation yields
\begin{equation}\label{esti-rho}
\frac{1}{C\rho_{n,\,\alp}(\va)}\leq |a_{11}|\leq \frac{C}{\rho_{n,\,\alp}(\va)},
\end{equation}
where $\rho_{n,\,\alp}(\va)$ is defined in \eqref{def-rho-n-m}.
Thus, substituting this and \eqref{a11+a12} into  \eqref{ineq-c1-c2-leq}, we prove \eqref{esti-c1-c2}.
\end{proof}

\subsection{The Lower Bounds}\label{part 2 pf lower}
From the decomposition \eqref{decomposition_u2}, we write
\[\nabla u=(C_1-C_2)\nabla v_1+\nabla u_b,\quad \text{in}~ \Omega,\]
where
\[u_b:=C_2(v_1+v_2)+v_0,\]
verifying
\begin{equation*}
\begin{cases}
\Delta u_b=0\quad& \mathrm{in}~\Omega,\\
u_b=C_2& \mathrm{on}~\partial{D}_{1}\cup\partial{D_{2}},\\
u_b=\varphi& \mathrm{on}~\partial{D}.
\end{cases}
\end{equation*}
It follows from the third line of \eqref{equ-infty-interior} that
\begin{equation*}
-(C_1-C_2)a_{11}=\widetilde{b}_1,
\end{equation*}
where $\widetilde{b}_1:=\int_{\ptl D_1}\frac{\ptl u_b}{\ptl \nu}|_+$, which is a linear functional of $\varphi$. We observe from  \eqref{esti-rho}, that $a_{11}\neq 0$, so
\begin{equation}\label{equal-c1-c2}
|C_1-C_2|=\frac{|\widetilde{b}_1|}{-a_{11}}.
\end{equation}

For $n=2$, by using the same argument in \cite{lhg}, we have  $\widetilde{b}_1 \to \widetilde{b}_1^*$ as $\va \to 0$. Here $\widetilde{b}_1^*=\int_{\ptl D_1^*}\frac{\ptl u^*}{\ptl \nu}\big|_+$ is a blow-up factor and $u^*$ satisfies 
\begin{equation*}
\begin{cases}
\Delta u^*=0& \mathrm{in}~\Omega^*,\\
u^*=C^*& \mathrm{on}~\partial{D}_{1}^*\cup\partial{D_{2}}^*,\\
\int_{\ptl D_1}\frac{\ptl u^*}{\ptl \nu}\big|_++\int_{\ptl D_2^*}\frac{\ptl u^*}{\ptl \nu}\big|_+=0,\\
u_b=\varphi& \mathrm{on}~\partial{D},
\end{cases}
\end{equation*}
where $D_i^*=\{x\in \R^n : x+P_i \in D_i\}$ ($i=1, \ 2$), $\Omega^*=D\setminus \overline{D_1^*\cap D_2^*}$  and $C^*=\lim\limits_{\va \to 0}\frac{1}{2}(C_1+C_2)$. Thus, by using \eqref{esti-rho} and \eqref{equal-c1-c2}, if there exists $\varphi$ such that $\widetilde{b}_1^*[\varphi] \neq 0$, one has for small $\va>0$ 
\begin{align*}
|\nabla{u}(0',x_{n})|\geq&|C_{1}-C_{2}\|\nabla{v}_{1}(0',x_{n})|-|\nabla u_b(0',x_{n})|\\
\geq&\frac{\rho_{n,\,\alp}(\va) }{C\va}\cdot|\widetilde{b}_1^*[\varphi]|.
\end{align*}

For $n\geq 3$, it suffices to find a boundary data $\varphi$ such that $|\widetilde{b}_1[\varphi]|\geq \frac{1}{C}$ for some positive universal constant $C$, although $\widetilde{b}_1^*[\varphi] $ is not necessarily its limit. Then we have 
\[|\nabla u(x)|\geq \frac{1}{C\va}, \quad \text{for}~ x\in \overline{P_1P_2}.\]

\vspace{0.5cm}

\section{ Two Key Estimates in the Proof of Theorem \ref{thm1}}\label{pf of thm1}\label{sec4}

The key estimates in the proof of Theorem \ref{thm1} are the estimates of $|\nabla (v_{1}-\bar{u}_{1})|$, Proposition \ref{prop1}, and that of $|C_{1}-C_{2}|$, Proposition \ref{lemma-nabla-v1+v2}. Firstly, to prove Proposition \ref{prop1}, we follow the main idea in \cite{bll, llby} and list the main differences to show the role of $|\Sigma'|$ playing in such blow-up analysis. We emphasize that here the constants $C$ are independent of $|\Sigma'|$. For simplicity, we denote  $$d(x^\prime):=d_{\Sigma^\prime}(x^\prime)=\dist(x^\prime, \Sigma^\prime).$$

\subsection{Proof of Proposition \ref{prop1}}

First, we  denote
\begin{equation}\label{def_w}
w:=v_{1}-\bar{u}_{1}.
\end{equation}
From \eqref{equ_v1}, the definition of $\bar{u}_{1}$ and \eqref{def_w}, we have
\begin{equation}\label{w20}
\left\{
\begin{aligned}
-\Delta{w}&=\Delta\bar{u}_{1}\quad\mbox{in}~\Omega,\\
w &=0\quad\ \ \mbox{on}~\partial \Omega.
\end{aligned}\right.
\end{equation}
Similar as before, by virtue of the standard elliptic theory, one has that
\begin{equation*}
|w|+\left|\nabla{w}\right|\leq\,C,
\quad\mbox{in}~~ \Omega\setminus\Omega_{R_{1}}.
\end{equation*}
Recalling \eqref{ubar-out}, we have
\begin{equation}\label{ineq-nabla-v-out2}
|\nabla v_1(x)|\leq C, \qquad x\in \Omega \setminus \Omega_{R_1}.
\end{equation}
Thus,  to obtain \eqref{nabla_w_i0},
we only need to prove
\begin{equation}\label{section2 energyw1}
\left\| \nabla{w} \right\|_{L^{\infty}(\Omega_{R_{1}})}\leq\,C,
\end{equation}

Next, we mainly make use of an adapted version of the iteration technique developed in \cite{bll} to obtain the energy estimates in a small cube and then use $W^{2, p}$ estimates and the bootstrap argument to prove \eqref{section2 energyw1}. 

\begin{proof}[Proof of Proposition \ref{prop1}]

We divide into three steps.

{\bf STEP 1. The boundedness of the total energy: }

\begin{equation}\label{1energy_w}
\int_{\Omega}\left|\nabla{w}\right|^{2} \ dx \leq\,C.
\end{equation}

 Indeed, by using the maximum principle, we have $0<v_{1}<1$ in $\Omega$. Together with $|\bar{u}|\leq\,C$, we have
\begin{equation*}
\|w\|_{L^{\infty}(\Omega)}\leq\,C.
\end{equation*}
By a direct computation,
\begin{equation}\label{nabla2u_bar}
\Delta\bar{u}_{1}(x)=0,\quad x\in\Sigma,\quad |\Delta\bar{u}_{1}(x)|\leq\frac{C}{\varepsilon+d^{2}(x')},\quad\,x\in\Omega_{R_{1}}\setminus\Sigma.
\end{equation}
Then, multiplying the equation in \eqref{w20} by $w$ and integrating by parts, one has
\begin{align*}
\int_{\Omega}|\nabla{w}|^{2} \ dx
=\int_{\Omega}w\left(\Delta\bar{u}_{1}\right) \ dx
\leq\,\|w\|_{L^{\infty}(\Omega)}\left(\int_{\Omega_{R_{1}}\setminus\Sigma}|\Delta\bar{u}_{1}|+C\right)
\leq\,C.
\end{align*}

{\bf STEP 2. The local energy estimates:} 
\begin{equation}\label{energy_w_inomega_z1}
\int_{\widehat{\Omega}_{\delta(z')}(z')}\left|\nabla{w}\right|^{2}dx\leq
C\delta(z')^{n}.
\end{equation}

We adapt the iteration technique in \cite{bll} and give a 
unified iteration process for $0<|z'|<R_{1}$. For $0<t<s<R_{1}$, let $\eta$ be a cut-off function defined in \eqref{def-eta-x-prime}. Multiplying the equation in \eqref{w20} by $\eta^{2}w$ and integrating by parts
leads  to the Caccioppolli's inequality
\begin{align}\label{FsFt11}
\int_{\widehat{\Omega}_{t}(z')}|\nabla{w}|^{2} \ dx \leq\,\frac{C}{(s-t)^{2}}\int_{\widehat{\Omega}_{s}(z')}|w|^{2} \ dx
+(s-t)^{2}\int_{\widehat{\Omega}_{s}(z')}\left|\Delta\bar{u}_{1}\right|^{2} \ dx.
\end{align}

For $0<s<|z^\prime|\leq R_1/2$, similar to \eqref{energy_nabla_w11_square_in}, one has
\begin{equation}\label{int_w1}
\begin{aligned}
\int_{\widehat{\Omega}_{s}(z')}|w|^{2}\ dx
\leq\,C\delta(z^\prime)^2\int_{\widehat{\Omega}_{s}(z')}|\nabla{w}|^{2}\ dx\quad \text{if}~0<s<\frac{2|z^\prime|}{3}.
\end{aligned}
\end{equation}
Then, combining with \eqref{FsFt11} and \eqref{int_w1}, we have, for $0<t<s<\frac{2|z^\prime|}{3}$,
\begin{equation}\label{tildeF111_in}
F(t)\leq\,\left(\frac{c_{1}\delta(z^\prime)}{s-t}\right)^{2}F(s)+C(s-t)^{2}\int_{\widehat{\Omega}_{s}(z')}\left|\Delta\bar{u}_{1}\right|^{2} \ dx,
\end{equation}
where $c_1$ is the {\it universal constant}, we fix it now. $F(t)$ is defined in \eqref{denotation}.

Let $k=\left[\frac{\max\{\sqrt{\va} ,\, |z^\prime|\}}{4c_{1}\delta(z^\prime)}\right]$ and $t_{i}=\delta(z^\prime)+2c_{1}i\delta(z^\prime)$, $i=0,1,2,\cdots,k$. Take
$s=t_{i+1}$ and $t=t_{i}$ in \eqref{tildeF111_in}. It follows from \eqref{nabla2u_bar} that
\begin{align}\label{integal_Lubar11_in}
\int_{\widehat{\Omega}_{t_{i+1}}(z')}
\left|\Delta\bar{u}_{1}\right|^{2} \ dx &\leq \int_{|x^\prime-z^\prime|<t_{i+1}}\frac{C}{\delta(x^\prime)}\,dx^\prime
\leq\frac{Ct_{i+1}^{n-1}}{\delta(z^\prime)}\notag \\
&\leq C(i+1)^{n-1}\delta(z^\prime)^{n-2}.
\end{align}
An iteration formula follows from \eqref{tildeF111_in} and \eqref{integal_Lubar11_in},
\begin{equation*}
F(t_i)\leq \frac{1}{4}F(t_{i+1})+C(i+1)^{n-1}\delta(z^\prime)^n,\quad\,i=1,2,\cdots,k.
\end{equation*}
After iterating $k$ times, in view of \eqref{1energy_w}, we have
\begin{equation*}
\begin{aligned}
F(t_{0})
\leq (\frac{1}{4})^{k}F(t_{k})
+C\delta(z^\prime)^{n}\sum_{i=0}^{k-1}(\frac{1}{4})^{i}(i+1)^{n-1}
\leq C\delta(z^\prime)^{n}.
\end{aligned}
\end{equation*}
So \eqref{energy_w_inomega_z1} holds.

{\bf STEP 3.  Rescaling and $L^{\infty}$ estimates of $|\nabla w|$. }

Under the change of variables \eqref{change-var}, 
domain $\widehat{\om}_{\dt}(z^\prime)$ becomes $Q_1$, see \eqref{def-Q1}, with the top and bottom boundaries $\Gamma^\pm_1$. Further denote
\begin{equation*}
\widetilde{w} (y^\prime, y_n):= w(\dt y^\prime +z^\prime, \dt y_n),\quad\mbox{and}\qquad \widetilde{u} (y^\prime, y_n):=\bar{u}_1(\dt y^\prime +z^\prime, \dt y_n).
\end{equation*}
From \eqref{w20}, we see that $\widetilde{w}$ satisfies 
\begin{equation}\label{equation-w2p}
\left\{\begin{aligned}
-\Delta \widetilde{w}&= \Delta \widetilde{u} \quad  &\text{in} ~ Q_1,\\
\widetilde{w}&=0\quad &\text{on} ~ \Gamma^\pm_1.
\end{aligned}\right.
\end{equation}
By using $W^{2,p}$ estimates and the standard bootstrap argument for \eqref{equation-w2p} in $Q_1$, then rescaling back, the same as the step 1.3 in \cite{LX}, we obtain
\begin{equation}
\left\|\nabla{w}\right\|_{L^{\infty}(\widehat{\Omega}_{\delta/2}(z'))}\leq\,
\frac{C}{\delta}\left(\delta^{1-\frac{n}{2}}\left\|\nabla{w}\right\|_{L^{2}(\widehat{\Omega}_{\delta}(z'))}
+\delta^{2}\left\|\Delta\bar{u}_{1}\right\|_{L^{\infty}(\widehat{\Omega}_{\delta}(z'))}\right).
\label{AAA}
\end{equation}
Substituting \eqref{energy_w_inomega_z1} and \eqref{nabla2u_bar} into \eqref{AAA} yields
$$\left|\nabla{w}(z',z_{n})\right|\leq\frac{C\Big(\delta^{1-\frac{n}{2}}\delta^{\frac{n}{2}}+\delta\Big)}{\delta}\leq\,C,
\qquad\forall \
-\frac{\va}{2}+ h_{2}(z')<z_{n}<\frac{\varepsilon}{2}+h_1(z').$$
Thus, the estimate \eqref{nabla_w_i0} is established.
\end{proof}

We remark that Proposition \ref{prop1} also holds for $v_2$, the solution of \eqref{equ_v1}, if we  choose auxiliary function as $\bar{u}_{2}=1-\bar{u}_{1}$ in $\Omega_{R_1}$.

\subsection{ Proof of Proposition \ref{lemma-nabla-v1+v2}}\label{sub-proof-thm1}

The following lemma is a main difference with the analog in \cite{bly1}, which plays a key role in the blow-up analysis of $|\nabla u|$. 

\begin{lemma}\label{lemma_a11}
Under the hypotheses of Theorem \ref{thm1}, then for small $\va>0$, 
we have
\begin{align}\label{aii}
\frac{1}{C}\left(\frac{|\Sigma'|}{\varepsilon}+\frac{1}{\rho_n(\va)}\right)\leq-a_{ii}
\leq C\left(\frac{|\Sigma'|}{\varepsilon}+\frac{1}{\rho_n(\va)}\right),\quad i=1,2,
\end{align}
where $C$ is a {\it universal constant}, independent of $|\Sigma'|$.
\end{lemma}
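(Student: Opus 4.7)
Green's formula, applied exactly as in \eqref{equality-a11}, gives
\[
-a_{ii} = \int_{\Omega} |\nabla v_i|^2\, dx, \qquad i = 1, 2.
\]
Since the analogue of Proposition \ref{prop1} for $v_2$ holds via the auxiliary function $\bar u_2 = 1 - \bar u_1$, the two indices are symmetric and I will only treat $i = 1$. The plan is to insert the two-sided pointwise bound \eqref{v1-bounded1} into this energy integral and exploit the geometry of the flat part $\Sigma$ and the curved neck $\Omega_{R_1}\setminus\Sigma$ separately.

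First I would split $\Omega = \Sigma \cup (\Omega_{R_1}\setminus\Sigma) \cup (\Omega\setminus\Omega_{R_1})$. The outermost piece contributes only $O(1)$ by \eqref{ineq-nabla-v-out2}. On the flat part $\Sigma = \Sigma'\times(-\varepsilon/2,\varepsilon/2)$ one has $d(x') \equiv 0$ and $\delta(x') \equiv \varepsilon$, so \eqref{v1-bounded1} directly gives
\[
\int_\Sigma |\nabla v_1|^2\, dx \asymp \frac{|\Sigma'|\,\varepsilon}{\varepsilon^2} = \frac{|\Sigma'|}{\varepsilon},
\]
which is exactly the first term in \eqref{aii}. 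On the neck, Fubini in $x_n$ together with the equivalence $\delta(x') \asymp \varepsilon + d^2(x')$ (a consequence of the convexity assumption \eqref{h1----h2}) yields
\[
\int_{\Omega_{R_1}\setminus\Sigma} |\nabla v_1|^2\, dx \asymp \int_{B_{R_1}'\setminus\Sigma'} \frac{dx'}{\varepsilon + d^2(x')}.
\]

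The heart of the argument is to identify the right-hand side with $\rho_n^{-1}(\varepsilon)$. Since $\Sigma'$ is a bounded convex set in $\R^{n-1}$, the normal exponential map from $\partial \Sigma'$ gives a bi-Lipschitz parametrization of $B_{R_1}'\setminus \Sigma'$ by $(\sigma, t) \in \partial\Sigma'\times(0, R_1)$ with $t = d(x')$. Using that the $(n-2)$-dimensional measure of the level set $\{d = t\}$ is comparable to $|\partial \Sigma'|$ for $t$ small and grows like $t^{n-2}$ for $t$ past the diameter of $\Sigma'$, the $(n-1)$-dimensional integral reduces to a one-variable computation comparable to
\[
\int_0^{R_1} \frac{t^{n-2}\, dt}{\varepsilon + t^2} \asymp \frac{1}{\rho_n(\varepsilon)},
\]
which evaluates to $\varepsilon^{-1/2}$, $|\ln\varepsilon|$, $O(1)$ in the three regimes $n=2,\,3,\,\geq 4$, matching \eqref{def_rhon}. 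Combining the three pieces yields both sides of \eqref{aii}.

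The main obstacle is the careful handling of the tubular contribution near $\partial \Sigma'$, where the level sets of $d$ have $(n-2)$-measure $\asymp |\partial\Sigma'|$ instead of $\asymp t^{n-2}$; this piece must be shown to be absorbed by $|\Sigma'|/\varepsilon + \rho_n^{-1}(\varepsilon)$ uniformly in $|\Sigma'|$. For $n = 2$ the issue is trivial since $\mathcal{H}^0(\{d = t\})$ is just the constant $2$. For $n \geq 3$ the convexity of $\Sigma'\subset B_{R_1}'$ produces an isoperimetric-type bound relating $|\partial\Sigma'|$ to $|\Sigma'|$ and $R_1$, and a Young-type inequality then absorbs the tubular piece. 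The lower bound sidesteps this entirely: one keeps only the $\Sigma$ piece to capture $|\Sigma'|/\varepsilon$, and only the radial far-field of the neck (where $d(x') \asymp |x'|$) to capture $\rho_n^{-1}(\varepsilon)$.
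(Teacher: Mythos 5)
Your outline matches the paper's proof in its skeleton: Green's formula gives $-a_{11}=\int_\Omega|\nabla v_1|^2$, the integral is split over $\Sigma$, $\Omega_{R_1}\setminus\Sigma$, and the bulk, the two-sided pointwise bound \eqref{v1-bounded1} reduces the neck piece to the profile integral $\int_{B'_{R_1}\setminus\Sigma'}(\varepsilon+d^2(x'))^{-1}dx'$. For the upper bound, your coarea/Steiner-formula packaging is a valid variant of the paper's argument, which instead sandwiches $\Sigma'$ between balls $B'_{r_0}\subset\Sigma'\subset B'_{R_0}$ via the John ellipsoid, replaces $d(x')$ by $\mathrm{dist}(x',B'_{R_0})$ (which only increases the integrand), and reduces to a one-variable polar integral. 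Either route produces the same troublesome tubular term of size $|\partial\Sigma'|/\sqrt{\varepsilon}$; the bound you call \emph{isoperimetric} should be read as perimeter monotonicity for convex bodies plus the bounded-eccentricity normalization \eqref{sigma}, which together give $|\partial\Sigma'|\leq C R_0^{n-2}$ and $|\Sigma'|\geq R_0^{n-1}/C$, after which a Cauchy-type inequality $R_0^{n-2}/\sqrt{\varepsilon}\leq C\left(R_0^{n-1}/\varepsilon+1\right)$ absorbs the term as you anticipate.

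The lower-bound sketch, however, is incorrect as written. For $n=2$ and $n=3$, the \emph{radial far field} $\{|x'|>2R_0\}$ contributes only $O(1)$ to the profile integral once $R_0$ is of order one: indeed $\int_{2R_0}^{R_1}(\varepsilon+r^2)^{-1}dr$ stays bounded for $n=2$, and $\int_{2R_0}^{R_1}r(\varepsilon+r^2)^{-1}dr\to\frac12\ln(R_1^2/4R_0^2)$ for $n=3$, neither of which captures $\rho_n^{-1}(\varepsilon)$. The blow-up for $n=2,3$ comes from the thin annulus $\{d(x')\leq\sqrt{\varepsilon}\}$ hugging $\partial\Sigma'$, not from the far field. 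Your sketch survives only after a case split: when $R_0\leq\sqrt{\varepsilon}$ the far field extends down to radii of order $\sqrt{\varepsilon}$ and does give $\rho_n^{-1}(\varepsilon)$; when $R_0\geq\sqrt{\varepsilon}$ the $\Sigma$-piece $|\Sigma'|/\varepsilon$ already dominates $\rho_n^{-1}(\varepsilon)$, so the far-field deficit is harmless. The paper avoids the dichotomy by integrating over a cone $Q$ with vertex at a point $p\in\partial\Sigma'$ and radius $(R_1-R_0)/2$; since $d(x')\leq|x'-p|$ for $x'\in Q$, one gets $\int_Q(\varepsilon+d^2(x'))^{-1}dx'\geq c\int_0^{(R_1-R_0)/2}r^{n-2}(\varepsilon+r^2)^{-1}dr\geq\rho_n^{-1}(\varepsilon)/C$ uniformly in $R_0$, which is the cleaner route and is what you should adopt if you want the constant to be genuinely independent of $|\Sigma'|$.
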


In order to prove Lemma \ref{lemma_a11}, we need the following well-known property for bounded convex domains, which refers to the ellipsoid of minimum volume (see e.g. \cite[Theorem 1.8.2]{gu}).

\begin{lemma}
If $D\subset\mathbb{R}^{n}$ is a bounded convex set with nonempty interior and $E$ is the ellipsoid of minimum volume containing $D$ center at the center of mass of $D$, then
$$n^{-3/2}E\subset D\subset E,$$
where $rE$ denotes the $r$-dilation of $E$ with respect to its center.
\end{lemma}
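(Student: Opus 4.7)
The plan is to reduce the problem to the unit ball by an affine change of variables and then argue by contradiction, combining a Minkowski-type centroid inequality with a volume-decreasing perturbation of the ellipsoid.

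\textbf{Step 1 (Normalization).} The statement is affinely invariant: if $T$ is a non-singular affine map, then $T(D)$ is convex with centroid $T(c_G)$, and $T(E)$ is the ellipsoid of minimum volume containing $T(D)$ centered at $T(c_G)$. Hence I may apply an affine transformation that sends the centroid to $0$ and makes $E$ the unit ball $B_1$. The inclusion $D\subset E$ is then trivial, and it remains to prove $n^{-3/2}B_1\subset D$.

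\textbf{Step 2 (Set up the contradiction).} Suppose, contrary to the claim, that there exists $p\in n^{-3/2}B_1\setminus D$, with $|p|=r<n^{-3/2}$. Since $D$ is convex and closed, Hahn--Banach yields a unit vector $v$ such that $\sup_{x\in D}\langle x,v\rangle \le \langle p,v\rangle\le r$. Rotating coordinates, I may assume $v=e_1$, so the support function of $D$ satisfies $h_D(e_1)\le r$. At this point I invoke the Minkowski centroid inequality: if $K$ is convex with centroid at $0$, then $-\tfrac1n K\subset K$, equivalently $h_K(-v)\le n\, h_K(v)$ for every unit $v$. Applied to $D$ with $v=-e_1$, this gives $h_D(-e_1)\le n\, h_D(e_1)\le nr<n^{-1/2}<1$. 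Therefore
\[
D\;\subset\; B_1\cap\{x\in\mathbb R^n:\,|x_1|\le nr\}.
\]

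\textbf{Step 3 (Construct a smaller ellipsoid).} For $s>0$ small, consider the ellipsoid
\[
E_{s,t}=\Bigl\{x\in\mathbb R^n:\;(1+t)x_1^2+(1-s)\sum_{i=2}^{n}x_i^2\le 1\Bigr\},
\]
centered at $0$ (hence at the centroid of $D$). For $x\in D$, using $\sum_{i\ge 2}x_i^2\le 1-x_1^2$ and $x_1^2\le(nr)^2$,
\[
(1+t)x_1^2+(1-s)\sum_{i\ge 2}x_i^2\;\le\;(1-s)+(t+s)x_1^2\;\le\;(1-s)+(t+s)(nr)^2,
\]
which is $\le 1$ provided $t\le s\,(1-(nr)^2)/(nr)^2$. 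Choose $t$ equal to this upper bound. The volume of $E_{s,t}$ relative to $B_1$ is $(1+t)^{-1/2}(1-s)^{-(n-1)/2}$; expanding to first order in $s$, this ratio is strictly less than $1$ as soon as
\[
\frac{1-(nr)^2}{(nr)^2}\;>\;n-1,\qquad\text{i.e.,}\qquad n(nr)^2<1,\qquad\text{i.e.,}\qquad r<n^{-3/2}.
\]
This is exactly our hypothesis on $r$, so for all sufficiently small $s>0$ the ellipsoid $E_{s,t}$ contains $D$, is centered at the centroid, and has strictly smaller volume than $B_1$, contradicting the minimality of $E=B_1$. Hence no such point $p$ exists and $n^{-3/2}B_1\subset D$, completing the proof.

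\textbf{Main obstacle.} The delicate point is calibrating the two-parameter perturbation $(s,t)$: the constraint $h_D(e_1)\le r$ alone is not enough to cheaply shrink the ellipsoid, because $D$ may extend almost to the boundary of $B_1$ in directions nearly orthogonal to $e_1$. The Minkowski centroid inequality $h_D(-e_1)\le n h_D(e_1)$ is essential—it upgrades a one-sided bound into a two-sided slab, and it is precisely the resulting factor of $n$ that combines with the $n^{-1/2}$ from the volume perturbation calculation to yield the sharp exponent $n^{-3/2}$.
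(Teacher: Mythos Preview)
The paper does not supply its own proof of this lemma; it simply quotes the result as Theorem~1.8.2 of Guti\'errez's monograph on the Monge--Amp\`ere equation and uses it as a black box. So there is nothing to compare against directly.

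Your proof is correct. The normalization in Step~1 is legitimate (the problem is affinely invariant), the separation and Minkowski--Radon centroid inequality $-\tfrac{1}{n}D\subset D$ in Step~2 give exactly the two-sided slab bound $|x_1|\le nr$ you need, and the perturbation computation in Step~3 is accurate: with $t=s\frac{1-(nr)^2}{(nr)^2}$, the log-volume of $E_{s,t}$ expands as $\tfrac{s}{2}\big[(n-1)-\tfrac{1-(nr)^2}{(nr)^2}\big]+O(s^2)$, which is negative precisely when $n(nr)^2<1$, i.e.\ $r<n^{-3/2}$. This contradicts minimality of $B_1$, so $n^{-3/2}B_1\subset D$.

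Your argument is essentially the standard John-ellipsoid perturbation technique, with the centroid inequality supplying the missing factor of $n$ that distinguishes this centered version ($n^{-3/2}$) from the unconstrained L\"owner--John ellipsoid ($n^{-1}$). This is in spirit the same approach one finds in Guti\'errez's book, so your proposal can stand as a self-contained proof where the paper merely cites one.
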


Thus, for bounded convex $(n-1)$-dimensional domain $\Sigma'$, there exists a $E'$ such that
$$(n-1)^{-3/2}E'\subset \Sigma'\subset E'.$$
Denote the length of the longest principal semi-axis as $R_{0}$ and the length of the shortest principal semi-axis as $\widetilde{R}_{0}>0$. In order to show the role of $|\Sigma'|$ in the blow-up analysis of $|\nabla u|$, we suppose for simplicity that $\frac{R_{0}}{\widetilde{R}_{0}}\geq a$ for some $a>0$. Set $r_{0}=(n-1)^{-3/2}\widetilde{R}_{0}$. Obviously, $B'_{r_{0}}\subset \Sigma'\subset E'\subset B'_{R_{0}}$. Then, there exists a constant $C$, depending only on $n$ and $a$, such that
\begin{equation}\label{sigma}
|B'_{R_{0}}|\leq C|\Sigma'|.
\end{equation}

\begin{proof}[Proof of Lemma \ref{lemma_a11}]
Here, we only estimate $a_{11}$ for instance, since $a_{22}$ is similar. By virtue of the same reason of \eqref{equality-a11}, we have
\begin{align}\label{a11'}
-a_{11}=\int_{\Omega}|\nabla v_{1}|^{2}=\int_{\Omega\setminus \Omega_{R_1}}|\nabla v_{1}|^{2}+\int_{\Sigma}|\nabla v_{1}|^{2}+\int_{\Omega_{R_1}\setminus\Sigma}|\nabla v_{1}|^{2}.
\end{align}
For the first term in \eqref{a11'}, it is easy to see from (\ref{ineq-nabla-v-out2}) that
\begin{align}\label{R3}
\int_{\Omega\setminus \Omega_{R_1}}|\nabla v_{1}|^{2}\leq C.
\end{align}
For the second term, by (\ref{v1-bounded1}), 
\begin{align}\label{first_term}
\frac{|\Sigma'|}{C\varepsilon}\leq\int_{\Sigma'}\int_{0}^{\varepsilon}\frac{1}{C\varepsilon^2}dx_ndx'\leq\int_{\Sigma}|\nabla v_{1}|^{2}\leq\int_{\Sigma'}\int_{0}^{\varepsilon}\frac{C}{\varepsilon^2}dx_ndx'\leq\frac{C|\Sigma'|}{\varepsilon}.
\end{align}

For the last term in \eqref{a11'}, it is a little complicated. Using (\ref{v1-bounded1}) again, one has
\begin{align*}
&\int_{B'_{R_{1}}\setminus\Sigma'}
\int_{-\va/2+h_2(x')}^{\varepsilon/2+h_1(x')}\frac{1}{C(\varepsilon+d^2(x'))^{2}}dx_ndx'\\
&\leq
\int_{\Omega_{R_1}\setminus\Sigma}|\nabla v_{1}|^{2}\\
&\leq\int_{B'_{R_{1}}\setminus\Sigma'}
\int_{-\va/2+h_2(x')}^{\varepsilon/2+h_1(x')}\frac{C}{(\varepsilon+d^2(x'))^2}dx_ndx',
\end{align*}
which implies that
\begin{align}\label{a11_mid}
\int_{B'_{R_{1}}\setminus\Sigma'}\frac{dx'}{C(\varepsilon+d^2(x'))}\leq
\int_{\Omega_{R_1}\setminus\Sigma}|\nabla v_{1}|^{2}\leq\int_{B'_{R_{1}}\setminus\Sigma'}\frac{Cdx'}{\varepsilon+d^2(x')}.
\end{align}

Next, we divide into three cases by dimension to calculate the integral in \eqref{a11_mid}. Fist, if $n=2$, then $\Sigma'=(-R_{0},R_{0})$, and $d(x')=|x'|-R_{0}$. We can choose some constant $\tilde{\varepsilon}\in(0,1)$ depending only on $R_{1}$, such that for $0<\varepsilon<\tilde{\varepsilon}$,
\begin{align}\label{n=2}
\int_{R_0}^{R_1}\frac{dr}{C\left(\varepsilon+(r-R_0)^2\right)}=\frac{1}{C}\int_{0}^{R_1-R_0}\frac{dr}{\varepsilon+r^2}=\frac{1}{C\sqrt{\varepsilon}}\arctan\frac{R_{1}-R_{0}}{\sqrt{\varepsilon}}.
\end{align}
Inserting \eqref{R3}--\eqref{n=2} to \eqref{a11'}, we have, for small $\varepsilon>0$ (say, at least less than $(R_{1}-R_{0})^{2}$),
\begin{align*}
\frac{1}{C}\left(\frac{|\Sigma'|}{\varepsilon}+\frac{1}{\sqrt{\varepsilon}}\right)\leq -a_{11}\leq C\left(\frac{|\Sigma'|}{\varepsilon}+\frac{1}{\sqrt{\varepsilon}}\right),
\end{align*}
which implies \eqref{aii} for $n=2$.

For $n=3$, in view of \eqref{sigma}, we choose some constant $\tilde{\varepsilon}_1\in(0,1/e)$ such that for $0<\varepsilon<\tilde{\varepsilon}_1$, 
\begin{align*}
&\int_{B'_{R_{1}}\setminus\Sigma'}\frac{dx'}{\varepsilon+d^2(x')}\leq\int_{B'_{R_{1}}\setminus B'_{r_{0}}}\frac{dx'}{\varepsilon+dist^{2}(x',B'_{R_{0}})}\\
&\leq\int_{r_{0}}^{R_{0}}\frac{Cr}{\varepsilon}dr+\int_{R_{0}}^{R_{1}}\frac{Cr}{\varepsilon+(r-R_{0})^{2}}dr\\
&\leq\frac{C(R_0^2-r_{0}^{2})}{\varepsilon}+C\int_{R_{0}}^{R_{1}}\frac{r-R_{0}}{\varepsilon+(r-R_{0})^{2}}dr
+C\int_{R_{0}}^{R_{1}}\frac{R_{0}}{\varepsilon+(r-R_{0})^{2}}dr\\
&\leq C\left(\frac{R_0^2}{\varepsilon}+|\ln\varepsilon|+\frac{R_0}{\sqrt{\varepsilon}}\right)\leq C\left(|\ln\varepsilon|+\frac{|\Sigma'|}{\varepsilon}\right),
\end{align*}
where the Cauchy's inequality is used in the last inequality.

On the other hand, we pick a point $p\in\partial\Sigma'$, take a quadrant $Q$ outside $\Sigma'$, with $p$ as the vertex, $(R_{1}-R_{0})/2$ as the radius, and symmetric with the outword normal at $p$. Then, in the polar coordinates $\{p; r, \theta\}$ with $p$ as the center, for $x'\in Q$, we have $x'=p+(r\cos\theta, r\sin\theta)$, $\theta\in(-\frac{\pi}{4},\frac{\pi}{4})$, $r\in(0,(R_{1}-R_{0})/2)$, and $dist(x',\Sigma')\leq dist(x',p)$. There exists some small positive constant $\tilde{\varepsilon}\in(0,\tilde{\varepsilon}_1)$, depending only on $R_{1}$, such that for $0<\varepsilon<\tilde{\varepsilon}$, one has
\begin{align*}
\int_{B'_{R_{1}}\setminus\Sigma'}\frac{dx'}{\varepsilon+d^2(x')}&\geq\int_{Q}\frac{dx'}{\varepsilon+dist^{2}(x',p)}\\
&=\int_{-\frac{\pi}{4}}^{\frac{\pi}{4}}\int_{0}^{\frac{R_{1}-R_{0}}{2}}\frac{rdr}{\varepsilon+r^{2}} \geq\frac{1}{C}|\ln\varepsilon|.
\end{align*}
Substituting these two estimates above into \eqref{a11'}, together with \eqref{first_term} and \eqref{R3},  we have \eqref{aii} for $n=3$.

If $n\geq4$, by using \eqref{sigma} again, we have
\begin{align*}
\int_{B'_{R_{1}}\setminus\Sigma'}\frac{dx'}{\varepsilon+d^2(x')}&\leq
\int_{r_{0}}^{R_{0}}\frac{Cr^{n-2}}{\varepsilon}dr+\int_{R_{0}}^{R_{1}}\frac{Cr^{n-2}}{\varepsilon+(r-R_{0})^{2}}dr\\
&\leq\frac{C(R_0^{n-1}-r_{0}^{n-1})}{\varepsilon}+C\int_{0}^{R_1-R_0}\frac{(t+R_0)^{n-2}}{\varepsilon+t^2}dt\\
&\leq \frac{CR_0^{n-1}}{\varepsilon}+CR_0^{n-2}\int_{0}^{R_1-R_0}\frac{1}{\varepsilon+t^2}dt+C\int_{0}^{R_1-R_0}\frac{{t^{n-2}}}{\varepsilon+t^2}dt\\
&\leq C\left(\frac{R_0^{n-1}}{\varepsilon}+\frac{R_0^{n-2}}{\sqrt{\varepsilon}}+\int_{0}^{R_1-R_0}\frac{{t^{2}}}{\varepsilon+t^2}t^{n-4}dt\right),\\
&\leq C\left(\frac{|\Sigma'|}{\varepsilon}+1\right).
\end{align*}
For any $p\in\partial\Sigma'$, we also can construct a cone $Q\subset B'_{R_{1}}\setminus\Sigma'$ with $p$ as the vertex,  such that $dist(x',\Sigma')\leq dist(x',p)$ whenever $x'\in Q$. Then for small $\varepsilon>0$,
\begin{align*}
\int_{B'_{R_{1}}\setminus\Sigma'}\frac{dx'}{\varepsilon+d^2(x')}&\geq\int_{Q}\frac{dx'}{\varepsilon+dist^{2}(x',p)}
\geq\frac{1}{C}\int_{0}^{\frac{R_{1}-R_{0}}{2}}\frac{r^{n-2}}{\varepsilon+r^{2}}dr\geq\frac{1}{C}.
\end{align*}
Thus, \eqref{aii} holds for $n\geq4$. The proof of Lemma \ref{lemma_a11} is completed.
\end{proof}

\begin{remark}\label{rem aii}
Lemma \ref{lemma_a11} shows that if $|\Sigma'|>0$, by taking $\va>0$ small enough such that $\frac{1}{\rho_{n}(\va)}<\frac{|\Sigma'|}{\va}$, one has 
\begin{equation*}
\frac{|\Sigma'|}{C\varepsilon}\leq-a_{ii}\leq\frac{C|\Sigma'|}{\varepsilon},\quad i=1,2，
\end{equation*}
which leads the boundedness of $|\nabla u|$.
\end{remark}

\begin{proof}[Proof of Lemma \ref{lemma-nabla-v1+v2}] 
We prove \eqref{esti-c1-and-c2}. Indeed,
from \eqref{ineq-c1-c2-leq}, it suffices to estimate $|a_{11}|$. Combining with \eqref{ineq-c1-c2-leq} and \eqref{aii} of Lemma \ref{lemma_a11}, we obtain 
\begin{equation*}
|C_1-C_2|\leq \frac{C }{|a_{11}|} \cdot \|\varphi\|_{L^\infty(\ptl D)}\leq \frac{C\va}{|\Sigma^\prime|+\rho_n^{-1}(\va)\va}\cdot \|\varphi\|_{L^\infty(\ptl D)}.
\end{equation*}
The proof is completed.
\end{proof}

\vspace{0.5cm}

\section{Appendix : $C^{1,\alp}$ estimates and De Giorgi-Nash estimates}

\subsection{$C^{1,\alp}$ estimates } In this section, we shall use the Campanato's approach, see e.g. \cite{gm}, to prove Theorem \ref{lem-global-C1alp-estimates}. 

Let $Q$ be a  Lipschitz domain in $\R^n$, the  Campanato space $\mathcal{L}^{2,\lam}(Q)$, $\lam \geq 0$, is defined as follows
\begin{equation*}
\mathcal{L}^{2,\lam}(Q):=\Big\{u\in L^2(Q)~:~\sup_{x_0\in Q \atop \rho>0}\frac{1}{\rho^{\lam}}\int_{B_\rho(x_0)\cap Q}|u-u_{x_0, \rho}|^2 dx< +\infty \Big\},
\end{equation*}
where $u_{x_0, \rho}:=\frac{1}{|Q\cap B_\rho(x_0)|}\int_{Q\cap B_\rho(x_0)}u(x)\,dx$. It is endowed with the norm 
\begin{equation*}
\|u\|_{\mathcal{L}^{2,\lam}(Q)}:=\|u\|_{L^2(Q)}+[u]_{\mathcal{L}^{2, \lam}(Q)},
\end{equation*}
where the semi-norm $[\cdot]_{\mathcal{L}^{2,\lam}(Q)}$ is defined by
\begin{equation*}
[u]^2_{\mathcal{L}^{2, \lam}(Q)}:=\sup_{x_0\in Q \atop \rho>0}\frac{1}{\rho^{\lam}}\int_{B_\rho(x_0)\cap Q}|u-u_{x_0, \rho}|^2 dx.
\end{equation*}
It is known that if $n< \lam \leq n+2$ and $\alp=\frac{\lam-n}{2}$, the Campanato space $\mathcal{L}^{2,\lam}(Q)$ is equivalent to the H\"{o}lder space $C^{0, \alp}(Q)$.

We first recall a classical result in \cite{gm}.
\begin{theorem}(Theorem 5.14 in \cite{gm})\label{thm-514}
Let $Q$ be a bounded Lipschitz domain in $\R^n$, $n\geq 2$. Let $\widetilde{w}\in H^1(Q)$ be a solution for 
\begin{equation}\label{equ-w-divf-q14}
-\Delta \widetilde{w}=\Div \tilde{\mathbf{f}}\quad \text{in}~ Q, 
\end{equation}
with $\tilde{\mathbf{f}}\in C^{\alp}(Q, \R^n)$, $0<\alp<1$. Then $\nabla \widetilde{w} \in C^\alp(Q)$ and for $B_R:=B_R(x_0)\subset Q$, 
\begin{equation*}
\|\nabla \widetilde{w}\|_{\mathcal{L}^{2, n+2\alp}(B_{R/2})}\leq C\left(\|\nabla \widetilde{w}\|_{L^2(B_R)}+[\tilde{\mathbf{f}}]_{\mathcal{L}^{2, n+2\alp}( B_R)}\right),
\end{equation*}
where $C=C(n, \alp, R)$.
\end{theorem}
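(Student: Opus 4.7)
My plan is to follow the classical Campanato approach: establish a decay estimate for the mean oscillation of $\nabla \widetilde{w}$ on concentric balls, then apply a Campanato iteration lemma to convert the decay into the claimed $\mathcal{L}^{2, n+2\alp}$ bound. Fix $x_0 \in B_{R/2}$ and $0 < \rho \leq R/2$. I would split $\widetilde{w} = h + v$ on $B_\rho(x_0)$, where $h$ is the harmonic extension of $\widetilde{w}|_{\partial B_\rho}$ into $B_\rho$, and $v := \widetilde{w} - h$ solves
\begin{equation*}
-\Delta v = \Div(\tilde{\mathbf{f}} - \tilde{\mathbf{f}}_{B_\rho}) \quad \text{in } B_\rho(x_0), \qquad v = 0 \quad \text{on } \partial B_\rho(x_0),
\end{equation*}
where $\tilde{\mathbf{f}}_{B_\rho}$ denotes the mean of $\tilde{\mathbf{f}}$ over $B_\rho(x_0)$; note that $\Div(\tilde{\mathbf{f}}_{B_\rho}) = 0$, which is why subtracting the mean is legitimate.

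For the harmonic part, each component $\partial_i h$ is itself harmonic, so by the mean value property and a standard Caccioppoli argument one obtains the sharp decay
\begin{equation*}
\int_{B_s(x_0)} |\nabla h - (\nabla h)_{B_s(x_0)}|^2 \, dx \leq C \Bigl(\frac{s}{\rho}\Bigr)^{n+2} \int_{B_\rho(x_0)} |\nabla h - (\nabla h)_{B_\rho(x_0)}|^2 \, dx, \qquad 0 < s < \rho.
\end{equation*}
For the correction $v$, testing the equation against $v$ itself and using Cauchy-Schwarz gives the energy bound
\begin{equation*}
\int_{B_\rho(x_0)} |\nabla v|^2 \, dx \leq C \int_{B_\rho(x_0)} |\tilde{\mathbf{f}} - \tilde{\mathbf{f}}_{B_\rho}|^2 \, dx \leq C \rho^{n+2\alp} [\tilde{\mathbf{f}}]_{\mathcal{L}^{2, n+2\alp}(B_R)}^2,
\end{equation*}
using the Campanato characterization of the seminorm. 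Combining these two facts via the triangle inequality (and the identity $\nabla \widetilde{w} = \nabla h + \nabla v$) yields, for all $0 < s < \rho \leq R/2$, the decay estimate
\begin{equation*}
\int_{B_s} |\nabla \widetilde{w} - (\nabla \widetilde{w})_{B_s}|^2 \, dx \leq C \Bigl(\frac{s}{\rho}\Bigr)^{n+2} \int_{B_\rho} |\nabla \widetilde{w} - (\nabla \widetilde{w})_{B_\rho}|^2 \, dx + C \rho^{n+2\alp} [\tilde{\mathbf{f}}]_{\mathcal{L}^{2, n+2\alp}(B_R)}^2.
\end{equation*}

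Finally, since the exponent $n + 2\alp$ lies strictly between $n$ and $n + 2$, I would invoke the standard Campanato iteration lemma (Lemma 5.13 in [gm]) to deduce
\begin{equation*}
\int_{B_s(x_0)} |\nabla \widetilde{w} - (\nabla \widetilde{w})_{B_s(x_0)}|^2 \, dx \leq C s^{n+2\alp} \Bigl( R^{-n-2\alp} \|\nabla \widetilde{w}\|_{L^2(B_R)}^2 + [\tilde{\mathbf{f}}]_{\mathcal{L}^{2, n+2\alp}(B_R)}^2 \Bigr)
\end{equation*}
for every $B_s(x_0) \subset B_{R/2}$. Taking the supremum over $x_0$ and $s$ gives precisely the desired estimate for $\|\nabla \widetilde{w}\|_{\mathcal{L}^{2, n+2\alp}(B_{R/2})}$. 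The main technical step is the decay estimate for harmonic functions; everything else is either an energy estimate or an application of the iteration lemma, both of which are entirely routine. I do not expect real obstacles here, since the structure of the equation is precisely the divergence-form Poisson problem that Campanato theory was designed to handle.
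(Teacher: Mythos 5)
The paper cites this result (Theorem 5.14 in \cite{gm}) without reproducing a proof, so there is no paper-internal argument to compare against; however, your proposal is a correct and complete sketch of the standard Campanato-iteration proof, and it is exactly the approach used in \cite{gm}: harmonic replacement on each ball $B_\rho(x_0)$, the sharp $(s/\rho)^{n+2}$ decay of the $L^2$-mean-oscillation of $\nabla h$ for harmonic $h$, the energy estimate $\|\nabla v\|_{L^2(B_\rho)} \leq \|\tilde{\mathbf{f}} - \tilde{\mathbf{f}}_{B_\rho}\|_{L^2(B_\rho)}$ obtained by testing with $v$ (made possible precisely because subtracting the constant $\tilde{\mathbf{f}}_{B_\rho}$ does not change $\Div\tilde{\mathbf{f}}$), combination of the two via the minimizing property of averages, and finally the Campanato iteration lemma with exponents $n+2\alp < n+2$. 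All the pieces are standard and correctly assembled; nothing is missing.
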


From the proof of Theorem \ref{thm-514} and the equivalence of H\"{o}lder space and Campanato space, we have the following interior estimates.

\begin{corollary}\label{lem-interior-estimate}
Under the hypotheses of Theorem \ref{lem-global-C1alp-estimates}. Let $\widetilde{w}$ be the solution of \eqref{equ-w-divf-q1}. Then for $B_R:=B_R(x_0)\subset Q$,
\begin{equation}\label{ineq-interior-estimate}
[\nabla \widetilde{w}]_{\alp, B_{R/2}}\leq C\left(\frac{1}{R^{1+\alp}}\|\widetilde{w}\|_{L^\infty( B_R)}+[\tilde{\bf{f}}]_{\alp, B_R}\right),
\end{equation}
where $C=C(n, \alp)$.
\end{corollary}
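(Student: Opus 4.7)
The plan is to reduce to the unit scale by rescaling, then combine Theorem \ref{thm-514} with a Caccioppoli inequality and the classical Campanato--H\"{o}lder equivalence. For $B_R = B_R(x_0) \subset Q$, I would set
$$v(y) := \widetilde{w}(x_0 + Ry), \qquad \mathbf{g}(y) := R\tilde{\mathbf{f}}(x_0 + Ry), \qquad y \in B_1 := B_1(0),$$
so that a short computation yields $-\Delta v = \Div \mathbf{g}$ in $B_1$, together with the scaling relations $\|v\|_{L^\infty(B_1)} = \|\widetilde{w}\|_{L^\infty(B_R)}$, $[\mathbf{g}]_{\alpha, B_1} = R^{1+\alpha}[\tilde{\mathbf{f}}]_{\alpha, B_R}$, and $[\nabla_y v]_{\alpha, B_{1/2}} = R^{1+\alpha}[\nabla \widetilde{w}]_{\alpha, B_{R/2}}$. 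It thus suffices to establish the unit-scale estimate $[\nabla v]_{\alpha, B_{1/2}} \leq C\bigl(\|v\|_{L^\infty(B_1)} + [\mathbf{g}]_{\alpha, B_1}\bigr)$ and divide through by $R^{1+\alpha}$.

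For the unit-scale estimate, I would first apply Theorem \ref{thm-514} on the pair $B_{1/2} \subset B_{3/4}$, obtaining
$$\|\nabla v\|_{\mathcal{L}^{2, n+2\alpha}(B_{1/2})} \leq C\bigl(\|\nabla v\|_{L^2(B_{3/4})} + [\mathbf{g}]_{\mathcal{L}^{2, n+2\alpha}(B_{3/4})}\bigr).$$
Since $n < n+2\alpha < n+2$, the space $\mathcal{L}^{2,n+2\alpha}$ is equivalent to $C^{0,\alpha}$, so the left-hand side dominates $[\nabla v]_{\alpha, B_{1/2}}$ and the Campanato seminorm of $\mathbf{g}$ is controlled by $C[\mathbf{g}]_{\alpha, B_1}$.

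The remaining task is to replace $\|\nabla v\|_{L^2(B_{3/4})}$ by a quantity involving only $\|v\|_{L^\infty(B_1)}$ and $[\mathbf{g}]_{\alpha, B_1}$. Here I would use the identity $\Div(\mathbf{g} - \mathbf{g}(0)) = \Div \mathbf{g}$ to rewrite the equation as $-\Delta v = \Div(\mathbf{g} - \mathbf{g}(0))$. Testing this against $\eta^2 v$ for a cutoff $\eta \in C_c^\infty(B_1)$ with $\eta \equiv 1$ on $B_{3/4}$ and $|\nabla \eta| \leq C$, and applying Young's inequality to absorb the $\eta^2 |\nabla v|^2$ terms on the right, gives the Caccioppoli-type bound
$$\int_{B_{3/4}} |\nabla v|^2 \, dy \leq C\|v\|_{L^\infty(B_1)}^2 + C\|\mathbf{g} - \mathbf{g}(0)\|_{L^\infty(B_1)}^2 \leq C\bigl(\|v\|_{L^\infty(B_1)}^2 + [\mathbf{g}]_{\alpha, B_1}^2\bigr),$$
where the last step uses $|\mathbf{g}(y) - \mathbf{g}(0)| \leq [\mathbf{g}]_{\alpha, B_1}|y|^\alpha \leq [\mathbf{g}]_{\alpha, B_1}$ on $B_1$. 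Combining this with the Campanato estimate and rescaling back produces the desired inequality \eqref{ineq-interior-estimate}.

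The main technical point is the constant-subtraction trick in the Caccioppoli step: without replacing $\mathbf{g}$ by $\mathbf{g} - \mathbf{g}(0)$, one would obtain $\|\mathbf{g}\|_{L^\infty}$ on the right, which is stronger than (and not bounded by) $[\mathbf{g}]_{\alpha}$. Once this is done, everything else is fairly routine---Theorem \ref{thm-514} supplies the Campanato regularity gain, the equivalence with H\"{o}lder spaces is classical (\cite{gm}), and the scaling is a direct calculation.
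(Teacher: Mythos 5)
Your approach matches the route the paper has in mind: rescale to unit size, invoke Theorem \ref{thm-514}, control $\|\nabla v\|_{L^2}$ by a Caccioppoli inequality, and use the Campanato--H\"older equivalence. The scaling identities and the constant-subtraction trick (replacing $\mathbf{g}$ by $\mathbf{g}-\mathbf{g}(0)$ so that only $[\mathbf{g}]_{\alpha}$, not $\|\mathbf{g}\|_{L^\infty}$, appears on the right) are correct and are indeed the crux; without the subtraction the resulting bound would not reduce to the stated form.

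One small radius bookkeeping issue should be patched. Theorem \ref{thm-514} applied to the pair $B_{3/4}\subset B_1$ yields a Campanato bound on $B_{3/8}$, not $B_{1/2}$ as you write; and if you instead take $B_R=B_1$ to target $B_{1/2}$ directly, the right-hand side involves $\|\nabla v\|_{L^2(B_1)}$, which cannot be absorbed by a Caccioppoli step since there is no room for the cutoff. The standard fix is a finite covering: cover $B_{1/2}$ by balls $B_{1/8}(y_j)$ with $y_j\in B_{1/2}$, so that $B_{1/4}(y_j)\subset B_{3/4}$; apply Theorem \ref{thm-514} on each pair $B_{1/8}(y_j)\subset B_{1/4}(y_j)$, majorize each $\|\nabla v\|_{L^2(B_{1/4}(y_j))}$ by $\|\nabla v\|_{L^2(B_{3/4})}$, and then use your Caccioppoli bound on $B_{3/4}\subset B_1$. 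With this adjustment the argument is complete and gives the stated constant $C=C(n,\alpha)$.
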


For the boundary estimate, we replace the ball $B_R(x_0)$ in \eqref{ineq-interior-estimate} by the half ball $B^+_R(x_0)=B_R(x_0)\cap \R^n_+$, where $x_0\in \partial \R^n_+$ and $ \R^n_+:=\{x\in \R^n: x_n>0\}$.  

\begin{corollary}\label{lem-halfball-estimate}
Let $\widetilde{w}$ be the solution of 
\begin{equation*}
\left\{
\begin{aligned}
- \Delta \widetilde{w} &= \Div \tilde{\mathbf{f}}\quad &\mbox{in} \quad &\R^n_+\\
\widetilde{w} &=0\quad &\mbox{on} \quad &\partial \R^n_+,
\end{aligned}\right.
\end{equation*}
where $\tilde{\mathbf{f}}\in C^{\alp} (\R^n_+, \R^n)$. Then for $x_0\in \partial \R^n_+$ and $B_R^+:=B^+_R(x_0)$,
\begin{equation}\label{ineq-halfball-estimate}
[\nabla \widetilde{w}]_{\alp, B^+_{R/2}}\leq C\left(\frac{1}{R^{1+\alp}}\|\widetilde{w}\|_{L^\infty( B_R^+)}+[\tilde{\mathbf{f}}\,]_{\alp, \, B_R^+}\right),
\end{equation}
where $C=C(n, \alp)$.
\end{corollary}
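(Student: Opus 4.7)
The plan is to run the Campanato iteration underlying Corollary \ref{lem-interior-estimate}, with balls replaced by half-balls and odd reflection used to handle the vanishing Dirichlet datum on $\ptl\R^n_+$. Take $x_0 = 0$. For each $0 < \rho \leq R$, decompose $\widetilde{w} = h + v$ on $B_\rho^+$, where $h$ is harmonic in $B_\rho^+$ with $h = \widetilde{w}$ on $\ptl B_\rho \cap \R^n_+$ and $h = 0$ on $B_\rho \cap \ptl\R^n_+$, while $v := \widetilde{w} - h$ satisfies $-\Delta v = \Div \tilde{\mathbf{f}}$ in $B_\rho^+$ with $v = 0$ on all of $\ptl B_\rho^+$.

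Extending $h$ by odd reflection across $\{x_n = 0\}$ yields a function $H$ that is harmonic on all of $B_\rho$ (continuity is immediate from $h = 0$ on the flat boundary; harmonicity then follows from Weyl's lemma). The tangential derivatives $\ptl_i H$ ($i < n$) are odd in $x_n$ and hence have zero mean over $B_r$, while $\ptl_n H$ is even in $x_n$ and its full-ball mean equals $(\ptl_n h)_{B_r^+}$. Setting $\mathbf{a}_r := (0', (\ptl_n h)_{B_r^+})$, the standard gradient-decay estimate for harmonic functions applied to $H$ yields
$$\int_{B_r^+} |\nabla h - \mathbf{a}_r|^2\,dx \leq C\Bigl(\frac{r}{\rho}\Bigr)^{n+2} \int_{B_\rho^+} |\nabla h - \mathbf{a}_\rho|^2\,dx, \qquad 0 < r \leq \rho.$$
For the corrector $v$, since $v|_{\ptl B_\rho^+} = 0$, testing $-\Delta v = \Div(\tilde{\mathbf{f}} - \tilde{\mathbf{f}}(0))$ against $v$ and invoking the Poincar\'e inequality gives
$$\int_{B_\rho^+} |\nabla v|^2\,dx \leq C \int_{B_\rho^+} |\tilde{\mathbf{f}} - \tilde{\mathbf{f}}(0)|^2\,dx \leq C \rho^{n+2\alp}\, [\tilde{\mathbf{f}}]_{\alp, B_R^+}^2.$$

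Adding the two bounds produces the hole-filling relation
$$\Phi(r) \leq C\Bigl(\frac{r}{\rho}\Bigr)^{n+2} \Phi(\rho) + C\rho^{n+2\alp}\, [\tilde{\mathbf{f}}]_{\alp, B_R^+}^2, \qquad 0 < r \leq \rho \leq R/2,$$
where $\Phi(r) := \inf_{\mathbf{a} \in \R^n} \int_{B_r^+} |\nabla\widetilde{w} - \mathbf{a}|^2\,dx$. The standard iteration lemma (Lemma 2.1 of \cite{gm}) then gives $\Phi(r) \leq C r^{n+2\alp}(\cdots)$ on $B_{R/2}^+$. Combined with the interior Corollary \ref{lem-interior-estimate} (applied at centers $y$ with $y_n > 0$ on small scales, and reduced to the boundary case at the projection $(y',0)$ on larger scales), this delivers a uniform $\mathcal{L}^{2, n+2\alp}$-bound on $\nabla\widetilde{w}$ over $B_{R/2}^+$, which by Campanato's characterization is equivalent to a bound on $[\nabla\widetilde{w}]_{\alp, B_{R/2}^+}$. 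Finally the $L^2$-energy $\int_{B_R^+} |\nabla\widetilde{w}|^2$ is absorbed into $\|\widetilde{w}\|_{L^\infty(B_R^+)}$ through a Caccioppoli inequality with vanishing boundary trace (legitimate because $\widetilde{w} = 0$ on $B_R \cap \ptl\R^n_+$), producing \eqref{ineq-halfball-estimate}.

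The main obstacle is identifying the correct ``symmetrized'' mean $\mathbf{a}_r$ in the decay estimate for $h$. Using the naive half-ball average $(\nabla h)_{B_r^+}$ would break additivity when adding the $h$ and $v$ bounds: the tangential components of $\mathbf{a}_r$ must vanish—forced by the odd-reflection symmetry of $H$—and only this choice lets the harmonic-part decay and the corrector error term combine into a closed iteration. Once this bookkeeping is in place, the argument runs exactly parallel to the interior proof of Corollary \ref{lem-interior-estimate}.
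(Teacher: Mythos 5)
Your reflection-plus-Campanato-iteration argument is the standard proof of the half-ball estimate, and it is exactly what the paper has in mind: the paper gives no proof, merely remarking that one replaces the ball in Corollary~\ref{lem-interior-estimate} by a half-ball, and refers to \cite{gm}. Your filling-in of the details is correct in substance, including the correct identification of the odd reflection, the parity of the gradient components of $H$, and the role of the symmetrized comparison vector $\mathbf{a}_r=(0',(\partial_n h)_{B_r^+})$.

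One small technical slip worth flagging: the displayed hole-filling relation with
$\Phi(r):=\inf_{\mathbf{a}\in\R^n}\int_{B_r^+}|\nabla\widetilde w-\mathbf{a}|^2$
on \emph{both} sides does not directly close. The harmonic-decay step compares against $\mathbf{a}_\rho=(0',(\partial_nh)_{B_\rho^+})$, and
$\int_{B_\rho^+}|\nabla h-\mathbf{a}_\rho|^2=\inf_{a_n}\int_{B_\rho^+}|\nabla h-(0',a_n)|^2$
is a \emph{constrained} infimum (tangential component forced to vanish); there is no reason this should be controlled by the unconstrained $\Phi(\rho)$, since the optimal $\mathbf{b}$ for $\Phi(\rho)$ may have a large tangential part, making $\Phi(\rho)$ strictly smaller than the constrained excess. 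The iteration closes cleanly if one instead works with
$\Psi(\rho):=\inf_{a_n\in\R}\int_{B_\rho^+}|\nabla\widetilde w-(0',a_n)|^2$,
for which
$\Psi(r)\le C(r/\rho)^{n+2}\Psi(\rho)+C\rho^{n+2\alp}[\tilde{\mathbf{f}}]_{\alp,B_R^+}^2$
does hold; since $\Phi\le\Psi$, the resulting decay $\Psi(r)\lesssim r^{n+2\alp}(\cdots)$ immediately yields the same decay for $\Phi$ and hence the Campanato bound you want. You clearly understand this (your ``main obstacle'' paragraph says exactly that the tangential part of $\mathbf{a}_r$ must vanish), so it is a notational mismatch rather than a conceptual gap, but as written the iteration inequality for $\Phi$ is not literally justified.
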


Now, we are in the position to prove Theorem \ref{lem-global-C1alp-estimates}.

\begin{proof}[Proof of Theorem \ref{lem-global-C1alp-estimates}]
Since $\Gamma \in C^{1,\alp}$, then for each point $x_0\in \Gamma$, there exists a neighbourhood $U$ of $x_0$ and a homeomorphism $\mathbf{\Psi}\in C^{1, \, \alp} (U)$ such that 
\begin{align*}
&\mathbf{\Psi} (U \cap Q )=\mathfrak{B}_1^+ = \{y\in \mathfrak{B}_1(0) \, : \, y_n>0 \},\\
&\mathbf{\Psi}  (U\cap\Gamma ) = \partial \mathfrak{B}_1^+ \cap \{y\in \R^n \, : \, y_n=0 \},
\end{align*}
where $\mathfrak{B}_1(0):=\{y\in \R^n : |y|< 1\}$. Under the transformation $y=\mathbf{\Psi} (x)=(\Psi_1(x), \cdots,\Psi_n(x))$, we denote
$$\mathcal{W}(y):=\widetilde{w}(\mathbf{\Psi} ^{-1}(y)),\quad \mathcal{F}(y):= \tilde{\mathbf{f}}(\mathbf{\Psi} ^{-1}(y)),$$ and
\begin{align*}
\mathcal{A}(y):=|\mathcal{J}(y)|(\mathcal{J}\mathcal{J}^{T})(y),\quad \mathcal{G}(y):=|\mathcal{J}(y)|\mathcal{J}(y), \quad  \mathcal{J}(y):=\frac{\partial(\Psi_1, \, \cdots \, \Psi_n)}{\partial (x_1, \, \cdots \, x_n)}\circ\mathbf{\Psi}^{-1}(y).
\end{align*}
Then \eqref{equ-w-divf-q1} becomes
\begin{equation}\label{equ-halfball-y}
-\Div_y \Big(\mathcal{A}(y) \nabla_y \mathcal{W}(y)\Big)= \Div_y\big( \mathcal{G}(y) (\mathcal{F}(y)-\mathfrak{a})\big),
\end{equation}
where $\mathfrak{a}\in \R^n$ is a constant vector to be determined later. Let $y_0= \mathbf{\Psi} (x_0)$, freeze the coefficients, and rewrite \eqref{equ-halfball-y} in the form 
\begin{equation}\label{equ-freeze-coefficients}
-\Div_y \Big(\mathcal{A}(y_0) \nabla_y \mathcal{W}(y)\Big) = \Div_y \Big( \big(\mathcal{A}(y)- \mathcal{A}(y_0)\big) \nabla_y \mathcal{W}(y)+ \mathcal{G}(y) (\mathcal{F}(y)-\mathfrak{a})\Big).
\end{equation}
Since $\mathbf{\Psi}$ is a homeomorphism, $\mathcal{A} (y_0)$ is positive definite. Then there exists a nonsingular constant matrix $\mathbf{P}$ such that $\mathbf{P}^T\mathcal{A} (y_0) \mathbf{P}=\mathbf{I}_n$, where $\mathbf{I}_n$ is the $n\times n$ identity matrix. Thus, under the transformation $z=\mathbf{P}^T y$, \eqref{equ-freeze-coefficients} becomes
\begin{align*}
-\Delta_z W(z) = \Div_z\Big( \mathbf{P}^T\big( \mathbf{A}(z)- \mathbf{A}(z_0)\big) \mathbf{P}\nabla_z W(z)+ \mathbf{P}^T \mathbf{G}(z) (\mathbf{F}(z)-\mathfrak{a})\Big) ,
\end{align*}
where $z_0=\mathbf{P}^T y_0$ and $W(z):=\mathcal{W}\big((\mathbf{P}^T)^{-1}z\big)$,
\begin{equation*}
\mathbf{A}(z):= \mathcal{A}\big((\mathbf{P}^T)^{-1}z\big), \quad
\mathbf{G}(z):= \mathcal{G}\big((\mathbf{P}^T)^{-1}z\big), ~\text{and}~
\mathbf{F}(z): =\mathcal{F}\big((\mathbf{P}^T)^{-1}z\big).
\end{equation*}
Then, by virtue of Corollary \ref{lem-halfball-estimate}, we have
\begin{align*}
\big[\nabla_z W\big]_{\alp,\, \mathcal{B}^+_{R/2}}\leq &C \left(\frac{1}{R^{1+\alp}}\| W\|_{L^\infty( \mathcal{B}^+_{R})}+\big[ \mathbf{P}^T \mathbf{G}(\mathbf{F}-\mathfrak{a})\big]_{\alp,\, \mathcal{B}^+_{R}} \right)\\
&\quad+C\big[\mathbf{P}^T \big (\mathbf{A}- \mathbf{A}(z_0)\big)\mathbf{P} \nabla_zW \big]_{\alp, \, \mathcal{B}^+_{R} },
\end{align*}
where $\mathcal{B}_{R}^+:=\{z\in \mathcal{B}_{R}(z_0) \, : \, z_n > 0\}$ and $ \mathcal{B}_{R}(z_0):=\{z\in \R^n : |z-z_0|<R\}$. Since $\mathbf{\Psi} \in C^{1, \, \alp}$, by taking 
\begin{equation*}
\mathfrak{a}=\mathbf{F}_{\mathcal{B}_R^+}:=\frac{1}{|\mathcal{B}_R^+|}\int_{\mathcal{B}_R^+}\mathbf{F}(z)\,dz,
\end{equation*}
we have
\begin{align*}
\big[ \mathbf{P}^T \mathbf{G}(\mathbf{F}-\mathbf{F}_{\mathcal{B}_R^+})\big]_{\alp,\, \mathcal{B}^+_{R}}
\leq C\left(  [\mathbf{F}]_{\alp,\, \mathcal{B}^+_{R}}+\|\mathbf{F}-\mathbf{F}_{\mathcal{B}_R^+}\|_{L^\infty(\mathcal{B}_R^+)}\right)\leq C   [\mathbf{F}]_{\alp,\, \mathcal{B}^+_{R}},
\end{align*}
and
\begin{align*}
\big[\mathbf{P}^T \big(\mathbf{A}- \mathbf{A}(z_0)\big)\mathbf{P} \nabla_z W \big]_{\alp, \, \mathcal{B}^+_{R} } \leq C \left( R^\alp [\nabla_z W]_{\alp, \, \mathcal{B}^+_{R} } + \|\nabla_z W\|_{L^\infty( \mathcal{B}^+_{R})}\right).
\end{align*}
By using the  interpolation inequality, one has 
\[\|\nabla_z W\|_{L^\infty(\mathcal{B}^+_{R})} \leq R^{\alp }[\nabla_z W]_{\alp, \, \mathcal{B}^+_{R} } + \frac{C}{R}\| W\|_{L^\infty( \mathcal{B}^+_{R})},\]
where $C=C(n)$. Hence, 
\begin{align}\label{ineq-tilde-W}
\big[\nabla_zW\big]_{\alp,\, \mathcal{B}^+_{R/2}}
\leq C \left(\frac{1}{R^{1+\alp}}\|W\|_{L^\infty( \mathcal{B}^+_{R})}+R^\alp [\nabla_z W]_{\alp, \, \mathcal{B}^+_{R} }+ [\mathbf{F}]_{\alp,\, \mathcal{B}^+_{R}} \right). 
\end{align}
Since $\mathbf{\Psi}$ is a homeomorphism and $\mathbf{P}$ is nonsingular, it follows that the norms in \eqref{ineq-tilde-W} defined on  $\mathcal{B}^+_{R}$ are equivalent to those on $\mathcal{N}=(\mathbf{P}^T\circ\mathbf{\Psi} )^{-1}(\mathcal{B}^+_{R})$, respectively.  Thus, rescaling back to the variable $x$, we obtain
\[\big[\nabla \widetilde{w}\big]_{\alp,\, \mathcal{N}^\prime} \leq C \left(\frac{1}{R^{1+\alp}}\|\widetilde{w}\|_{L^\infty(\mathcal{N})}+R^\alp [\nabla \widetilde{w}]_{\alp, \, \mathcal{N} }+ [\tilde{\mathbf{f}}]_{\alp,\, \mathcal{N}} \right),\]
where  $\mathcal{N}^\prime=(\mathbf{P}^T\circ\mathbf{\Psi} )^{-1}(\mathcal{B}^+_{R/2})$ and $C=C(n, \alp,\mathbf{\Psi} , \mathbf{P})$. Furthermore, there exists a constant $0<\sigma<1$ independent on $R$ such that $ B_{\sigma R}(x_0)\cap Q \subset \mathcal{N}^\prime$. 

Therefore, recalling that $\Gamma \subset \ptl Q$ is a boundary portion, for any domain $Q'\subset\subset Q\cup \Gamma$ and for each $x_0\in Q^\prime \cap\Gamma$, there exist  $\mathcal{R}_{0}=\mathcal{R}_{0}(x_0)$ and $C_0=C_0(n, \alp, x_0)$ such that
\begin{align}\label{ineq-boundary-estimates}
\big[\nabla \widetilde{w}\big]_{\alp,\, B_{\mathcal{R}_{0}}(x_0)\cap Q^\prime} 
\leq C_0 \left(\mathcal{R}_{0}^\alp [\nabla \widetilde{w}]_{\alp, \, Q^\prime}+\frac{1}{\mathcal{R}_{0}^{1+\alp}}\|\widetilde{w}\|_{L^\infty( Q)}+ [\tilde{\mathbf{f}}]_{\alp,\, Q} \right).
\end{align}

Applying the finite covering theorem to the collection of $B_{\mathcal{R}_{0}/2}(x_0)$ for all $x_0\in \Gamma \cap Q^\prime$, there exist finite  $B_{\mathcal{R}_{j}/2}(x_j)$, $j=1, 2, \, ...\, K$, covering  $  \Gamma \cap Q^\prime $. Let $C_{j}$ be the constant in \eqref{ineq-boundary-estimates} corresponding to $x_j$. Set 
\[\widehat{C}:=\max\limits_{1\leq j\leq K}\big\{C_{j}\big\},\quad\widehat{\mathcal{R}}:=\min\limits_{1\leq j\leq K}\big\{\frac{\mathcal{R}_{j}}{2}\big\}.\]
Thus, for any $x_0\in  \Gamma \cap Q^\prime$, there exists $j_0\in \{1, 2, \, ...\, ,K \}$ such that
$B_{\widehat{\mathcal{R}}}(x_0)\subset B_{\mathcal{R}_{j_0}}(x_{j_0})$ and
\begin{align}\label{ineq-local-estimates}
\big[\nabla \widetilde{w}\big]_{\alp,\, B_{\widehat{\mathcal{R}}}(x_0)\cap Q^\prime} 
\leq \widehat{C} \left(\widehat{\mathcal{R}}^\alp [\nabla \widetilde{w}]_{\alp, \, Q^\prime }+\frac{1}{\widehat{\mathcal{R}}^{1+\alp}}\|\widetilde{w}\|_{L^\infty( Q)}+ [\tilde{\mathbf{f}}]_{\alp,\, Q} \right).
\end{align}

Finally, we give the estimates  on $Q^\prime$. Let $\widetilde{C}$ be the constant in \eqref{ineq-interior-estimate} from Corollary \ref{lem-interior-estimate}. Let
\[\overline{C}:=\max\{\widehat{C}, \widetilde{C}\}\quad \text{and}\quad \overline{\mathcal{R}}:=\min\{(3\overline{C})^{-1/\alp}, \widehat{\mathcal{R}}\}.\]
For any $x^1, \, x^2\in Q^\prime $, there are three cases to occur:
\begin{itemize}
\item[(i)\,\,] $|x^1- x^2|\geq \frac{\overline{\mathcal{R}}}{2}$;

\item[(ii)\,] there exists $1\leq j_0\leq K$ such that $x^1, \, x^2\in B_{\overline{\mathcal{R}}/2}(x_{j_0})\cap Q^\prime $;

\item[(iii)]  $x^1, \, x^2\in B_{\overline{\mathcal{R}}/2}\subset
Q^\prime $.
\end{itemize}
For case (i), we have
\[\frac{|\nabla \widetilde{w}(x^1)-\nabla \widetilde{w}(x^2)|}{|x^1- x^2|^{\alp}}\leq \frac{4}{\overline{\mathcal{R}}^\alp}\|\nabla \widetilde{w}\|_{\infty, \, Q^\prime }.\]
For case (ii), it follows from (\ref{ineq-local-estimates}) that
\[
\begin{aligned}
\frac{|\nabla \widetilde{w}(x^1)-\nabla \widetilde{w}(x^2)|}{|x^1- x^2|^{\alp}}& \leq [\nabla \widetilde{w}]_{\alp, \, B_{\overline{\mathcal{R}}/2}(x_{j_0})\cap Q^\prime }\leq [\nabla \widetilde{w}]_{\alp, \, B_{\overline{\mathcal{R}}}(x_{j_0})\cap Q^\prime }\\
& \leq \overline{C} \left(\overline{\mathcal{R}}^\alp [\nabla \widetilde{w}]_{\alp, \, Q^\prime}
+\frac{1}{\overline{\mathcal{R}}^{1+\alp}}\|\widetilde{w}\|_{L^\infty(Q)}+ [\tilde{\mathbf{f}}]_{\alp,\, Q} \right).
\end{aligned}
\]
For case (iii), by using Corollary \ref{lem-interior-estimate}, one has
\[
\begin{aligned}
\frac{|\nabla \widetilde{w}(x^1)-\nabla \widetilde{w}(x^2)|}{|x^1- x^2|^{\alp}}& \leq [\nabla \widetilde{w}]_{\alp, \, B_{\overline{\mathcal{R}}/2}}
\leq \overline{C} \left(\frac{1}{\overline{\mathcal{R}}^{1+\alp}}\|\widetilde{w}\|_{L^\infty( Q)}+ [\tilde{\mathbf{f}}]_{\alp,\, Q} \right).
\end{aligned}
\]
Hence, in either case, we obtain
\begin{equation*}
[\nabla \widetilde{w}]_{\alp, \, Q^\prime } \leq \overline{C}\left(\overline{\mathcal{R}}^\alp [\nabla \widetilde{w}]_{\alp, \, Q^\prime  }+\frac{1}{\overline{\mathcal{R}}^{1+\alp}}\|\widetilde{w}\|_{L^\infty( Q)}+[\tilde{\mathbf{f}}]_{\alp,\, Q}\right)  +  \frac{4}{\overline{\mathcal{R}}^\alp}\|\nabla \widetilde{w}\|_{L^\infty( Q^\prime )}.
\end{equation*} 
By the interpolation inequality, see e.g. \cite[Lemma 6.32]{gt},
\[
\begin{aligned}
\frac{4}{\overline{\mathcal{R}}^\alp}\|\nabla \widetilde{w}\|_{L^\infty( Q^\prime )} &\leq \frac{1}{3} [\nabla \widetilde{w}]_{\alp, \, Q^\prime } + \frac{C}{\overline{\mathcal{R}}^{1+\alp}}\|\widetilde{w}\|_{L^\infty( Q^\prime )}\\
&\leq \frac{1}{3} [\nabla \widetilde{w}]_{\alp, \, Q^\prime } + \frac{C}{\overline{\mathcal{R}}^{1+\alp}}\|\widetilde{w}\|_{L^\infty(Q)},
\end{aligned}
\]
where $C=C(n, \, \alp)$. Since $\overline{\mathcal{R}}\leq (3\overline{C})^{-1/\alp}$, we get
\begin{equation}\label{ineq-global-estimates}
\big[\nabla \widetilde{w}\big]_{\alp,\, Q^\prime } \leq C\left( \|\widetilde{w}\|_{L^\infty( Q)}+[\tilde{\mathbf{f}}\,]_{\alp,\, Q}\right),
\end{equation}
where $C=C(n, \alp, Q^\prime, Q)$. By using the interpolation inequality, we obtain \eqref{ineq-global-C1alp-estimates}.
\end{proof}

\subsection{De Giorgi-Nash estimates}

In this subsection, we use  De Giorgi-Nash approach, see \cite[Theorem 8.15]{gt}, to obtain the $L^{\infty}$ estimate of $\widetilde{w}$  which is used to prove Proposition \ref{prop-interior}.

\begin{lemma}\label{lem-infty-estimates}
 Let $\widetilde{w}$ be the solution of \eqref{equ-w-divf-Q1} with $\nabla \widetilde{u} \in C^{\alp}(Q_1, \R^n)$, where $Q_1$ is defined in \eqref{def-Q1}. Then there exists a positive constant $C=C(n,\alp , Q_1)$ such that
\begin{equation*}
\|\widetilde{w}\|_{L^\infty(Q_{1/2})} \leq C \left(\|\widetilde{w}\|_{L^2(Q_1)}+[\nabla \widetilde{u}]_{\alp,\,  Q_1}\right).
\end{equation*}
\end{lemma}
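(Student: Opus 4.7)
The plan is to reduce the right-hand side to an $L^\infty$ perturbation and then run the classical De Giorgi--Nash--Moser iteration, adapted to the fact that $\widetilde w$ vanishes only on the top/bottom portion $\Gamma_1^\pm$ of $\partial Q_1$. As a first step I would exploit the gauge freedom: for any constant vector $c \in \R^n$, one has $\Div(\nabla\widetilde u - c) = \Div(\nabla\widetilde u)$, so I fix $y_0 \in Q_1$, set $F := \nabla\widetilde u - (\nabla\widetilde u)(y_0)$, and note that since $Q_1$ has bounded diameter,
$$\|F\|_{L^\infty(Q_1)} \leq (\mathrm{diam}\,Q_1)^{\alp}\,[\nabla\widetilde u]_{\alp,\,Q_1} \leq C(Q_1)\,[\nabla\widetilde u]_{\alp,\,Q_1}.$$
The equation then becomes $-\Delta\widetilde w = \Div F$ in $Q_1$ with $F \in L^\infty(Q_1)$ and $\widetilde w = 0$ on $\Gamma_1^\pm$.

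Next I would perform Moser iteration on $v = (\widetilde w)_+$ (and symmetrically on $(-\widetilde w)_+$). I choose radii $1/2 = r_\infty < \cdots < r_1 < r_0 = 1$ and purely tangential cutoffs $\eta_j(y')$ with $\eta_j \equiv 1$ on $|y'| \leq r_{j+1}$, $\eta_j \equiv 0$ on $|y'| \geq r_j$, and $|\nabla\eta_j| \leq C(r_j - r_{j+1})^{-1}$. Testing the equation against $\eta_j^{\,2} v^{2k+1}$ and integrating by parts, the boundary flux on $\Gamma_1^\pm$ vanishes because $v = 0$ there, and the flux on $\{|y'| = r_j\}$ vanishes because $\eta_j = 0$ there. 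A routine Caccioppoli-type computation using Young's inequality then yields
$$\int \eta_j^{\,2} v^{2k} |\nabla v|^2 \,dy \leq C(k+1)\Bigl((r_j - r_{j+1})^{-2} \int_{\{|y'| < r_j\}\cap Q_1} v^{2k+2}\,dy + \|F\|_{L^\infty(Q_1)}^2 \, |Q_1|\Bigr).$$
The function $\eta_j v$ extends by zero across both $\Gamma_1^\pm$ and $\{|y'| = r_j\}$, so it lies in $H^1_0$ of a bounded Lipschitz domain of fixed proportions, and the Sobolev embedding applies with constants depending only on $n$ and $Q_1$. Combining with the Caccioppoli bound yields the standard iteration
$$\|v\|_{L^{\chi p_j}(Q_{r_{j+1}})} \leq (C b^{j})^{1/p_j}\bigl(\|v\|_{L^{p_j}(Q_{r_j})} + \|F\|_{L^\infty(Q_1)}\bigr),$$
with $\chi = n/(n-2)$ if $n \geq 3$ (any $\chi > 1$ works if $n=2$), $p_j = 2\chi^j$, and $Q_r := Q_1 \cap \{|y'| < r\}$. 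Iterating and sending $j \to \infty$ gives $\|\widetilde w\|_{L^\infty(Q_{1/2})} \leq C(\|\widetilde w\|_{L^2(Q_1)} + \|F\|_{L^\infty(Q_1)})$, and combining with the first step completes the proof.

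The step most in need of care is keeping the cutoff purely tangential: had $\eta$ depended on $y_n$, its gradient would interact with the curved top and bottom of $Q_1$, forcing constants that depend on the curvature of $\Gamma_1^\pm$ through the rescaling. By taking $\eta = \eta(y')$, only tangential derivatives appear in the Caccioppoli identity, and the vanishing of $v$ on $\Gamma_1^\pm$ together with the cutoff property on $\{|y'| = r_j\}$ makes $\eta v$ an admissible Sobolev test function on a domain of unit proportions. Once this geometric point is arranged, the argument is the textbook De Giorgi--Nash--Moser $L^\infty$ estimate.
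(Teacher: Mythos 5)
Your overall strategy is the same as the paper's: reduce the right-hand side to an $L^\infty$ (or $L^q$) perturbation, then run a De Giorgi--Nash--Moser iteration using purely tangential cutoffs $\eta(y')$ so that the test function $\eta v$ extends by zero to an $H^1_0$ function on a Lipschitz domain of fixed proportions. Your observation about the tangential cutoff is exactly the geometric point the paper relies on, and your initial reduction via $F := \nabla\widetilde u - (\nabla\widetilde u)(y_0)$ is a clean variant of the paper's subtraction of $\tilde{\mathbf{f}}_{Q_1}$ at the end.

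There is, however, a genuine gap in the iteration as you have written it. You arrive at
\begin{equation*}
\|v\|_{L^{\chi p_j}(Q_{r_{j+1}})} \leq (C b^{j})^{1/p_j}\bigl(\|v\|_{L^{p_j}(Q_{r_j})} + M\bigr), \qquad M := \|F\|_{L^\infty(Q_1)},
\end{equation*}
with $p_j = 2\chi^j$ and $v = \widetilde{w}^+$. Setting $A_j := \|v\|_{L^{p_j}(Q_{r_j})}$ and $c_j := (Cb^j)^{1/p_j}$, iteration gives
\begin{equation*}
A_J \leq \Bigl(\prod_{j<J} c_j\Bigr) A_0 + M \sum_{j=0}^{J-1} \prod_{i=j}^{J-1} c_i.
\end{equation*}
The product $\prod_{j\geq 0} c_j$ converges because $\log c_j = (\log C + j\log b)/(2\chi^j)$ is summable, but each partial product $\prod_{i=j}^{J-1} c_i \geq 1$ (assuming $C,b\geq 1$), so the sum multiplying $M$ grows like $J$ and diverges as $J\to\infty$. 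The inequality you obtain for $A_J$ therefore degenerates and does not yield the claimed $L^\infty$ bound. The standard (and, in this setting, necessary) fix is to fold the forcing into the iterate from the outset: set $\psi := \widetilde{w}^+ + k$ with $k\geq \|F\|_{L^\infty(Q_1)}$ (or $k = \|\tilde{\mathbf{f}}\|_{L^q(Q_1)}$ as in the paper). Then in the Caccioppoli step, the Young's-inequality term $\int \eta^2 |F|^2 \psi^{2\beta-2}$ is controlled by $\int\eta^2\psi^{2\beta}$ because $|F|\leq k\leq\psi$, and the iteration becomes the clean multiplicative form $\|\psi\|_{L^{2\beta\chi}(Q_{r_{j+1}})} \leq (C\beta/(r_j-r_{j+1}))^{1/\beta}\|\psi\|_{L^{2\beta}(Q_{r_j})}$ with no additive remainder, which does converge. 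This is exactly why the paper defines $\psi = \widetilde{w}^+ + k$ before testing. A second, more minor omission: testing directly with $\eta^2 v^{2\beta+1}$ presupposes that $v^{2\beta+1}\in H^1$, which is not known a priori; the paper circumvents this by using the truncated $H$ (linear beyond $N$) and letting $N\to\infty$ afterward, and you would need the same device or an equivalent approximation argument.
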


\begin{proof}
For simplicity, we denote $\tilde{\mathbf{f}}:=\nabla \widetilde{u}$. Let $\bt \geq 1$, $N>k$, we choose a function $H\in C^1([k, \infty))$ by setting $H(t)=t^\bt -k^\bt$ for $t\in [k, N]$ and taking to be linear for $t\in [N, \infty)$.
We set $\psi= \widetilde{w}^+ +k$ and take
\[v=G(\psi)= \int_{k}^{\psi}|H^\prime (s)|^2ds\]
with $k=\|\tilde{\mathbf{f}}\|_{L^q(Q_1)}$ for $q > n$. Then, we take $\eta^2 v$ as a test function, where the cut-off function $\eta(y^\prime )$ satisfies $\eta(y^\prime )=1$ for $|y^\prime|\leq 1/2$  and $\eta(y^\prime )=0$ for $|y^\prime|=1$. Integrating by parts, using Young's inequality, and observing that $\int_{Q_1} \eta G(\psi)\nabla \eta \nabla \widetilde{w}^-\ dy=0$ and $G(s)\leq G^\prime(s)s$, one has
\[\int_{Q_1}\eta^2G^\prime(\psi)|\nabla \psi|^2\ dy\leq C\int_{Q_1}|\nabla \eta|^2 G^\prime(\psi)\psi^2\ dy + 4\int_{Q_1}\eta^2\frac{|\tilde{\mathbf{f}}|^2}{k^2} G^\prime(\psi)\psi^2 \ dy.\]
It follows from the definition of $G$ and the H\"{o}lder inequality that
\begin{equation} \label{ineq-l-infty-esti}
\int_{Q_1}|\eta\nabla H(\psi)|^2\ dy  
\leq C\int_{Q_1}|\nabla \eta|^2| H^\prime(\psi)\psi|^2\ dy+ C\left(\int_{Q_1}\left|\eta H^\prime(\psi)\psi\right|^{\frac{2q}{q-2}}\ dy \right)^{\frac{q-2}{q}}.
\end{equation}
By the interpolation inequality,  one has, for any $\lam >0$,
\begin{equation}\label{ineq-interpolation}
\|\eta H^\prime (\psi)\psi \|_{L^\frac{2q}{q-2}(Q_1)}\leq \lam \|\eta H^\prime (\psi)\psi\|_{L^\frac{2\hat{n}}{\hat{n}-2}( Q_1)}+\lam^{-\frac{\hat{n}}{q-\hat{n}}}\|\eta H^\prime (\psi)\psi\|_{L^2( Q_1)},
\end{equation}
where  $\hat{n}=n$ for $n>2$ and $\hat{n}\in (2, q) $ for $n=2$. Moreover, noting that $\eta H(\psi)\in H_0^1(Q_1)$, it follows from the Sobolev inequality that
\begin{equation}\label{ineq-sobolev}
\|\eta H(\psi)\|_{L^\frac{2\hat{n}}{\hat{n}-2}(Q_1)}\leq C\left(\int_{Q_1} |\eta \nabla H(\psi)|^2\ dy+\int_{Q_1}|H(\psi)\nabla \eta|^2\ dy\right)^{1/2},
\end{equation}
where $C=C(\hat{n})$. Then, combining with \eqref{ineq-l-infty-esti}--\eqref{ineq-sobolev}, we have
\begin{align*}
\|\eta H(\psi)\|^2_{L^\frac{2\hat{n}}{\hat{n}-2}( Q_1)} &\leq C\left(\lam^2\|\eta H^\prime(\psi)\psi\|^2_{L^\frac{2\hat{n}}{\hat{n}-2}( Q_1)}+\lam^{-\frac{2\hat{n}}{q-\hat{n}}}\|\eta H^\prime(\psi)\psi\|^2_{L^2(Q_1)}\right)\\
&\qquad+C\int_{Q_1}|\nabla \eta|^2\left(|H^\prime(\psi)\psi|^2+H^2(\psi)\right)\ dy.
\end{align*}
Take $\lam$ small such that
\begin{equation*}
\|\eta H(\psi )\|^2_{L^\frac{2\hat{n}}{\hat{n}-2}(Q_1)} \leq C\left(\int_{Q_1} |\eta H^\prime (\psi)\psi|^2\ dy+ \int_{Q_1} |\nabla \eta|^2 \big(|H^\prime (\psi)\psi|^2+H^2(\psi)\big)\ dy\right),
\end{equation*}
where $C=C(n, q)$. Letting $N \to \infty$, one has
\begin{equation}\label{ineq-psi-bt}
\|\eta \psi^\bt \|_{L^\frac{2\hat{n}}{\hat{n}-2}( Q_1)}\leq C\bt \left(\int_{Q_1} (\eta^2+|\nabla \eta|^2)\psi^{2\bt}\ dy\right)^\frac{1}{2}.
\end{equation}

Let $r_1, \, r_2$ be such that $1/2\leq r_1< r_2 \leq 1$ and set $\eta =1$ for $|y^\prime|\leq r_1$, $\eta =0$ for $|y^\prime|\geq r_2$ with $|\nabla \eta | \leq 2/(r_2-r_1)$. Writing $\chi= \hat{n}/(\hat{n}-2)$ in \eqref{ineq-psi-bt}, we have
\begin{equation*}
\|\psi\|_{L^{2\bt \chi}( Q_{r_1})}\leq \left(\frac{C\bt }{r_2-r_1}\right)^{\bt^{-1}}
\|\psi\|_{L^{2\bt}(Q_{r_2})}.
\end{equation*}
Then, iterating by $\bt= \chi^j$ and $r_j=\frac{1}{2}+\frac{1}{4^{j+1}}$, $j=0,\, 1, \cdots$, we have
\begin{equation*}
\begin{aligned}
\|\psi \|_{L^{2\chi^{j+1}}(Q_{r_{j+1}})} &\leq \left(\frac{C\chi^j}{r_j-r_{j+1}}\right)^{\chi^{-j}}
\|\psi \|_{L^{2\chi^j}(Q_{r_j})}\\
& \leq C^{(\chi^{-j}+\cdot\cdot\cdot+\chi^{-1}+1)}\cdot \chi^{(j\chi^{-j}+\cdot\cdot\cdot+2\chi^{-2}+\chi^{-1})}\|\psi\|_{L^2( Q_1)}.
\end{aligned}
\end{equation*}
Letting $j\to \infty$,
\[\|\psi\|_{L^\infty(Q_{1/2})}\leq C \|\psi\|_{L^2(Q_1 )},\]
where $C=C(n, q, Q_1)$. Recalling that $\psi = \widetilde{w}^++k$, a direct calculation yields
\begin{equation}\label{infty-esti-+}
\|\widetilde{w}^+\|_{L^\infty( Q_{1/2})}\leq C (\|\widetilde{w}\|_{L^2( Q_1)}+\|\tilde{\mathbf{f}}\|_{L^q( Q_1)}).
\end{equation}
We observe that \eqref{infty-esti-+} is also valid by replacing $\widetilde{w}$ with $-\widetilde{w}$. Thus, we have 
\begin{equation}\label{infty-esti-whole}
\|\widetilde{w}\|_{L^\infty( Q_{1/2})}\leq C (\|\widetilde{w}\|_{2, Q_1}+\|\tilde{\mathbf{f}}\|_{L^q( Q_1)}),
\end{equation}
where $q>n$ and $C=C(n, q, Q_1)$.

Since $\widetilde{w}$ still satisfies
\begin{equation*}
-\Delta \widetilde{w}= \Div (\tilde{\mathbf{f}}- \tilde{\mathbf{f}}_{Q_1}),
\end{equation*}
following the proof of  \eqref{infty-esti-whole}, we have
\begin{equation*}
\begin{aligned}
\|\widetilde{w}\|_{L^\infty(Q_{1/2})}&\leq C \left(\|\widetilde{w}\|_{L^2(Q_1)}+\|\tilde{\mathbf{f}}- \tilde{\mathbf{f}}_{Q_1}\|_{L^q(Q_1)}\right)\\
&\leq C \left(\|\widetilde{w}\|_{L^2(Q_1)}+[\tilde{\mathbf{f}}]_{\alp,\,  Q_1}\right),
\end{aligned}
\end{equation*}
where $C=C(n,\alp , Q_1)$. The proof is completed.
\end{proof}



\noindent
{\bf{\large Acknowledgements.}}
The authors would like to thank Dr. Hongjie Ju for valuable discussion. Y. Chen was partially supported by Postdoctoral Science Foundation of China No. 2018M631369. H.G. Li was partially supported by NSF in China No. 11571042, 11631002.



\begin{thebibliography}{99}



\bibitem{akl} H. Ammari; H. Kang; M. Lim, Gradient estimates to the conductivity problem. Math.
Ann. 332 (2005), 277-286.

\bibitem{ackly} H. Ammari; G. Ciraolo; H. Kang; H. Lee; K. Yun, Spectral analysis of the Neumann-Poincar\'{e} operator and characterization of the stress concentration in anti-plane elasticity. Arch. Ration. Mech. Anal.  208  (2013),  275-304.


\bibitem{adkl}  H. Ammari; H. Dassios; H. Kang; M. Lim, Estimates for the electric field in the
presence of adjacent perfectly conducting spheres. Quat. Appl. Math. 65
(2007), 339-355.

\bibitem{aklll}  H. Ammari; H. Kang; H. Lee; J. Lee; M. Lim, Optimal estimates for the electrical
field in two dimensions. J. Math. Pures Appl. 88 (2007), 307-324.


\bibitem{akllz} H. Ammari; H. Kang; H. Lee; M. Lim; H. Zribi, Decomposition theorems and fine estimates for electrical
fields in the presence of closely located circular inclusions. J.
Differential Equations 247 (2009), 2897-2912.


\bibitem{basl} I. Babu\u{s}ka; B. Andersson; P. Smith; K.
Levin, Damage analysis of fiber composites. I. Statistical analysis
on fiber scale. Comput. Methods Appl. Mech. Engrg. 172 (1999), 27-77.


\bibitem{bly1} E. Bao; Y.Y. Li; B. Yin, Gradient estimates for the perfect conductivity problem. Arch. Ration. Mech. Anal. 193 (2009), 195-226.

\bibitem{bly2} E. Bao; Y.Y. Li; B. Yin, Gradient estimates for the perfect and insulated conductivity problems with multiple inclusions. Comm. Partial Differential Equations 35 (2010), 1982-2006.

\bibitem{bjl} J.G. Bao; H.J. Ju; H.G. Li, Optimal boundary gradient estimates for Lam\'{e} systems with partially infinite coefficients. Adv. Math. 314 (2017), 583-629.

\bibitem{bll} J.G. Bao; H.G. Li; Y.Y. Li, Gradient estimates for solutions of the Lam\'{e} system with partially infinite coefficients. Arch. Ration. Mech. Anal.  215  (2015),  no. 1, 307-351.

\bibitem{bll2} J.G. Bao; H.G. Li; Y.Y. Li, Gradient estimates for solutions of the Lam\'{e} system with partially infinite coefficients  in dimensions greater than two. Adv. Math. 305 (2017), 298-338.

\bibitem{bt1} E. Bonnetier; F. Triki, Pointwise bounds on the gradient and the spectrum of the Neumann-Poincar\'{e} operator: the case of 2 discs, Multi-scale and high-contrast PDE: from modeling, to mathematical analysis, to inversion, Contemp. Math., 577, Amer. Math. Soc., Providence, RI, 2012, 81-91.

\bibitem{bt2}E. Bonnetier; F. Triki, On the spectrum of the Poincar\'{e} variational problem for two close-to-touching inclusions in 2D. Arch. Ration. Mech. Anal. 209 (2013),  541-567.

\bibitem{bv} E. Bonnetier; M. Vogelius, An elliptic regularity result for a composite medium
with ``touching'' fibers of circular cross-section. SIAM J. Math. Anal. 31 (2000), 651-677.


\bibitem{bc} B. Budiansky; G.F. Carrier, High shear stresses in stiff fiber composites, J. App. Mech. 51 (1984), 733-735.

\bibitem{dong} H.J. Dong, Gradient estimates for parabolic and elliptic systems from linear laminates. Arch. Rational Mech. Anal. 205 (2012), 119-149.

\bibitem{dongli} H.J. Dong; H.G. Li, Optimal estimates for the conductivity problem by Green's function method. Arch. Rational Mech. Anal.  231 (2019), 1427-1453.

\bibitem{dk} H.J. Dong, S. Kim, On $C^1$, $C^2$, and weak type-(1,1) estimates for linear elliptic operators, Comm. Partial Differential Equations, 42 (2017) , 417-435.

\bibitem{dek} H.J. Dong, L. Escauriaza, S. Kim, On $C^1$, $C^2$, and weak type-(1,1) estimates for linear elliptic operators: Part 2. Math. Ann. 370 (2018), 447-489.

\bibitem{dongzhang} H.J. Dong; H. Zhang, On an elliptic equation arising from composite materials. Arch. Rational Mech. Anal. 222 (2016),  47-89.

\bibitem{gm} M. Giaquinta, L. Martinazzi. An introduction to the regularity theory for elliptic systems, harmonic maps and minimal graphs. Springer Science  Business Media, 2013.



\bibitem{gu} Guti\'{e}rrez, Cristian E. The Monge-Amp{\`e}re equation. Progress in Nonlinear Differential Equations and their Applications, 44. Birkh\"{a}user Boston, Inc., Boston, MA, 2001.

\bibitem{gt} D. Gilbarg, N. S. Trudinger, Elliptic partial differential equations of second order. Springer 1998.

\bibitem{kly0} H. Kang; H. Lee; K. Yun, Optimal estimates and asymptotics for the stress concentration between closely located stiff inclusions, Math. Ann. 363 (2015), 1281-1306.

\bibitem{kly}  H. Kang; M. Lim; K. Yun, Asymptotics and computation of the solution to the conductivity equation in the presence of adjacent inclusions with extreme conductivities. J. Math. Pures Appl. (9) 99 (2013), 234-249.

\bibitem{kly2}  H. Kang; M. Lim; K. Yun,  Characterization of the electric field concentration between two adjacent spherical perfect conductors.  SIAM J. Appl. Math. 74 (2014), 125-146.

\bibitem{ky} H. Kang; S. Yu, Quantitative characterization of stress concentration in the presence of closely spaced hard inclusions in two-dimensional linear elasticity. Arch. Rational Mech. Anal. 232 (2019), 121-196.

\bibitem{ky2} H. Kang, K. Yun. Optimal estimates of the field enhancement in presence of a bow-tie structure of perfectly conducting inclusions in two dimensions. J.
Differential Equations. 266 (2019), 5064-5094

\bibitem{k1} J.B. Keller, Conductivity of a medium containing a dense array of perfectly conducting spheres or cylinders or nonconducting cylinders, J. Appl. Phys., 34 (1963), 991-993.

\bibitem{kmr} V.A. Kozlov, V.G. Maz’ya, J. Rossmann, Elliptic Boundary Value Problems in Domains with Point Singularities, Math. Surveys Monogr., vol. 52, Amer. Math. Soc., Providence, 1997.

\bibitem{lhg} H.G. Li, Lower bounds of gradient's blow-up for the Lam\'{e} system with partially infinite coefficients. arXiv:1811.03453v1.

\bibitem{jlx} H.J. Ju, H.G. Li, L.J. Xu, Estimates for elliptic systems in a narrow region arising from composite materials. Quart. Appl. Math. 77 (2019), 177-199.

\bibitem{llby} H.G. Li; Y.Y. Li; E.S. Bao; B. Yin, Derivative estimates of solutions of elliptic systems in narrow regions. Quart. Appl. Math.  72  (2014), 589-596.

\bibitem{ll-p} H.G. Li, Y.Y. Li, Gradient estimates for parabolic systems from composite material. Sci. China Math. 60 (2017), 2011-2052.

\bibitem{LX} H.G. Li and L.J. Xu, Optimal estimates for the perfect conductivity problem with inclusions close to the boundary.  SIAM J. Math. Anal. 49 (2017), 3125-3142.



\bibitem{ln}  Y.Y. Li; L. Nirenberg, Estimates for elliptic system from composite material. Comm. Pure Appl. Math. 56 (2003), 892-925.


\bibitem{lv} Y.Y. Li; M. Vogelius, Gradient stimates for solutions to divergence form elliptic equations with discontinuous
coefficients. Arch. Rational Mech. Anal. 153 (2000), 91-151.



\bibitem{lyu} M. Lim; S. Yu, Stress concentration for two nearly touching circular holes. arXiv: 1705.10400v1. (2017)

\bibitem{ly}M. Lim; K. Yun, Strong influence of a small fiber on shear stress in fiber-reinforced composites. J.
Differential Equations 250 (2011), 2402-2439.

\bibitem{ly2} M. Lim; K. Yun, Blow-up of electric fields between closely spaced spherical perfect conductors, Comm. Partial Differential Equations, 34 (2009), 1287-1315.

\bibitem{m} X. Markenscoff, Stress amplification in vanishingly small geometries. Computational Mechanics 19 (1996), 77-83.



\bibitem{y1} K. Yun, Estimates for electric fields blown up between closely adjacent conductors with arbitrary shape. SIAM J. Appl. Math. 67 (2007),  714-730.

\bibitem{y2} K. Yun, Optimal bound on high stresses occurring between stiff fibers with arbitrary shaped cross-sections. J. Math. Anal. Appl. 350 (2009), 306-312.

\bibitem{y3} H. Yun, An optimal estimate for electric fields on the shortest line segment between two spherical insulators in three dimensions. J.
Differential Equations 261  (2016), 148-188.

\end{thebibliography}
\end{document}